\documentclass[11pt,reqno,tbtags]{amsart}

\usepackage{amsthm,amssymb,amsfonts,amsmath}
\usepackage{amscd} 
\usepackage[foot]{amsaddr}
\usepackage{mathrsfs}
\usepackage{subfigure, graphicx}
\usepackage{vmargin}
\usepackage{setspace}
\usepackage{paralist}
\usepackage{stmaryrd} 

\usepackage{hyperref}
\usepackage[numbers, sort&compress]{natbib}


\theoremstyle{plain}
\newtheorem{theorem}{Theorem}
\newtheorem{lemma}[theorem]{Lemma}
\newtheorem{proposition}[theorem]{Proposition}
\newtheorem{corollary}[theorem]{Corollary}

\newtheorem{fact}[theorem]{Fact}
\theoremstyle{definition}

\newtheorem{definition}[theorem]{Definition}

\theoremstyle{definition}


\newcommand{\rst}[1]{\ensuremath{{\mathbin\upharpoonright}%
\raise-.5ex\hbox{$#1$}}} 
\newcommand{\Bin}{\mathbf{Bin}}
\newcommand{\Po}{\mathbf{Po}}
\newcommand{\Ber}{\mathbf{Ber}}
\newcommand{\eps}{\varepsilon}
\newcommand{\E}{\mathbb{E}}
\renewcommand{\Pr}{\mathbb{P}}

\providecommand{\cH}{}
\renewcommand{\cH}{\mathcal{H}}

\newcommand{\diam}{\text{diam}}
\newcommand{\dist}{\text{dist}}

\newcommand{\ESL}{E_{\text{SL}}}
\newcommand{\sstar}{\Sigma^\star}


\newcommand{\cT}{\mathcal{T}}

%
\providecommand{\R}{}

\providecommand{\N}{}

\renewcommand{\R}{\mathbb{R}}

\renewcommand{\N}{{\mathbb N}}


 \newcommand{\bag}{\begin{align}}
\newcommand{\bags}{\begin{align*}}
\newcommand{\eag}{\end{align*}}
\newcommand{\eags}{\end{align*}}







\newcommand{\pran}[1]{\left(#1\right)}

\providecommand{\eps}{}
\renewcommand{\eps}{\epsilon}
\providecommand{\ora}[1]{}
\renewcommand{\ora}[1]{\overrightarrow{#1}}

\newcommand{\eqdist}{\ensuremath{\stackrel{\mathrm{d}}{=}}}

\hypersetup{
    bookmarks=true,         
    unicode=false,          
    pdftoolbar=true,        
    pdfmenubar=true,        
    pdffitwindow=true,      
    pdftitle={My title},    
    pdfauthor={Author},     
    pdfsubject={Subject},   
    pdfnewwindow=true,      
    pdfkeywords={keywords}, 
    colorlinks=true,       
    linkcolor=blue,          
    citecolor=blue,        
    filecolor=blue,      
    urlcolor=blue           
}

\numberwithin{equation}{section}
\numberwithin{theorem}{section}

\newcommand{\psub}[2]{{\mathbf P}_{#1}\left(#2\right)}

\newcommand{\Esub}[2]{{\mathbf E_{#1}}\left(#2\right)}
\newcommand{\esub}[1]{{\mathbf E_{#1}}}

\usepackage[textsize=footnotesize,color=green!40]{todonotes}


\begin{document}

\title{Diameter and Stationary Distribution of Random $r$-out Digraphs}
\author{Louigi Addario-Berry$^*$}
\address{$^*$Department of Mathematics and Statistics, McGill University, Montreal, Canada}
\author{Borja Balle$^\dagger$ \and Guillem Perarnau$^\dagger$}
\address{$^\dagger$School of Computer Science, McGill University, Montreal, Canada}
\email{louigi.addario@mcgill.ca}
\email{bballe@cs.mcgill.ca}
\email{p.melliug@gmail.com}

\date{\today}

\begin{abstract}
Let $D(n,r)$ be a random $r$-out regular directed multigraph on the set of vertices $\{1,\ldots,n\}$. In this work,
we establish that for every $r \ge 2$, there exists $\eta_r>0$ such that $\diam(D(n,r))=(1+\eta_r+o(1))\log_r{n}$.
Our techniques also allow us to bound some extremal quantities related to the stationary distribution of a simple random walk on $D(n,r)$.
In particular, we determine the asymptotic behaviour of $\pi_{\max}$ and $\pi_{\min}$, the maximum and the minimum values of the stationary distribution.
We show that with high probability $\pi_{\max} = n^{-1+o(1)}$ and $\pi_{\min}=n^{-(1+\eta_r)+o(1)}$. Our proof shows that the vertices with $\pi(v)$ near to $\pi_{\min}$ lie
at the top of ``narrow, slippery towers''; such vertices are also responsible for increasing the diameter from $(1+o(1))\log_r n$ to $(1+\eta_r+o(1))\log_r{n}$.
\end{abstract}

\maketitle


\section{Introduction}\label{sec:intro}

Call a random directed graph $D$ with vertices $V(D)=\{v_1,\ldots,v_n\}$ a {\em random $r$-out digraph}
if each vertex in $V(D)$ has out-degree $r$, and the $nr$ heads of edges in $E(D)$ are iid and uniformly distributed over $V(D)$.
We allow digraphs to have multiple edges and loops.
It is useful to have a canonical construction: for each pair $(i,j) \in [n]\times [r]$, let $L_{i,j}$ be a uniformly random element of $[n]$, and write $D(n,d)$ for the random $r$-out digraph with vertex set $[n]=\{1,\ldots,n\}$ and edge set $\{(i,L_{i,j}): (i,j) \in [n] \times [r]\}$.

Given a digraph $D$, for $u,v \in V(D)$ we write $\dist(u,v)=\dist_D(u,v)$ for the number of edges in a shortest oriented path from $u$ to $v$, or set $\dist(u,v)=\infty$ if there exists no such path. The \emph{diameter} of $D$ is
\begin{equation}\label{eq:diam_def}
\diam(D) = \max\{\dist(u,v): u,v \in [n], \dist(u,v) < \infty\}.
\end{equation}

Say $D$ is {\em strongly connected} if $\dist(u,v) < \infty$ for all $u,v \in V(D)$.
An induced subgraph $D[S]$ of $D$ is a {\em strongly connected component} of $D$ if $D[S]$ is strongly connected but for all $S'$ with $S \subset S'$, $D[S']$ is not strongly connected.
Given $S \subset V(D)$, say that $D[S]$ is {\em attractive} if for all $v \in V(D)$ there is a directed path from $v$ to $S$. It is easily seen that a digraph can contain at most one attractive strongly connected component $D[S]$.

If $D$ is strongly connected then a simple random walk on $D$ has a unique stationary distribution $\pi=\pi_D$; in this case we write $\pi_{\max}(D)=\max\{\pi_D(v):\; v\in V(D)\}$ and $\pi_{\min}(D)=\min\{\pi_D(v):\; v\in V(D)\}$, respectively.
The diameter, and the values $\pi_{\max}$ and $\pi_{\min}$, are natural extremal parameters associated with a digraph.
In order to study cases where $D$ is not necessarily strongly connected, we write $D_0=D_0(n,r)$ for the strongly connected component of $D(n,r)$ with the largest number of vertices (if there is more than one such component, $D_0$ is the one whose smallest labelled vertex is minimal).

Let $\lambda_r=\max\{\lambda:1-\lambda=e^{-r\lambda}\}$, and let
\begin{equation}
\eta_r = \frac{1}{\log_r(1-\lambda_r)^{-1}-1} = \frac{\log r}{\lambda_r r - \log r} \enspace.
\end{equation}
Observe that $\lambda_r\to 1$ and $\eta_r\to 0$ when $r\to \infty$.
A sequence of random variables $X_n$ converges to $X$ {\em in probability} if
for every $\eps > 0$, $\Pr(|X_n-X| > \eps) \to 0$ as $n \to \infty$. If $X_n/Y_n
\to X$ in probability then we also write $X_n = (X+o_p(1))Y_n$.

The study of $D(n,r)$ was initiated by Grusho, who showed that the size of its largest strongly connected component satisfies $|V(D_0(n,r))|=(1+o_p(1))\lambda_r \cdot n$~\cite{grusho1973limit}~(we remark that its size is also the asymptotic size of the giant component in the Erd{\H o}s-R\'enyi random graph $G(n,r/n)$). Because $\lambda_r > 1/2$ for all $r \ge 2$, it follows that $D_0(n,r)$ is with high probability\footnote{Here and for the remainder of the paper, {\em with high probability}, or whp, means with probability tending to $1$ as $n \to \infty$.} the unique strongly connected component of its size. Motivated by the average-case analysis of algorithms for the minimization of Deterministic Finite Automata (DFA), Grusho's result has been recently rediscovered by different sets of authors~\cite{aryeh,cai2015graph,carayol2012distribution}.

The diameter of $D(n,r)$ was first studied by Trakhtenbrot and Barzdin in~\cite[Theorem 5.5]{trakbarz}, who showed that for every $r\geq 2$ there exists a constant $C_r\geq 1$ such that with high probability $\diam(D)\leq C_r \log_r{n}$. Since $D(n,r)$ is $r$-out regular we always have the trivial lower bound $\diam(D)\geq \lceil\log_r{(n-1)}\rceil$.

In \cite{grusho1973limit}, Grusho also showed that the unique largest strongly connected component $D_0$ is attractive with high probability. More recently, Balle~\cite{ballearxiv} showed that $D_0(n,r)$ is whp aperiodic, and so $D_0$ is ergodic. It follows that whp, the law of the position of a particle performing a simple random walk on $D(n,r)$ converges to $\pi_{D_0(n,r)}$.

The contribution of this paper is to determine the first order asymptotic behaviour of $\diam(D(n,r))$, $\pi_{\max}(D_0(n,r))$ and $\pi_{\min}(D_0(n,r))$, as $n$ becomes large.
\begin{theorem}\label{thm:main}
For every $r \ge 2$,  we have $\diam(D(n,r)) = (1 + \eta_r + o_p(1)) \log_r n$ and
$\diam(D_0(n,r)) = (1 + \eta_r + o_p(1)) \log_r n$.

\end{theorem}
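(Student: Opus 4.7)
The plan is to decompose the analysis of both $\diam(D(n,r))$ and $\diam(D_0(n,r))$ in terms of the reverse-BFS depth from each vertex in $D_0$. Two preliminary observations reduce both diameters to the same quantity. First, since $D_0$ is the unique maximal SCC and is attractive, any $u\in D_0$ can reach only vertices of $D_0$ (else the reached vertex would be in a larger SCC), so $R^+(D_0)=D_0$; in particular all finite distances from a $u\in D_0$ are already within $D_0$. Second, for $v\notin D_0$ all ancestors of $v$ lie in the subcritical subdigraph induced on $V\setminus D_0$ (internal mean out-degree $r(1-\lambda_r)<1$ for every $r\ge 2$), whose maximum ancestral depth is at most $\eta_r\log_r n$ whp, strictly less than $(1+\eta_r)\log_r n$. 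Thus it suffices to analyse $\Delta(v)=\max_u\dist(u,v)$ for $v\in D_0$ and show $\max_{v\in D_0}\Delta(v)=(1+\eta_r+o_p(1))\log_r n$.

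\textbf{Structure and upper bound.} For $v\in D_0$, the reverse BFS in $D$ is, up to subpolynomial sizes, a Poisson($r$) branching process conditioned on survival, and its backbone (vertices with infinite ancestry in the BP) is a BP with offspring distribution Poisson($r\lambda_r$) conditioned on being $\ge 1$. A direct computation gives $\Pr(\text{backbone offspring}=1)=rq$ with $q=1-\lambda_r$, while the mean of the conditioned Poisson equals exactly $r$, so once the initial narrow phase ends the backbone grows at rate $r$. Define $L(v)$ to be the first generation at which the reverse-BFS frontier from $v$ reaches size $(\log n)^{10}$. A standard sprinkling argument sandwiches $L(v)\le\Delta(v)\le L(v)+\log_r n+O(\log\log n)$. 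The narrow phase $L(v)\ge k$ requires that the backbone is essentially a single path for $k$ generations, an event of probability at most $C(rq)^k\polylog(n)$, yielding
\[
\E\bigl[\#\{v\in D_0:L(v)\ge k\}\bigr]\le nC(rq)^k\polylog(n),
\]
which is $o(1)$ for $k=(\eta_r+\eps)\log_r n$ by the calibration $\log(1/(rq))=r\lambda_r-\log r=\log r/\eta_r$.

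\textbf{Lower bound.} I would apply the second moment method to $X_k=\#\{v\in D_0:L(v)\ge k\}$ at $k=(\eta_r-\eps)\log_r n$. The first moment is $n^{\Omega(\eps)}\to\infty$, and since $\{L(v)\ge k\}$ is measurable with respect to a $\polylog(n)$-sized ancestral neighborhood of $v$, distinct such neighborhoods rarely intersect, so $\E[X_k^2]=(1+o(1))\E[X_k]^2$. Any witness $v^*$ then satisfies $\Delta(v^*)\ge L(v^*)+\log_r n-O(\log\log n)\ge(1+\eta_r-O(\eps))\log_r n$, where the $\log_r n$ is forced because after the narrow phase the BFS grows at rate at most $r$ and must reach all $n$ vertices.

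\textbf{Main obstacle.} The key technical difficulty is justifying the branching-process approximation for the reverse BFS \emph{inside $D_0$}: membership in $D_0$ depends on the whole graph, so one cannot simply sample the backbone process step-by-step. I would perform the reverse BFS in the full digraph $D$ (which is a bona fide Poisson($r$) BP up to subpolynomial sizes by a local coupling) while certifying each discovered vertex to belong to $D_0$ via a short forward exploration, declaring a vertex $D_0$-certified once its forward cluster exceeds, say, $n^{1/4}$. Coordinating this certification with the $\polylog(n)$ truncation needed for the second moment estimate, and bounding the symmetric difference with $D_0$ by $o(n)$, is the heart of the argument.
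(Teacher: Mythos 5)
Your high-level template---a narrow in-neighbourhood phase of length $\approx\eta_r\log_r n$ governed by the subcritical dual parameter $r(1-\lambda_r)$, an expansion phase of length $\approx\log_r n$, a first-moment union bound for the upper bound, and a second-moment argument for the lower bound---matches the paper's structure, and your backbone calibration (offspring $=1$ with probability $r(1-\lambda_r)$, conditional mean $r$, and $\log(1/(r(1-\lambda_r)))=\log r/\eta_r$) is correct; it parallels the paper's Lemma~\ref{lem:riordan} together with the identity $1+\log_r(1-\lambda_r)=-\eta_r^{-1}$ that appears in the proofs of Lemma~\ref{lem:inverse} and Lemma~\ref{lem:exit_towers}.

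The genuine gap is exactly the one you flag, and your proposed remedy does not fix it. Defining the statistic $X_k=\#\{v\in D_0:L(v)\ge k\}$ makes every summand depend on the global event $\{v\in D_0\}$, so the ``local neighbourhoods rarely intersect'' heuristic is not available for $\E[X_k^2]$. Worse, a short \emph{forward} exploration certifies only that $v$'s out-component is large, but since $D_0$ is attractive \emph{every} vertex's out-component meets $D_0$, so forward certification is essentially vacuous and does not witness $v\in D_0$. What is actually needed is that $v$'s \emph{in}-component is large---i.e., that $v$ is reachable from $D_0$---which is precisely the thing being partially revealed by the reverse BFS you want to couple to a branching process. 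The paper resolves this circularity by defining $\eps$-flags through a purely local condition on the reverse exploration (a narrow but eventually widening, $\log^7 n$-sized, tree-shaped in-neighbourhood), running the second-moment method on the flag count (Lemma~\ref{lem:exit_towers}, Lemma~\ref{lem:approx_ind}, Corollaries~\ref{cor:prod_form} and~\ref{lem:many_towers}), and then separately proving in Lemma~\ref{lem:prop_flag} that whp every flag is reachable from every vertex---hence lies in $D_0$---by re-using the \emph{same} out-neighbourhood expansion estimate (Proposition~\ref{prop:direct}) that drives the upper bound. Finally, note that the ``standard sprinkling argument'' for $\Delta(v)\le L(v)+\log_r n+O(\log\log n)$ is not routine here: it is the content of Proposition~\ref{prop:direct} and the out-neighbourhood lemmas of Section~\ref{sec:out_tech}, and for $r=2$ it forces the paper to separately rule out the ``loop vertex'' event $\ESL$, whose probability $\Theta(n^{1-r})$ is too large for a naive union bound over pairs.
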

\begin{theorem}\label{thm:dist}
For every $r \ge 2$, we have $\pi_{\max}(D_0(n,r)) = n^{-1 + o_p(1)}$ and
$\pi_{\min}(D_0(n,r)) = n^{-(1 + \eta_r) + o_p(1)}$.
\end{theorem}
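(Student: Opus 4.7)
The theorem has four parts: upper and lower bounds on $\pi_{\max}$ and on $\pi_{\min}$. The bound $\pi_{\max}(D_0) \ge |V(D_0)|^{-1} = n^{-1+o_p(1)}$ is immediate, since $\sum_v \pi(v) = 1$ and $|V(D_0)| = (1+o_p(1))\lambda_r n$ by Grusho's theorem. The other three bounds all exploit the stationary identity
\[
\pi(v) \;=\; \frac{1}{r} \sum_{u} \pi(u),
\]
where the sum runs over in-neighbours $u$ of $v$ lying in $D_0$. Note that every in-neighbour in $D_0$ also lies in $D_0$, because $D_0$ is a strongly connected component and $D_0$ is attractive, so the identity really is a local relation inside $D_0$.

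For the upper bound on $\pi_{\max}$, I use that $\pi = \pi P^k$ for every $k$, so $\pi(v)$ is a convex combination of the entries of $P^k(\cdot,v)$ and hence $\pi(v) \le \max_u P^k(u,v)$. Taking $k$ slightly above $\log_r n$, the aim is to show
\[
P^k(u,v) \;=\; \frac{N_k(u,v)}{r^k} \;\le\; n^{-1+o_p(1)}
\]
uniformly in $u,v \in V(D_0)$, where $N_k(u,v)$ counts length-$k$ walks from $u$ to $v$ in $D$. This is a walk-spreading estimate: on the scale $k = O(\log n)$ the out-BFS ball of any $u$ in a random $r$-out digraph is close to a tree, so the $r^k$ walks from $u$ distribute approximately uniformly over the $\Theta(n)$ endpoints reached. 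The ingredients are essentially the path-counting bounds used for the diameter upper bound in \refT{thm:main}.

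For the upper bound on $\pi_{\min}$, I exhibit a vertex $v_0 \in V(D_0)$ at the top of a narrow in-tower of length $\ell = \eta_r \log_r n - o(\log n)$: a sequence $v_0, v_1, \ldots, v_\ell$ in $V(D_0)$ with the property that the unique in-neighbour of $v_i$ inside $D_0$ is $v_{i+1}$, for each $0 \le i < \ell$. Iterating the stationary identity along the tower collapses the sum to a single term at each step, giving
\[
\pi(v_0) \;=\; r^{-\ell}\,\pi(v_\ell) \;\le\; r^{-\ell}\,\pi_{\max} \;=\; n^{-(1+\eta_r)+o_p(1)},
\]
using the upper bound on $\pi_{\max}$ just established. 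Existence of such a tower is a first-moment calculation paralleling the diameter lower bound in \refT{thm:main}; the tower responsible for boosting $\diam(D(n,r))$ from $\log_r n$ up to $(1+\eta_r+o(1))\log_r n$ is precisely the one required here.

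The principal obstacle is the matching lower bound $\pi_{\min} \ge n^{-(1+\eta_r)-o_p(1)}$. The plan is a structural converse to the preceding step. Given any $v_0 \in V(D_0)$, define $v_{i+1}$ recursively to be an in-neighbour of $v_i$ inside $D_0$ that maximises $\pi$; the stationary identity forces $\pi(v_{i+1}) \le r\,\pi(v_i)$, with equality iff $v_i$ has $D_0$-in-degree exactly one (a narrow step). Continuing until $\pi(v_\ell) \ge \pi_{\max}/2$ yields
\[
\pi(v_0) \;\ge\; r^{-\ell}\,\pi(v_\ell) \;\ge\; r^{-\ell}\,n^{-1-o_p(1)}.
\]
To conclude $\pi(v_0) \ge n^{-(1+\eta_r)-o_p(1)}$ one therefore needs $\ell \le (1+o(1))\eta_r\log_r n$, which reduces to ruling out narrow in-towers in $D_0$ of length appreciably beyond $\eta_r\log_r n$ - a statement in the same spirit as the diameter upper bound from \refT{thm:main}. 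The delicate point is to handle chains which are not pure towers but branch occasionally: each branching step costs a multiplicative factor strictly less than $r$ in the $\pi$-growth, and one must quantify this loss and combine it with a union bound over the possible branching patterns to close the gap. This bookkeeping - identifying the correct structural witness for any would-be vertex with $\pi$ smaller than $n^{-(1+\eta_r)-\eps}$, and then excluding such witnesses by a first-moment argument - is the technically most demanding step.
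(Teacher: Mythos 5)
Your overall decomposition into four bounds is correct, and the trivial lower bound on $\pi_{\max}$ is fine. However, there are genuine gaps in three of the four parts, and in two of them the paper uses a substantially different (and simpler) idea.

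For the upper bound on $\pi_{\max}$, you propose to bound $\pi(v)\le\max_u P^k(u,v)$ and then show $P^k(u,v)=N_k(u,v)/r^k\le n^{-1+o_p(1)}$ uniformly for $k$ slightly above $\log_r n$. This is essentially an $\ell_\infty$ mixing estimate, and it is not implied by the path-\emph{existence} bounds behind the diameter result: knowing $\dist(u,v)\le k$ whp says $N_k(u,v)>0$, not that $N_k(u,v)\le n^{o(1)}\cdot r^k/n$. In fact the out-ball ceases to be tree-like well before depth $\log_r n$, so controlling the number of walk collisions near $v$ (and hence the walk-count $N_k(u,v)$) is a separate and delicate problem, in the same spirit as bounding the mixing time of a non-reversible chain. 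The paper avoids this entirely by bounding the expected return time: it shows (Lemma~\ref{lem:bound_dist}) that if the in-ball $D[N^-_{\le\ell^*}(v)]$ is an ``$h$-hard maze'' then $\pi(v)^{-1}=\Esub{v}{\tau_v^+}\ge r^h$, and then (Proposition~\ref{prop:hard}) that whp every in-ball is $h$-hard with $h=(1-2\delta)\log_r n$. That is a purely local argument about the structure of $D[N^-_{\le\ell^*}(v)]$ and sidesteps global walk distributions.

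For the lower bound on $\pi_{\min}$, your trace-back construction (repeatedly moving to the in-neighbour of maximal $\pi$) only yields the one-sided bound $\pi(v_{i+1})\le r\pi(v_i)$, and you have no control on how slowly the chain increases --- it may cycle or stagnate --- so there is nothing to guarantee it reaches a vertex of $\pi$-value near $\pi_{\max}$ within $(1+o(1))\eta_r\log_r n$ steps. You flag this yourself as the ``technically most demanding step,'' and indeed it is not resolved, nor does it reduce cleanly to ruling out narrow towers (the chain follows the \emph{max-}$\pi$ in-neighbour, not a degree-one in-neighbour). The paper instead observes (Lemma~\ref{lem:diam}) the clean deterministic inequality $\pi_{\min}\ge 1/(1+d\,r^d)$ for any $r$-out regular ergodic digraph of diameter $d$, obtained by iterating the stationary equation along \emph{all} in-paths and averaging over path lengths; plugging in the diameter upper bound from Theorem~\ref{thm:main} gives $\pi_{\min}\ge n^{-(1+\eta_r)-o_p(1)}$ immediately, with no new probabilistic work.

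For the upper bound on $\pi_{\min}$, the iterate-the-stationary-identity idea is right, but the precise structural witness differs. You ask for a length-$\ell$ path $v_0,\dots,v_\ell$ with each $v_i$ having \emph{unique} $D_0$-in-neighbour $v_{i+1}$. This both conditions on the global object $D_0$ (which the paper carefully avoids when doing first/second-moment computations) and is unnecessarily rigid; the paper instead uses $\eps$-flags: vertices $v$ whose in-ball $D[N^-_{\le k_1}(v)]$ is a tree of bounded size with $k_1\ge k^*$. For a flag, each $u\in N^-_{k^*}(v)$ contributes exactly one length-$k^*$ path to $v$, so $\pi(v)\le |N^-_{k^*}(v)|\,r^{-k^*}\pi_{\max}\le\log^7 n\cdot n^{-(\eta_r-\eps/2)}\cdot n^{-(1-\eps/6)}$. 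The flag condition is purely local, so its probability is estimated by comparison to a Poisson$(r)$ Galton--Watson tree (Lemmas~\ref{lem:tree_like},~\ref{lem:riordan}), existence is shown by a second-moment argument (Lemma~\ref{lem:exit_towers}, Corollary~\ref{cor:prod_form}), and $F(\eps)\subset V(D_0)$ is established separately (Lemma~\ref{lem:prop_flag}). Your degree-one tower would need an analogous probability computation; it is likely doable via the reduced Galton--Watson tree (whose offspring-one probability is $r(1-\lambda_r)$, matching the flag exponent), but conditioning on $D_0$-membership along the whole path introduces dependencies the paper's local flag definition is designed to avoid.
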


\noindent{\bf Remarks.} 
\begin{itemize}
\item The results obtained can be easily transferred to random \emph{simple} $r$-out digraphs. Let $D_{\mathrm{sim}}(n,r)$ be chosen uniformly at random from the set of directed simple  graphs (no loops or multiple edges) with vertex set $[n]$ such that each vertex has out-degree $r$. The conditional distribution of $D(n,r)$, given that it is simple, is precisely that of $D_{\mathrm{sim}}(n,r)$.
Furthermore, it is not hard to show (see~\cite{mckay1984asymptotics}) that
\begin{equation*}
\Pr(D(n,r)\text{ is simple})= e^{-\Theta(r^2)}\;.
\end{equation*}
In particular, this probability is bounded away from zero for fixed $r$, so any
property that holds whp for $D(n,r)$ also holds whp for
$D_{\mathrm{sim}}(n,r)$.

\item It is not hard to deduce from our arguments that for all $u,v \in V(D(n,r))$, conditional on the event that $v \in D_0(n,r)$, we have $\dist_{D(n,r)}(u,v)=(1+o_p(1))\log_r n$. This shows that the \emph{typical distance} in $D(n,r)$ is $(1+o_p(1))\log_r n$. The argument, in brief, is as follows. First, the lower bound is easy by symmetry since for all $u \in V(D(n,r))$ we have $|N^+_{\le d}(u)| \le \sum_{i=0}^d r^i < r^{d+1}$. For the lower bound, Proposition~\ref{prop:direct} tells us that if $N_k^-(v) \ge \log^4 n$ then with high probability $\dist(u,v) \le k+\log_r n$. By Lemma~\ref{lem:riordan} and Proposition~\ref{lem:towers}, with $k = (\log \log n)^2$, it follows straightforwardly that $\Pr(N_k^-(v) <\log^4 n\mid v \in D_0(n,r)) = o(1)$, and the result follows. We leave the details to the interested reader. 

\item The random $r$-out, $s$-in digraph $D(n,r,s)$ is defined similarly to $D(n,r)$, but each vertex chooses $s$ in-neighbours as well as $r$ out-neighbours, all independently and uniformly at random; see \cite{fennerfrieze}. In particular, $D(n,r) \eqdist D(n,r,0)$\footnote{For any two random variables $X$ and $Y$, we use the notation $X\eqdist Y$ to denote that the corresponding probability distributions are equal.}. It may be interesting to consider the diameter and the stationary distribution for $D(n,r,s)$ when $s \ne 0$. One case follows from Theorem~\ref{thm:main}: since the diameter of a digraph is the same as the diameter of the digraph obtained by flipping the direction of all the edges, $\diam(D(n,0,r)) \eqdist \diam(D(n,r,0))$. In contrast, studying the stationary distribution of $D(n,0,r)$ seems less interesting: typically there will be many vertices with no out-edges where a simple random walk will eventually become stuck.
\end{itemize}

\textbf{Outline.} The paper is organized as follows. We start in Section~\ref{sec:related} by discussing our motivation for addressing these problems and by putting our results in the context of other models for random (di)graphs. In Section~\ref{sec:preliminaries} we introduce the notation that will be used throughout the paper and state some basic concentration inequalities and facts about branching processes. In Section~\ref{sec:upper_proof} we finish the proof of the upper bound on the diameter of $D(n,r)$ (Theorem~\ref{thm:main}) assuming some technical estimates. The breadth-first search procedure that will be used to explore the graph is described in Section~\ref{sec:bfs_and_cond}. In Section~\ref{sec:lemmas} we study the behaviour of the in-neighbourhoods of $D(n,r)$ by comparing them with Poisson Galton-Watson trees, while in Section~\ref{sec:out_tech} we study its out-neighbourhoods. In Section~\ref{sec:upper}, we prove the technical estimates, completing the proof of the upper bound given in Section~\ref{sec:upper_proof}. The proof of the lower bound on the diameter of $D(n,r)$ (Theorem~\ref{thm:main}) occupies Section~\ref{sec:lower}. We conclude the paper by proving Theorem~\ref{thm:dist} in Section~\ref{sec:stat}.


\section{Motivation and Related Work}\label{sec:related}

One of our main motivations for the study of $D(n,r)$ comes from the analysis of random deterministic finite automata (DFAs).
In this section we describe the particular problem that leads us to study
the diameter and stationary distribution of these objects.

\subsection{Learning Random Deterministic Finite Automata}

A \emph{deterministic finite automaton} (DFA) over an alphabet $\Sigma=\{\sigma_1,\ldots,\sigma_r\}$ is given by a set $V=\{v_1,\ldots,v_n\}$ and a function $L:[n] \times [r] \to [n]$. We think of the pair $(V,L)$ as specifying a
directed multigraph $D$ with vertices $V$ and edges $\{(v_i,L(i,j)): i \in [n], j \in [r]\}$; every vertex of $D$ has out-degree $r$, and the $r$ edges leaving a vertex $v$ are labeled with distinct symbols from $\Sigma$.
In addition, a DFA is equipped with a distinguished vertex $s$ called the
\emph{initial} state, and with a binary labelling $B:V(D) \to \{0,1\}$; the vertices in $B^{-1}(\{1\})$ are the
{\em accepting} states of the DFA. The DFA is formally given by the tuple $Q=(V,\Sigma,L,s,B)$.


Let $\sstar$ denote the set of all finite strings with symbols in $\Sigma$.
Words $w = w_1w_2 \dots w_t \in \sstar$ correspond to walks $x_0(w),x_1(w),\dots,x_t(w)$ on $V$:
$x_0=s$ and, for $1 \le i \le t$, $x_i$ is reached from $x_{i-1}$ by following the edge with label $w_i$.
We write $Q(w)=x_t(w)$ for the final state of the walk. The DFA {\em accepts} the word $w$ if $B(Q(w))=1$.
The set $L(Q) = \{w \in \sstar: B(Q(w))=1\}$ is the {\em language} recognized by the DFA.
The set of languages recognized by some DFA are precisely the \emph{regular languages}.

To see the connection with random out-regular graphs, observe that
we may build a uniformly random DFA with $n$ labelled states and alphabet of size $r$ as follows. Let
$D(n,r)$ be as in the first paragraph of the paper, using the
random variables $(L_{i,j}: (i,j) \in [n]\times [r])$. Then for $(i,j) \in [n]\times[r]$, let $L(i,j)=L_{i,j})$;
equivalently, assign label $\sigma_i$ to edge $(i,L_{i,j})$.
Choose the starting state $s$ uniformly at random from $[n]$, and choose
$B$ uniformly at random from the set of functions $f:[n] \to \{0,1\}$.

DFAs and regular languages play a crucial role in language theory and there is a
vast literature on algorithms over DFAs, ranging from minimization and equivalence
testing, to synthesis, learning and composition.

Learning regular languages from different sources of information is a prominent
problem in computational learning theory \cite{kearnsbook}, which is most often
studied within the context of so-called grammatical inference problems
\cite{colinbook}.
A prominent problem in this area concerns the possibility of learning regular
languages under the \emph{probably approximately correct (PAC)} learning model
introduced by Valiant \cite{valiantpac}.
Roughly speaking, this asks for an efficient
algorithm such that, when supplied with a large enough sample containing iid
strings drawn from some arbitrary probability distribution $\mu$ on $\sstar$ and
labels indicating whether each string belongs to some hidden regular language,
the algorithm outputs a representation of a regular language (e.g.\ a DFA) which
is close to the hidden regular language in a sense that depends on the
distribution which generated the sample strings.
Several results from the 90's indicate that, in its full generality, PAC
learning of DFAs is hard due to complexity-theoretic as well as cryptographic reasons
\cite{kearnsvaliant,pittwarmuth} (see also the recent strengthened result
\cite{chalermsook2014pre}).
A natural question to ask in such a scenario is whether there exists a
reasonable simplification of the problem for which a positive answer is
possible.
This requires one to come up with scenarios that rule out the worst-case
problems arising from specially crafted regular languages and distributions
over examples appearing in the proofs of the aforementioned lower bounds.

%

One possibility is to study the average case. This approach can be formalized by
considering regular languages defined by random DFAs.
In particular, one can ask for an algorithm that with high probability (as the
number of the states in the DFA goes to infinity) can learn the regular language
recognized by a random DFA.
There exists evidence suggesting that such relaxation might not be
enough to achieve efficient learning in general: it was recently showed by
Angluin et al.\ that generic instances of DFA (as well as decision trees and DNF
formulas) are hard to learn from statistical queries when examples can be
sampled from an arbitrary distribution \cite{angluinlower}.
Nevertheless, prior to Angluin et al.'s result it was showed that generic
decision trees and generic DNF formulas can be efficiently learned when samples
are drawn according to the uniform distribution~\cite{randomdt,randomdnf}.

In view of the panorama described in the previous paragraphs, a natural question
to ask is whether random DFAs can be efficiently learned when sample strings are
drawn from the uniform distribution.
More precisely, one would like to answer the following sorts of questions. Fix a uniformly random DFA $Q$ with states $[n]$ and alphabet $[r]$. Then fix $m \in \N$ and let $(\mathbf{x}_i,i \ge 1)$ be iid words sampled uniformly at random from $[r]^m$.
\begin{enumerate}
\item Given the sequences $(\mathbf{x}_i,i \ge 1)$ and $(B(Q(\mathbf{x}_i)),i \ge 1)$, is it possible to construct a DFA
$\hat{Q}$ that recognizes the same language as $Q$ with high probability?
\item Given the sequences $(\mathbf{x}_i,i \ge 1)$, $(Q(\mathbf{x}_i)),i \ge 1)$ and $(B(Q(\mathbf{x}_i)),i \ge 1)$, is it possible to construct a DFA $\hat{Q}$ that recognizes the same language as $Q$ with high probability?
\end{enumerate}
In both cases, if the answer is yes then it is natural to ask for {\em efficient} algorithms (average case running time polynomial in $n$, $m$, $r$, and any other parameters involved). The questions can be weakened by only requiring that $\hat{Q}$ recognizes the same set of words {\em of length m}. A further weakening is to only require that $\Pr(\hat{Q}(\mathbf{y})=Q(\mathbf{y})) > 1-\eps$ when $\mathbf{y}$ is uniformly distributed over $[r]^m$.

The results in~\cite{angluincomm} establish that in order to answer the second question, it would be sufficient to understand several specific properties of a random walk on a randomly generated DFA.
When a string is sampled from the uniform distribution over $[r]^m$ and is labeled according to the state that it reaches, the label immediately corresponds to the final state of a simple random walk of length $m$ over the DFA starting from the initial state.
Thus, the analysis of the algorithm in \cite{angluincomm} relies on bounds on the diameter, stationary distribution, and mixing time on random $r$-out regular digraphs.
Similar ideas are what led us to the study of the problems discussed in the present paper.

Several other properties of random DFAs have been studied, both in learning theory and in other contexts, using the $D(n,r)$ model.
For example, first Korshunov's group, and later Nicaud's group, have studied the probability that random DFA exhibit particular structures, mainly motivated by the analysis of sample and reject algorithms for enumeration of subclasses
of automata (see \cite{DBLP:conf/mfcs/Nicaud14} and references therein).
Motivated by worst-case hardness results for learning a DFA, Angluin and co-authors have used properties of random DFAs to study the problem of learning a generic DFA \cite{angluin2009learning,angluinlower}.
The average-case complexity of DFA minimization algorithms has also received some attention recently \cite{bassino2012average,de2013brzozowski}.
Finally, a series of results have led to a solution of the long-standing \v{C}ern{\'y} conjecture about synchronization of finite automata in the case of random DFA~\cite{DBLP:journals/corr/abs-1304-5774,DBLP:journals/corr/Nicaud14,skvortsov2010synchronizing}.

\subsection{Diameter and stationary distribution of other random graph models}\label{sub:diam_models}

In this subsection we describe some previous results on the diameter and the stationary distribution of certain random graph models and relate them to Theorem~\ref{thm:main} and to Theorem~\ref{thm:dist}. This provides an intuition for the results we have obtained on the diameter of $D(n,r)$. We consider the following models of random (di)graphs.
\begin{itemize}
\item For $p \in [0,1)$, $G(n,p)$ is the random graph with vertex set $[n]$ in which every edge is included independently with probability $p$.
\item For $d \in \N$, $G(n,d)$ is the random $d$-regular simple graph with vertex set $[n]$ chosen uniformly at random among all such graphs.
\item  For $p \in [0,1)$, $D(n,p)$ is the random digraph with vertex set $[n]$ in which  every oriented edge is included independently with probability $p$.
\end{itemize}
For an undirected graph $G=(V,E)$ and $u,v \in V$ we write $\dist_G(u,v)$ for the minimum number of edges in a path from $u$ to $v$,
or set $\dist_G(u,v)=\infty$ if there exists no such path. The diameter of $G$ is then defined just as in~\eqref{eq:diam_def}. Bollob\'as and Fernandez de la Vega~\cite{bollobas1982diameter} studied the diameter of $G(n,d)$ and showed that for every integer $r \ge 2$, we have
\begin{align}\label{eq:diam_RRG}
\diam(G(n,r+1))= (1+o_p(1))\log_r{n}\;.
\end{align}
The diameter of $G(n,p)$ was recently studied by Riordan and Wormald~\cite{riordan}, who showed that for every constant $r>0$, we have
\begin{align}\label{eq:diam_RG}
\diam(G(n,(r+1)/n))=(1+2\eta_r+o_p(1))\log_r{n}\;.
\end{align}
In fact, they proved a stronger result, showing convergence in distribution of the diameter after appropriate recentering and rescaling. The extra term $2\eta_r$ is essentially due to the existence of ``remote'' vertices in the giant component of $G(n,(r+1)/n)$, whose neighbourhoods are exceptionally small up to distance about $\eta_r \log_r n$.

Our result on the diameter of $D(n,r)$ from Theorem~\ref{thm:main} can be related to~\eqref{eq:diam_RRG} and~\eqref{eq:diam_RG} in the following way. Given $u,v \in [n]$, one way to determine $\dist_{D(n,r)}(u,v)$ is to perform an {\em outward}  breadth-first search (BFS) starting at $u$, to perform an {\em inward} BFS (i.e.\ following edges from head to tail) starting at $v$, and to stop at the first time the two searches uncover a common vertex. (See Section~\ref{sub:bfs} for a careful definition of breadth-first search.) This technique was used by Bollob\'as and Fernandez de la Vega in~\cite{bollobas1982diameter}. Since the BFS explores vertices in order of distance, such a procedure is guaranteed to build a shortest path from $u$ to $v$.

On the one hand, in the outward BFS of $D(n,r)$ starting from $u$, every vertex has exactly $r$ out-edges when explored. Similarly, in a BFS exploration of $G(n,r+1)$, when a vertex $v$ is discovered via an edge from one of its neighbours, this leaves $r$ edges to unveil when $v$ is itself explored (unless $v$ is discovered multiple times, which at least at the start of the BFS is unlikely). Thus, a BFS of $G(n,r+1)$ looks similar to an outward BFS of $D(n,r)$.

On the other hand, in the inward BFS of $D(n,r)$ starting from $v$ (or at least near the start of the process) the number of in-edges arriving at a vertex are roughly distributed as a Binomial random variable with $n$ trials and success probability $r/n$. Thus, a BFS of  $G(n,(r+1)/n)$ looks similar to an inward BFS of $D(n,r)$.

The preceding paragraphs suggest that shortest paths in $D(n,r)$ are in some sense hybrids of shortest paths in $G(n,r+1)$ and in $G(n,(r+1)/n)$. This, together with~\eqref{eq:diam_RRG} and~\eqref{eq:diam_RG}, provides some intuition for the value of the diameter of $D(n,r)$ from Theorem~\ref{thm:main}: it is the average of the limit values in those formulae.

There is interesting related work on distances in graphs with random edge weights. We mention in particular the paper of Janson \cite{janson1999one} on typical and extreme distances in randomly edge-weighted complete graphs, and the subsequent work by Bhamidi and van der Hofstad~\cite{bhamidi2013diameter}, which establishes distributional convergence for the diameter.

To conclude this section, we discuss the stationary distribution of a simple random walk in these other models. While in undirected graphs the stationary distribution (if it exists) is completely determined by the degrees of the vertices, this is not the case in directed graphs. Cooper and Frieze~\cite{cooper2012stationary} give a very precise description of the stationary distribution of $D(n,c/n)$ when $c=c(n)>(1+\eps)\log{n}$, for any constant $\eps>0$, and use their result to compute the cover time of $D(n,c/n)$. It is worth noticing that for such values of $c$, both the in-degrees and out-degrees are of logarithmic order and concentrated around their expected values, which turns to be very useful for the analysis. It seems harder to find an interesting question about the stationary distribution of $D(n,c/n)$ when $c=c(n)<(1-\eps)\log{n}$ since like in random $r$-in regular digraphs, typically there are vertices with no out-edges.

\section{Notation and preliminaries}\label{sec:preliminaries}
We write $[n] = \{1,2,\ldots,n\}$, $\N = \{1,2,\ldots\}$, and $\N_0 = \{0,1,2,\ldots\}$. The notation $A \subset B$ allows that $A=B$; we write $A \subsetneq B$ for strict containment. Unless we explicitly indicate otherwise, asymptotic notations will always refer
to the case $n \to \infty$. We omit floors and ceilings when doing so improves readability.
All logarithms are natural unless a subscript specifies otherwise.

For any two random variables $X$ and $Y$, we use the notation $X\eqdist Y$ to denote that the corresponding probability distributions are equal.
For random variables $X,Y$, we write $X \preceq Y$, and say $X$ is stochastically dominated by $Y$, if $\Pr(X \le t) \ge \Pr(Y \le t)$ for all $t \in \R$. We say $X_1,\ldots,X_k$ are {\em independently} stochastically dominated by $Y_1,\ldots,Y_t$ if $\Pr(X_i \le t_i, 1 \le i \le k) \ge \prod_{i=1}^k \Pr(Y_i \le t_i)$, for all $(t_1,\ldots,t_k) \in \R^k$.


\subsection{Digraphs}
Let $D=(V(D),E(D))$ be a directed graph. For $S, S' \subseteq V(D)$ let
\begin{equation*}
E(S,S') = E_D(S,S') = \left\{ (u,v) \in E \middle| u \in S, v \in S' \right\}\,.
\end{equation*}
Given $S \subset [n]$, $D[S]=(S,E(S,S))$ is {\em the subgraph of $D$ induced by $S$}.

%



Given $u \in [n]$ and an integer $k\geq 0$,
write $N_k^+(D,u) = \{ v \in [n] : \dist(u,v) = k \}$ and $N_{\leq k}^+(D,u) = \cup_{j \leq k} N_j^+(u)$.
Similarly, let $N_k^-(D,u) = \{ v \in [n] :
\dist(v,u) = k \}$ and $N_{\leq k}^-(D,u) = \cup_{j \leq k} N_j^-(u)$.
We write $N^+(D,u) = N_1^+(D,u)$ and $N^-(D,u) = N_1^-(D,u)$.
We also let $d_k^+(D,u) = |N_k^+(D,u)|$, and define $d_{\leq
k}^+(D,u)$, $d_k^-(D,u)$, and $d_{\leq k}^-(D,u)$ correspondingly.
We write $N_k^+(u)=N_k^+(D,u)$, etcetera, when $D$ is clear from context.
%
%
%
\subsection{Concentration Inequalities}

We write $\Bin(N,p)$ to denote a Binomial random variable with $N$ trials and success probability $p$.
We write $\Po(r)$ to denote a Poisson random variables with parameter $r$.
We also write $\Ber(p)$ to denote a Bernoulli random variable with success probability $p$.

We will use the following version of Chernoff's bound for large deviations that can be found in~\cite{janson2011random}.

\begin{lemma}[Chernoff's inequality]\label{lem:chernoff}
For any $t\geq 0$ we have
\begin{equation}\label{eq:cher1}
\Pr(\Bin(N,p)\geq  Np+ t)\leq  e^{-\frac{t^2}{2(Np+t/3)}} \enspace,
\end{equation}
and
\begin{equation}\label{eq:cher1b}
\Pr(\Bin(N,p)\leq  Np- t)\leq  e^{-\frac{t^2}{2Np}} \enspace.
\end{equation}
\end{lemma}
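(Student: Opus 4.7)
The plan is the classical exponential-moment method (the ``Chernoff trick''). Write $X = \Bin(N,p) = \sum_{i=1}^N \xi_i$ with $\xi_i$ iid $\Ber(p)$, and for $\lambda > 0$ apply Markov's inequality to $e^{\lambda X}$:
\begin{equation*}
\Pr(X \geq Np + t) \leq e^{-\lambda(Np+t)}\, \E[e^{\lambda X}] = e^{-\lambda(Np+t)}\bigl(1 - p + p e^{\lambda}\bigr)^N \leq \exp\!\bigl(Np(e^{\lambda}-1) - \lambda(Np+t)\bigr),
\end{equation*}
using $1+y \leq e^y$ with $y = p(e^\lambda - 1)$. Optimizing in $\lambda$ by setting the derivative of the exponent to zero gives $\lambda^* = \log(1 + t/(Np))$, which yields the sharp bound $\Pr(X \geq Np+t) \leq \exp(-Np \cdot h(t/(Np)))$, where $h(x) := (1+x)\log(1+x) - x$.

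To extract \eqref{eq:cher1} from this sharp form, I would prove the elementary inequality
\begin{equation*}
h(x) \geq \frac{3x^2}{2(3+x)} \qquad \text{for all } x \geq 0,
\end{equation*}
which rearranges exactly into the claimed bound $e^{-t^2/(2(Np+t/3))}$ after substituting $x = t/(Np)$. This inequality is a routine calculus check: both sides and their first derivatives vanish at $x=0$, and a comparison of second derivatives (or a single differentiation after clearing the denominator) finishes it.

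For the lower tail \eqref{eq:cher1b}, I would repeat the exponential-moment argument with $\lambda < 0$: Markov applied to $e^{-\lambda X}$ and the same MGF bound give $\Pr(X \leq Np - t) \leq \exp(-Np \cdot h(-t/(Np)))$, valid for $0 \leq t \leq Np$ (outside that range \eqref{eq:cher1b} is trivial). The weaker form stated in the lemma then follows from the elementary inequality $h(-x) \geq x^2/2$ for $0 \leq x \leq 1$, again a short calculus check; note that no term analogous to the $t/3$ correction is needed since the lower tail is lighter.

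The only substantive obstacle is establishing the two calculus inequalities for $h$, which I anticipate being the main (though still routine) step; everything else is a mechanical application of Markov's inequality combined with independence of the $\xi_i$. The result is standard and is why the authors simply cite \cite{janson2011random} rather than reproducing the argument.
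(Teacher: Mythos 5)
Your proposal is a correct and complete plan for the standard exponential-moment proof, and the two calculus inequalities you isolate, $h(x) \geq \tfrac{3x^2}{2(3+x)}$ for $x\geq 0$ and $h(-x)\geq x^2/2$ for $0\leq x\leq 1$, are exactly the right lemmas to recover the stated forms. The paper itself offers no proof and simply cites \cite{janson2011random}, where essentially this argument appears, so there is nothing to compare against beyond noting that you have reconstructed the standard derivation faithfully.
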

We will also use Chebyshev's inequality: for any random variable $X$ and any $t \ge 0$,
$\Pr(|X-\E(X)|\geq t)\leq \frac{\mathbf{\sigma}^2}{t^2}$,
where $\mathbf{\sigma}^2 =\E(X^2)-\E(X)^2$.

\subsection{Trees and branching processes}\label{sub:trees}
In any rooted tree, we view edges as oriented from child to parent. Fix a rooted tree $T$ with root $v=v(T)$. Then for all $u \in V(T)$, $N^-(T,u)$ is the set of children of $u$. For $u \ne v$, let $p(u)=p_T(u)$ be the parent of $u$ in $T$, so $N^+(T,u)=\{p(u)\}$.
For $k \ge 0$, let $T_{\le k}$ be the subtree of $T$ induced by $N^-_{\le k}(T,v) = \{u \in V(T): \dist_T(u,v)\le k\}$; we view $T_{\le k}$ as rooted at $v$. Also, write $T_k= N^-_k(T,v)=\{u \in V(T): \dist_T(u,v)=k\}$.

A {\em plane tree} is a rooted tree in which the children of each node have a left-to-right order. Given a plane tree $T$, there is a canonical labelling of $V(T)$ by distinct elements of $\{\emptyset\} \cup \bigcup_{i \ge 1}\N^i$, as follows.  The root $v$ has label $\emptyset$; its children are labelled from left to right as $1,\ldots,|N^-(T,v)|$. Given $u \in V(T_k)$ with label $w_1w_2\dots w_k$, the children of $u$
are labelled from left to right as $(w_1\dots w_k i, 1\le i \le |N^-(T,u)|)$.


Conversely, given a rooted tree $T$ with $t$ vertices and an ordering of $V(T)$ as $v_1,\ldots,v_t$, say $w \in V(T)$ has index $j$ if $w=v_j$, for $1\leq j\leq t$. We view $V(T)$ as a plane tree using the convention that the children of each vertex are listed from left to right in increasing order of index. If $V(T) \subset \N$ then we always use the ordering inherited from $\N$. Thus, for a {\em rooted} tree $T$ with $V(T) \subset \N$, and a {\em plane} tree $T'$, we say $T$ and $T'$ are isomorphic, and write $T \cong T'$, if $T$ and $T'$ are identical when viewed as plane trees.

Finally, fix a non-negative, integer-valued random variable $\xi$. A {\em Galton-Watson tree} with branching mechanism $\xi$ is the random, potentially infinite family tree $\cT^\xi$ of a branching process started from a single individual, in which each individual reproduces independently according to $\xi$ (i.e. the number of offspring of each individual has the distribution
of $\xi$). The random tree $\cT^\xi$ is naturally viewed as a plane tree; see \cite{legall06rrt} for details and a careful construction.
If $\xi$ is $\Po(r)$ distributed we call $\cT^{\xi}$ a Poisson$(r)$ Galton-Watson tree.

\section{Theorem~\ref{thm:main}: upper bound}\label{sec:upper_proof}
In this section we describe our proof technique for the upper bound in Theorem~\ref{thm:main}, and prove the theorem assuming two technical estimates. For the remainder of the section let $D=D(n,r)$ and write $d_k^-(v)=d_k^-(D,v)$, $N_k^-(v)=N_k^-(D,v)$ etcetera.

In order to derive an upper bound on the diameter of $D$ we first show that for any fixed vertex $v$, with high probability the in-neighbourhood $N_{k}^-(v)$ is either empty or large for $k$ slightly larger than $\eta_r \log_r n$.

\begin{lemma}\label{lem:inverse}
Fix $v\in [n]$, let $k_0(v)=\min\{k:\;d^-_{k}(v)\not\in (0,\log^4{n})\}$. Then for every $\eps\in (0,1/10)$, there exists $\delta>0$ such that
\[
\Pr\left(k_0(v) > (\eta_r+\eps)\log_r{n} \mbox{ or } d_{\le k_0(v)}^-(v) \ge \log^7 n\right)= O(n^{-(1+\delta)})\, .
\]
\end{lemma}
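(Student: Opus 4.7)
My approach is to couple the in-BFS from $v$ with a Poisson$(r)$ Galton--Watson tree $\cT$, writing $|\cT_k|$ for the size of generation $k$. As long as the set of revealed vertices has size $n^{o(1)}$, the in-degree of each new vertex can be coupled to $\Po(r)$ up to negligible error, so the coupling (built in Section~\ref{sec:lemmas}) is valid throughout the at most $\polylog(n)$ explorations I care about, and I may argue directly about $|\cT_k|$. The arithmetic driver of the entire proof is the identity, obtained from $\lambda_r=1-e^{-r\lambda_r}$, that
\begin{equation*}
r(1-\lambda_r) = r^{-1/\eta_r}.
\end{equation*}

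Set $K=(\eta_r+\eps)\log_r n$. Since $\{k_0(v)>K\}\subseteq \{0<|\cT_k|<\log^4 n\ \forall k\le K\}\subseteq \{0<|\cT_K|<\log^4 n\}$, I split according to whether $\cT$ survives. On the event that $\cT$ goes extinct yet has height at least $K$, the tree conditioned on extinction is itself a Galton--Watson tree whose offspring law, via a direct generating-function computation, is the dual of $\Po(r)$, namely $\Po(r(1-\lambda_r))$. A first-moment bound on the population at depth $K$ of the subcritical dual yields
\begin{equation*}
\Pr(|\cT_K|>0,\ \cT \text{ extinct}) \le [r(1-\lambda_r)]^K = n^{-(\eta_r+\eps)/\eta_r} = n^{-1-\eps/\eta_r},
\end{equation*}
which is exactly the target of the lemma with $\delta$ any value less than $\eps/\eta_r$.

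On the event that $\cT$ survives but $|\cT_K|<\log^4 n$, I exploit the martingale limit $W=\lim_k |\cT_k|/r^k$, which is strictly positive on survival. The Laplace transform $\phi(s)=\E[e^{-sW}]$ satisfies $\phi(s)=\exp(r(\phi(s/r)-1))$; linearising near the fixed point $1-\lambda_r$ with multiplier $f'(1-\lambda_r)=r(1-\lambda_r)=r^{-1/\eta_r}$ and iterating gives $\Pr(0<W<\zeta) = O(\zeta^{1/\eta_r})$ as $\zeta\to 0$. Applying this with $\zeta=\log^4 n/r^K$ produces
\begin{equation*}
\Pr(\cT \text{ survives},\ |\cT_K|<\log^4 n) = O\bigl((\log^4 n)^{1/\eta_r}\cdot n^{-1-\eps/\eta_r}\bigr),
\end{equation*}
which absorbs into $O(n^{-1-\delta})$. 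For the overshoot event $\{d_{\le k_0(v)}^-(v)\ge \log^7 n\}$: by the definition of $k_0(v)$ the previous layer has fewer than $\log^4 n$ vertices, so the overshoot forces some vertex there to have at least $\log^3 n$ in-neighbours. The tail $\Pr(\Po(r)\ge \log^3 n) \le e^{-\Omega(\log^3 n\cdot\log\log n)}$ is super-polynomial, and a union bound over the at most $K\cdot\log^4 n=\polylog(n)$ explored vertices makes this contribution negligible.

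\textbf{Main obstacle.} The delicate step is the survival-regime bound $\Pr(0<W<\zeta)=O(\zeta^{1/\eta_r})$: it requires the sharp linearisation of $\phi$ at its fixed point, and it is precisely the identity $r(1-\lambda_r)=r^{-1/\eta_r}$ that forces the extinction and survival tails to produce the \emph{same} exponent $1+\eps/\eta_r$. Any coarser estimate would make one of the two regimes dominate in a way that would fail to reproduce the constant $\eta_r$ which ultimately governs the diameter.
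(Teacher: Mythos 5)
Your overall decomposition --- couple the iBFS tree to a Poisson$(r)$ Galton--Watson tree $\cT$, then split $\{0<|\cT_K|<\log^4 n\}$ into extinction and survival --- is a sensible route, and the arithmetic identity $r(1-\lambda_r)=r^{-1/\eta_r}$ is exactly what drives the paper's estimate too. The extinction bound via the dual (subcritical $\Po(r(1-\lambda_r))$) tree and a first-moment argument is correct. The overshoot bound, via pigeonhole to a single vertex in generation $k_0-1$ with $\gtrsim \log^3 n$ in-neighbours, followed by a Poisson tail and a $\polylog(n)$ union bound, also works, though the paper instead uses a binomial tail on the whole generation (Corollary~\ref{cor:binom} plus Chernoff); the two are interchangeable.

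However there are two genuine gaps. \textbf{(1) Survival regime.} You write that applying $\Pr(0<W<\zeta)=O(\zeta^{1/\eta_r})$ with $\zeta=\log^4 n/r^K$ ``produces'' $\Pr(\cT\text{ survives},\,|\cT_K|<\log^4 n)=O(\zeta^{1/\eta_r})$. But the events are not nested: at a finite depth $K$ there is no implication from $\{|\cT_K|<\omega\}$ to $\{W<\omega/r^K\}$, because conditional on $|\cT_K|=m$ we have $W=r^{-K}\sum_{i=1}^m W_i$, and one of the $W_i$ may be large even when $m$ is small. A correct closing of this regime uses the \emph{reduced tree} $\tilde\cT$ (the subtree of $\cT$ spanned by vertices with infinite line of descent): conditional on survival, $\tilde\cT$ is a GW tree whose offspring law is $\ge 1$ a.s.\ and satisfies $\Pr(\tilde\xi=1)=r(1-\lambda_r)$, whence $\Pr(|\tilde\cT_K|<\omega)$ can be estimated and dominates $\Pr(|\cT_K|<\omega\mid\text{surv})$. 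This is in effect what the cited Lemma~\ref{lem:riordan} of Riordan--Wormald packages: $\Pr(0<|\cT_k|<\omega)\le C(r(1-\lambda_r))^{k-\lfloor\log_r\omega\rfloor}$. The paper simply invokes that lemma. \textbf{(2) Coupling validity.} You assert that the revealed set stays of size $\polylog(n)$ ``throughout the at most $\polylog(n)$ explorations I care about,'' so the Poisson coupling is valid. But the event $\{0<d^-_K(v)<\log^4 n\}$ does \emph{not} a priori prevent some intermediate layer $j<K$ from becoming large and then shrinking back. The paper deals with this explicitly (Lemma~\ref{lem:no_strange_things}, folded into Proposition~\ref{lem:towers}), which shows that a large cumulative neighbourhood coexisting with a small last layer is itself an $O(n^{-3})$ event. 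Your plan needs an analogous safety net before the coupling to $\cT$ can be applied over the entire depth $K$.

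In short: the paper's proof is much shorter because it outsources the delicate GW estimate to Lemma~\ref{lem:riordan} and packages the coupling safety net into Proposition~\ref{lem:towers}; your plan is attempting to reprove the Riordan--Wormald estimate from scratch, and the survival half of that reproof (and the control on the size of the revealed set) would need to be filled in.
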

Next, if $v \in [n]$ has $N^+(v)=\{v\}$ (i.e., if all edges leaving $v$ are self-loops) then call $v$ a {\em loop vertex}.
Let $\ESL$ be the event that $D$ contains some loop vertex.
Each vertex is independently a loop vertex with probability $n^{-r}$, so
\begin{align}\label{eq:prob_ESL}
\Pr(\ESL) = 1-(1-n^{-r})^n= \Theta(n^{1-r})\;.
\end{align}

Note that if $r\geq 3$, the probability of a given vertex being a loop vertex is $O(n^{-3})$. This bound is small enough that it would allow union bounds over pairs of vertices, which would simplify some proofs. Since we aim to prove our result also in the case $r= 2$, we need to be a bit more careful in our computations.
\begin{proposition}\label{prop:direct}
Fix $k,n \in \N$ and $u,v \in [n]$. Let $E_k =\{d^-_k(v)\geq \log^4{n},d^-_{\le k}(v) \le \log^7 n\}$, and
fix a graph $H$ with $v \in V(H) \subset [n]$ such that $\Pr(D[N^-_{\leq k}(v)]=H, E_k, \overline{\ESL})>0$.
Then
\[
\Pr(\dist(u,N^-_{\leq k}(v))> \log_r n - \log_r \log_r n,\overline{\ESL}\mid D[N^-_{\leq k}(v)]=H)= O(n^{-3})\, ,
\]
the preceding bound holding uniformly over $k$ and over all $H$ satisfying the above conditions.
\end{proposition}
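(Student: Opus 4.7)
Write $S=V(H)=N^-_{\le k}(v)$, $S'=N^-_{<k}(v)$, and $T=S\setminus S'=N^-_k(v)$, so $|T|\ge\log^4 n$ and $|S|\le\log^7 n$. The approach rests on the following: conditional on $D[N^-_{\le k}(v)]=H$, the $r$ out-edges of each vertex $w\in[n]\setminus S$ are iid uniform on $[n]\setminus S'$, independently across $w$, because the event $N^-_{\le k}(v)=S$ is precisely the event that no out-edge from $[n]\setminus S$ lands in $S'$. Consequently any revealed out-edge from a non-$S$ vertex hits $T$ with probability $p_S:=|T|/(n-|S'|)\ge(1-o(1))\log^4n/n$. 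I will run the out-BFS from $u$ up to depth $d=\log_r n-\log_r\log_r n$, writing $L_i$ for the number of non-$S$ vertices at depth exactly $i$ and $N_i:=\sum_{j\le i}L_j$, and bound the failure probability via three estimates.

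First, summing the probability that some closed set $C\ni u$ with $C\cap S=\emptyset$ and $|C|=m\in[2,6n/(r\log^3 n)]$ is closed, noting that each out-edge from $C$ lands in $C$ with probability $m/(n-|S'|)$ under the conditioning,
\[
\Pr\!\left(|N^+(u)|\le\tfrac{6n}{r\log^3 n},\,\overline{\ESL}\,\Big|\,D[N^-_{\le k}(v)]=H\right)\le\sum_{m=2}^{6n/(r\log^3 n)}\binom{n-|S'|-1}{m-1}\left(\tfrac{m}{n-|S'|}\right)^{rm}=O(n^{-3}),
\]
since the series decreases geometrically for $r\ge2$ and is dominated by its $m=2$ term of order $2^{2r}/n^{2r-1}$; the $m=1$ case (a single self-looping vertex) is excluded by $\overline{\ESL}$, which is essential when $r=2$. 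Second, on the complementary event $\{|N^+(u)|>6n/(r\log^3 n)\}$ the BFS grows past depth $d-1$ and I claim $N_{d-1}\ge 6n/(r\log^3 n)$ except on an event of further probability $O(n^{-3})$. Indeed, the idealized BFS size at depth $d-1$ is $r^{d-1}=\Theta(n/\log n)$, a factor $\log^2 n$ above the target; once $L_i\ge\log^{10}n$ at some level, Chernoff bounds on the relevant binomial growth give per-level multiplicative growth $L_{i+1}\ge rL_i(1-O(1/\log n))$ with failure probability $\exp(-\Omega(\log^2n))$, summable over $O(\log n)$ levels within the $O(n^{-3})$ budget. Third, once $\ge 6n/\log^3 n$ out-edges have been revealed from non-$S$ vertices with no $S$-hit, each is an independent uniform draw from $[n]\setminus S'$, so the probability that none lands in $T$ is at most $(1-p_S)^{6n/\log^3 n}\le\exp(-6\log n)=n^{-6}=o(n^{-3})$.

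The main technical hurdle is the slow-start regime $L_i\le\log^{10}n$, for which the naive branching-process survival probability is $\Theta(1)$ rather than $1-O(n^{-3})$. The resolution is that on $\overline{\ESL}$ any BFS stuck at small $L_i$ for more than $O(\log\log n)$ levels forces $u$ into a highly restricted local configuration---essentially a thin closed sub-structure with many collisions---whose probability is controlled by a closed-set-style enumeration analogous to the first estimate, again yielding $O(n^{-3})$. Uniformity in $H$ and $k$ follows because the estimates depend on $H$ only through the bounds $|T|\ge\log^4 n$ and $|S|\le\log^7 n$ provided by $E_k$.
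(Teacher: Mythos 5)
Your decomposition mirrors the paper's at a high level: \textbf{(i)} rule out a degenerate ``trapped'' start, \textbf{(ii)} show multiplicative growth of the out-BFS once it has cleared a size threshold, \textbf{(iii)} argue that once many out-edges have been exposed, one of them lands in $T=N^-_k(v)$. Your estimate \textbf{(iii)} is essentially the paper's Lemma~\ref{lem:tau_tauprime} and is fine. Your estimate \textbf{(ii)}, once $L_i \ge \log^{10}n$, is essentially Lemma~\ref{lem:aux}/Corollary~\ref{cor:jell} and is also fine. Your estimate \textbf{(i)} (a union bound over small closed sets $C\ni u$, with $m\ge 2$ because $\overline{\ESL}$ rules out $m=1$) is a valid replacement for the paper's treatment of the ``BFS dies out'' case and indeed gives $O(n^{-(2r-1)})=O(n^{-3})$ for $r=2$.

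However, there is a genuine gap between \textbf{(i)} and \textbf{(ii)}, and it is exactly where you flag a ``technical hurdle.'' Your estimate \textbf{(i)} only controls the event that the out-component of $u$ in $[n]\setminus V(H)$ is \emph{closed and small}. It does not control the event that the out-component is large but the level sizes $L_i$ stay below $\log^{10}n$ for too many levels (e.g.\ the early BFS tree has a few cascading collisions that keep $L_i$ small for a while before exploding). Your diagnosis of the difficulty is also off: in an $r$-out digraph the out-BFS has no $\Theta(1)$ branching-process extinction probability (that is a feature of the \emph{in}-BFS, which is nearly Poisson$(r)$); here the only way to ``die'' is to be trapped in a closed set, which \textbf{(i)} already handles. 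The remaining issue is purely a timing issue: could the BFS be slow without dying? The hand-waved ``closed-set-style enumeration'' is in the right spirit but is not a proof, and the phrase ``stuck at small $L_i$ for more than $O(\log\log n)$ levels'' presupposes the very fact that needs proving.

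The paper closes this gap by counting collisions directly. Lemma~\ref{lem:dfive} shows $\Pr^H(d_5^*<5,\,N^+_{\le 5}(u)\cap V(H)=\emptyset,\,\overline{\ESL})=O(n^{-3})$ because $d_5^*<5$ forces at least two excess edges in $D[N^*_{\le 5}]$ (equivalently, at least three collisions), and any fixed constant-size digraph with excess $\ge 2$ and no self-loops has conditional probability $O(n^{-3})$. Lemma~\ref{lem:grow_by_r} then shows that, given $d_5^*\ge 5$, the event that some level $i\le(\log_r n)/8$ has $\ge 4$ collisions has probability $O(n^{-3})$; barring this, $d_i^* \ge r^{i-5}+5$ by induction, so the BFS reaches $\log_r^3 n$ within $j^*=3\log_r\log_r n + 5$ levels. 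That collision-counting argument — with the specific thresholds (excess $\ge 2$ at the start, $\ge 4$ collisions at a single later level) chosen precisely to beat $n^{-3}$ — is the piece your proposal is missing, and without it the claim $N_{d-1}\ge 6n/(r\log^3 n)$ does not follow from \textbf{(i)} and \textbf{(ii)} alone.
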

We prove Proposition~\ref{prop:direct} in Section~\ref{sec:upper}. In the remainder of the section, we finish the prove of the upper bound from Theorem~\ref{thm:main}, assuming Lemma~\ref{lem:inverse} and Proposition~\ref{prop:direct}.
\begin{proof}[Proof of the upper bound in Theorem~\ref{thm:main}]
Fix $\eps > 0$. We show that $\Pr(\diam (D)\leq (1+\eta_r+\eps)\log_r{n}) = 1-o(1)$. Since $\diam(D_0(n,r)) \le \diam(D(n,r))$, the same bound immediately holds for $D_0(n,r)$.

Let $k^*=(\eta_r+\eps)\log_r{n}$ and let $\ell^* =  \log_r n - \log_r \log_r n$.
 By~\eqref{eq:prob_ESL} for every $r\geq 2$, we have that $\Pr(\ESL)=O(n^{-1})$. So
\begin{align*}
&\Pr(\exists u,v\in [n],\;\dist(u,v) \in (k^*+\ell^*,\infty))\\
\leq &  O(n^{-1})+ \sum_{v\in [n]} \Pr(\exists u\in [n],\;\dist(u,v)\in (k^*+\ell^*,\infty), \overline{\ESL})\, .
\end{align*}
Define $k_0=k_0(v)$ as in Lemma~\ref{lem:inverse}, and let $E = \{k_0\leq k^*, d_{\le k_0}^-(v) < \log^7 n\}$. Then by Lemma~\ref{lem:inverse},
\begin{align*}
& \Pr(\exists u\in [n],\dist(u,v)\in (k^*+\ell^*,\infty), \overline{\ESL})\\
\leq &
\Pr(\overline{E}, \overline{\ESL})+ \Pr(\exists u\in [n],\;\dist(u,v)\in (k^*+\ell^*,\infty), E,\overline{\ESL})\\
 = & O(n^{-(1+\delta)})+\Pr(\exists u\in [n],\;\dist(u,v)\in (k^*+\ell^*,\infty), E,\overline{\ESL})\, ,
\end{align*}
Now let $\cH$ be the set of graphs $H$ with $v \in V(H)$ such that $\Pr(D[N^-_{\leq k_0}(v)]=H,E, \overline{\ESL})>0$. Then
\begin{align*}
& \Pr(\exists u\in [n],\;\dist(u,v)\in (k^*+\ell^*,\infty), E,\overline{\ESL})\\
\le & \sup_{H\in \cH} \Pr(\exists u\in [n],\;\dist(u,v)\in (k^*+\ell^*,\infty), \overline{\ESL} \mid D[N^-_{\leq k_0}(v)]=H)\\
\le & \sup_{H\in \cH} \sum_{u\in [n]} \Pr(\dist(u,v)\in (k^*+\ell^*,\infty), \overline{\ESL}\mid D[N^-_{\leq k_0}(v)]=H)\;.
\end{align*}
For each $H \in \cH$ there is a (non-random) constant $k=k(H)$ such that if $D[N^-_{\leq k_0}(v)]=H$ then $k_0=k(H)$, so the events
$D[N^-_{\leq k_0}(v)]=H$ and $D[N^-_{\leq k}(v)]=H$ are identical.
Furthermore, given that $D[N^-_{\leq k}(v)]=H$ we either have $d^-_{k}(v)=0$ or $d^-_{k}(v) \ge \log^4 n$.
In the latter, if $D[N^-_{\leq k}(v)]=H$ then $E_k$ occurs, so $\Pr(D[N^-_{\leq k}(v)]=H,E_k,\overline{\ESL}) > 0$,
so we can apply Proposition~\ref{prop:direct} (with $k=k(H)=k_0$). In both cases we obtain
\begin{align*}
&\Pr(\dist(u,v)\in (k^*+\ell^*,\infty),\overline{\ESL}\mid D[N^-_{\leq k_0}(v)]=H)\\
=&\Pr(\dist(u,v)\in (k^*+\ell^*,\infty),\overline{\ESL}\mid D[N^-_{\leq k}(v)]=H) \\
=&O(n^{-3})\, ,
\end{align*}
and conclude that
\begin{align*}
&\Pr(\exists u,v\in [n],\;\dist(u,v) \in(k^*+\ell^*,\infty))\\
=& O(n^{-1}) +\sum_{v\in[n]} O(n^{-(1+\delta)})+ \sum_{u,v \in[n]} O(n^{-3}) = O(n^{-\delta})\, .
\end{align*}
It follows that with high probability $\diam (D)\leq k^*+\ell^*\leq (1+\eta_r+\eps)\log_r{n}$.
\end{proof}

\section{Breadth-first search and conditioning}\label{sec:bfs_and_cond}
In this section we describe the breadth-first search (BFS) procedures, which are fundamental to our analysis, and use them to prove a handful of stochastic domination results for neighbourhood sizes in $D(n,r)$.

\subsection{Outward and inward breadth-first search}\label{sub:bfs}
Fix a digraph $D$ together with an ordering of its vertices $V(D)$ as $(v_1,\ldots,v_n)$. The {\em outward breadth-first search} (oBFS) starting from node $v \in V(D)$ is a deterministic process $((R_i^+(D,v),S_i^+(D,v)), i \ge 0)$, defined as follows. At time $i$, $R_i^+=R_i^+(D,v)$ is the set of explored vertices and $S_i^+=S_i^+(D,v)$ is the {\em sequence} of discovered but not yet explored vertices; $S$ is treated as a first-in first-out queue. Node $w \in V(D)$ has {\em index} $j$ if $w=v_j$.

Begin with $R_0^+=\emptyset$ and $S_0^+=(v)$. Now fix $i \ge 0$ and suppose $(R_i^+,S_i^+)$ are already defined.
Step $i$ of the process is defined as follows. If $S_i^+ = (s_{i,1},\ldots,s_{i,j})$ has positive length then write $u_i^+=u_i^+(D,v)=s_{i,1}$ and $C_i^+(D,v)=N^+(D,u_i^+)\setminus (R_i^+ \cup S_i^+)$. List the elements of $C_i^+(D,v)$ in increasing order of index as $w_{i,1},\ldots,w_{i,k}$; it is possible that $k=0$. Then set
\[
R^+_{i+1} = R^+_{i} \cup \{s_{i,1}\},\quad \mbox{ and } S_{i+1}^+ = (s_{i,2},\ldots,s_{i,j},w_{i,1},\ldots,w_{i,k}).
\]
In words, at step $i$, $u_i^+=s_{i,1}$ is explored, and $w_{i,1},\ldots,w_{i,k}$ are discovered and added to the back of the queue for later exploration. If $S_i^+$ has zero length (i.e., $S_i^+ = ()$), then $S_{i+1}^+=S_i^+$ and $R_{i+1}^+=R_i^+$.

Writing $i^+=i^+(D,v) = \min\{i: S_{i+1}^+=S_i^+\}$, then $R_{i^+}^+(D,v)$ is precisely the set of vertices $w$ with $\dist_D(v,w) < \infty$. The {\em oBFS tree} $T^+(D,v)$ has root $v$ and vertices $R_{i^+}(D,v)$; the children of $s_{i,1}$ are precisely the vertices
$w_{i,1},\ldots,w_{i,k}$ newly discovered in step $i$. We write $T^+(D,v,m)$ for the subtree of $T^+(D,v)$ with vertices $R^+_m(D,v) \cup S^+_m(D,v)$. Note that if $S^+_m(D,v)$ has length $\ell$ then its elements are precisely $(u^+_{m+i}(D,v),0 \le i < \ell)$, because oBFS explores these vertices before any others. We therefore have $R^+_m(D,v) \cup S^+_m(D,v) = \{u_i^+(D,v),0 \le i < m+\ell\}$,

In the {\em inward breadth-first search} (iBFS) process $((R_i^-(D,v),S_i^-(D,v)), i \ge 0)$, the sets $C_i^-(D,v)$ and the terminal time $i^-=i^-(D,v)$ are defined in just the same manner but exploring in-neighbourhoods rather than out-neighbourhoods to discover vertices; in particular $R_{i^-}^-(D,v)= \{w:\dist_D(w,v) < \infty\}$. We also write $T^-(D,v,m)$ for the subtree of $T^-(D,v)$ with vertices $R^-_m(D,v) \cup S^-_m(D,v)$.

Observe that using the notation from Section~\ref{sub:trees}, we have $T^+_k(D,v) = N^+_k(D,v)$ and $T^-_k(D,v) = N^-_k(D,v)$ for all $k$.

\subsection{Conditioning on neighbourhoods and the BFS exploration in \texorpdfstring{$D(n,r)$}{D(n,r)}}\label{sub:bfs_dnr}
We next describe the effect of iBFS on the law of $D(n,r)$. For the remainder of the section we write $D=D(n,r)$ and fix $v \in [n]$.
We write $N_i^-=N_i^-(D,v)$, $u_i^-=u_i^-(D,v)$, $T^-(m) = T^-(D,v,m)$, etcetera. Informally, the point of this section may be summarized as follows: if vertex $u_i^-$ is discovered at step $j$ then all we know about $u_i^-$ is that it has an edge to $u_j^-$ and has no edges to $u_k$ for $k < j$. We now state and prove some useful stochastic identities and inequalities which result from this.

\begin{lemma}\label{lem:ibfs_stoc}
Fix $m \in \N_0$. Conditional on $((R^-_i,S^-_i),0 \le i \le m)$, independently for all $w \in [n]$ we have
\begin{align*}
|E(w,R^-_m \cup S^-_m)| & \eqdist \Bin\left(r,\frac{|S^-_m|}{n-m}\right)
\end{align*}
if $w\not \in R_m^- \cup S_m^-$,
\[
1+\Bin\left(r-1,\frac{|S^-_m|}{n-m}\right) \preceq |E(w,R^-_m \cup S^-_m)| \preceq 1+\Bin\left(r-1,\frac{m+|S^-_m|}{n}\right)
\]
if $w \in R^-_m \cup S^-_m\setminus \{v\}$, and $|E(v,R^-_m \cup S^-_m)|  \eqdist \Bin\left(r,\frac{m+|S^-_m|}{n}\right)$.
\end{lemma}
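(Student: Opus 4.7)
The plan is to observe that for any realization $\sigma$ of the iBFS history, the conditioning event $\{((R_i^-, S_i^-), 0 \le i \le m) = \sigma\}$ factorizes as a product $\bigcap_{w \in [n]} A_w(\sigma)$, where each $A_w(\sigma)$ depends only on the out-edges $(L_{w, s})_{s=1}^r$ of the single vertex $w$. Concretely, for $w \notin R_m^- \cup S_m^-$ one takes $A_w = \{L_{w, s} \notin R_m^- \text{ for all } s\}$; for $w \in R_m^- \cup S_m^- \setminus \{v\}$ discovered at step $j = j(w)$, $A_w = \{L_{w, s} \notin \{u_0^-, \ldots, u_{j-1}^-\} \text{ for all } s \text{ and } L_{w, s} = u_j^- \text{ for some } s\}$; and for $w = v$, $A_w = [n]^r$, since iBFS never probes $v$'s out-edges. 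Because the unconditional law of the edges is a product over $w$, this factorization immediately yields the claimed conditional independence across $w$.

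Next I would compute the conditional law case by case. For $w \notin R_m^- \cup S_m^-$, $(L_{w, s})_s$ is iid uniform on $[n] \setminus R_m^-$ (of size $n - m$), and $(R_m^- \cup S_m^-) \cap ([n] \setminus R_m^-) = S_m^-$, so $|E(w, R_m^- \cup S_m^-)| \eqdist \Bin(r, |S_m^-|/(n - m))$. For $w = v$, $(L_{v, s})_s$ is iid uniform on $[n]$, so $|E(v, R_m^- \cup S_m^-)| \eqdist \Bin(r, (m + |S_m^-|)/n)$. For the remaining case $w \in R_m^- \cup S_m^- \setminus \{v\}$ discovered at step $j$, I would introduce the \emph{first-hit slot} $s^* := \min\{s : L_{w, s} = u_j^-\}$, which is well-defined under $A_w$. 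A short computation using that $(L_{w, s})_s$ is uniform over $A_w$ shows that, conditional on $s^*$, the slots $s < s^*$ are iid uniform on $[n] \setminus (\{u_0^-, \ldots, u_{j-1}^-\} \cup \{u_j^-\})$ (size $n - j - 1$) while slots $s > s^*$ are iid uniform on $[n] \setminus \{u_0^-, \ldots, u_{j-1}^-\}$ (size $n - j$), with mutual independence across slots. Writing $q_- = (m + |S_m^-| - j - 1)/(n - j - 1)$ and $q_+ = (m + |S_m^-| - j)/(n - j)$, this gives
\[
|E(w, R_m^- \cup S_m^-)| = 1 + \Bin(s^* - 1, q_-) + \Bin(r - s^*, q_+),
\]
with the two binomials independent given $s^*$.

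Both bounds then follow from the elementary inequality $|S_m^-|/(n - m) \le q_- \le q_+ \le (m + |S_m^-|)/n$: after cross-multiplication and using $0 \le j \le m - 1$, each of these reduces to $m + |S_m^-| \le n$, which is just the fact that $R_m^- \cup S_m^- \subseteq [n]$. Bracketing each Bernoulli parameter in the above decomposition by $|S_m^-|/(n - m)$ below and $(m + |S_m^-|)/n$ above, and using that stochastic domination is preserved by sums of independent Bernoullis and under mixing over $s^*$, yields the two-sided bound. The main obstacle to overcome is the upper bound: the naive decomposition $|E(w, R_m^- \cup S_m^-)| = X_{\{u_j^-\}} + \Bin(r - X_{\{u_j^-\}}, q_-)$ with $X_{\{u_j^-\}} \ge 1$ actually stochastically \emph{dominates} $1 + \Bin(r - 1, q_-)$ when $X_{\{u_j^-\}} \ge 2$, so it cannot be used directly. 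Peeling off exactly one copy of $u_j^-$ via the first-hit slot $s^*$ leaves $r - 1$ slots, each landing in $R_m^- \cup S_m^-$ with marginal probability at most $(m + |S_m^-|)/n$, which restores the correct direction of domination.
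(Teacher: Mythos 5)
Your proof is correct and takes essentially the same route as the paper's. The paper encodes your first-hit slot $s^*$ via the quantity $\rho(w,u_j^-)=\min\{q:L_{w,q}=u_j^-\}$, then exactly as you do, splits the remaining $r-1$ slots into those before and after the first hit (with parameters $(|R_m^-\cup S_m^-|-(j+1))/(n-(j+1))$ and $(|R_m^-\cup S_m^-|-j)/(n-j)$, which are your $q_-$ and $q_+$), and concludes by noting $0\le j<m$; your explicit verification of the chain $|S_m^-|/(n-m)\le q_-\le q_+\le(m+|S_m^-|)/n$ and your closing remark about why the naive ``count hits to $u_j^-$'' decomposition gives the wrong direction of domination are helpful elaborations of a step the paper leaves implicit, not a different argument.
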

\begin{proof}
Recall the canonical construction of $D=D(n,r)$ from the introduction, and for each $x,y \in [n]$ let $\rho(x,y) = \min\{q: L_{x,q}=y\}$. Then $\rho(x,y) \le r$ precisely if there is a copy of the oriented edge $xy$ in $D(n,r)$, and otherwise $\rho(x,y)=\infty$.

Now fix $w \in [n]$. Suppose $w \not \in R^-_m \cup S^-_m$ ; then there are
no edges from $w$ to $R_m^-$. In other words, for each $1 \le j \le r$, we have $L_{w,j} \not \in R_m^- = \{u_i^-,0 \le i < m\}$.
It follows that the conditional law of $|E(w,R_m^- \cup S_m^-)|$ given $(R^-_i,0 \le i \le m)$ and $(S^-_i,0 \le i \le m)$
is $\Bin(r, |S_m^-|/(n-|R^-_m|))$. The result follows in this case since $|R^-_m|=m$.

Now suppose $w \in R^-_m \cup S^-_m \setminus \{v\}$; then the parent $p_{T^-(m)}(w)$ lies in $R_m^-$ so satisfies $p_{T^-(m)}(w) = u_j^-$ for some $0 \le j < m$.
We have $p_{T^-(m)}(w) = u_j^-$ precisely if $w$ has no edges to $\{u_i^-: 0 \le i < j\}$ but has an edge to $u_j^-$; equivalently, $\rho(w,u_i^-)=\infty$ for each $0 \le i < j$, and $\rho(w,u_j^-)=k$ for some $1 \le k \le r$.
The heads of the first $(k-1)$ out-edges from $w$ are then uniformly distributed over $[n] \setminus \{u_i^-: 0 \le i < j\}$, and the heads
of the $r-k$ last out-edges from $w$ are uniformly distributed over $[n] \setminus \{u_i^-: 0 \le i \le j\}$.

The index $j$ is determined by $((R^-_i,S^-_i),0 \le i \le m)$. Given that $w \in R^-_m \cup S^-_m \setminus \{v\}$, we thus have
\[
1+\Bin\left(r-1,\frac{|R_m^- \cup S_m^-|-(j+1)}{n-(j+1)}\right) \preceq |E(w,R_m^- \cup S_m^-)| \preceq 1+\Bin\left(r-1,\frac{|R_m^- \cup S_m^-|-j}{n-j} \right).
\]
Since $0 \le j < m$, and $|R_m^-|=m$, the second claim follows. The argument when $w=v$ is similar but easier.
Finally, the independence asserted by the lemma follows from the independence of the random variables $(L_{w,p}:(w,p) \in [n]\times[r])$.
\end{proof}
For the next corollary, recall that $d_{\le j}^-=|N_{\le j}^-|=|N_{\le j}^-(D,v)|$. 
\begin{corollary}\label{cor:ibfs_stoc}
Fix $j \in \N_0$. Conditional on $(N_i^-,0 \le i \le j)$, independently for all $w \in [n]$ we have
$|E(w,N_{\le j}^-)| \eqdist \Bin(r,d^-_{\le j}/n)$ if $w=v$,
$|E(w,N_{\le j}^-)| \eqdist \Bin(r,d_{j}^-/(n-d^-_{\le j-1}))$ if $w \not\in N^-_{\le j}$, and
\[
1+\Bin\left(r-1,\frac{d^-_j}{n-d^-_{\le j-1}}\right) \preceq |E(w,N_{\le j}^-)| \preceq 1+\Bin\left(r-1,\frac{d^-_{\le j}}{n}\right)
\]
if $w \in N^-_{\le j} \setminus \{v\}$.
\end{corollary}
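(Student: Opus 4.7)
The plan is to apply \refL{lem:ibfs_stoc} at the (random) step $m = d^-_{\le j-1}$ at which iBFS has just finished exploring every vertex of $N^-_{\le j-1}$. At that step one has $R_m^- = N^-_{\le j-1}$ and the queue $S_m^-$ is precisely the vertices of $N_j^-$ listed in BFS order, so $R_m^- \cup S_m^- = N^-_{\le j}$, $|R_m^-| = m = d^-_{\le j-1}$, $|S_m^-| = d^-_j$, and $m + |S_m^-| = d^-_{\le j}$. Substituting these identities into the three cases of \refL{lem:ibfs_stoc} yields exactly the three cases of the corollary, but conditional on the finer $\sigma$-algebra $\cF_m = \sigma((R_i^-, S_i^-),\, 0 \le i \le m)$ rather than on the coarser $\cG_j = \sigma(N_i^-,\, 0 \le i \le j)$. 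Since $m$ is random, I would read this step as applying \refL{lem:ibfs_stoc} separately on each event $\{d^-_{\le j-1} = q\}$ (on which $m = q$ is a deterministic integer) and reassembling.

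The remaining task is to descend from the fine conditioning on $\cF_m$ to the coarse conditioning on $\cG_j$. Because each $N_i^-$ is a function of the BFS sets, $\cG_j \subset \cF_m$. For the two cases in which \refL{lem:ibfs_stoc} produces an equality in distribution ($w \not\in N^-_{\le j}$ and $w = v$), the relevant binomial parameter depends on the fine conditioning only through $m$ and $|S_m^-|$, both of which are $\cG_j$-measurable; the tower property then gives that the conditional law given $\cG_j$ is the same binomial. For the third case ($w \in N^-_{\le j} \setminus \{v\}$), the two-sided stochastic dominations have parameters that are again $\cG_j$-measurable, and stochastic domination descends to coarser $\sigma$-algebras by the standard averaging argument: if $\Pr(X \le t \mid \cF_m) \ge \Pr(Y \le t)$ a.s., then taking $\cG_j$-conditional expectations yields $\Pr(X \le t \mid \cG_j) \ge \Pr(Y \le t)$ a.s.

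Finally, the independence across $w \in [n]$ asserted conditionally on $\cF_m$ in \refL{lem:ibfs_stoc} is preserved under the coarsening to $\cG_j$: in the equality-in-distribution cases the marginal laws are already $\cG_j$-measurable, so the joint conditional law given $\cG_j$ factorizes by the tower property, and the independent-stochastic-domination notion used in the paper factorizes through the product of marginal bounds in the same way. There is no real mathematical obstacle; the only point deserving mild care is the bookkeeping around the random index $m$, which is resolved by the event-by-event decomposition mentioned at the end of the first paragraph.
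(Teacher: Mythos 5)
Your proposal is correct and takes essentially the same approach as the paper, whose entire proof reads ``Apply Lemma~\ref{lem:ibfs_stoc} at time $m = d^-_{\le j-1}$.'' The additional care you take with the random index $m$ and with descending from the fine conditioning $\sigma\bigl((R_i^-,S_i^-),\,0 \le i \le m\bigr)$ to the coarser $\sigma\bigl(N_i^-,\,0 \le i \le j\bigr)$ is sound (the binomial parameters in all three cases depend only on $d^-_{\le j-1}$, $d^-_j$, $d^-_{\le j}$, which are measurable with respect to the coarser $\sigma$-algebra, so the tower property passes the exact laws, the two-sided dominations, and the factored ``independent stochastic domination'' bound down intact), but the paper treats this as immediate and does not spell it out.
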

\begin{proof}
Apply Lemma~\ref{lem:ibfs_stoc} at time $m = d^-_{\le j-1}$. 
\end{proof}
\begin{corollary}\label{cor:binom}
For all $j, q, p \in \mathbb{N}_0$, given that $d^-_{j}=q$ and $d^-_{\leq j}=p$,
$d^-_{j+1} \eqdist \Bin(n-p,1-(1-q/(n-p))^r)$ and
$d^-_{j+1} \preceq \Bin\left(r(n-p),\frac{q}{n-p+q}\right)$.
\end{corollary}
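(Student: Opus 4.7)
The plan is to derive both parts of Corollary~\ref{cor:binom} directly from Corollary~\ref{cor:ibfs_stoc} by rewriting $d^-_{j+1}$ as a sum of independent Bernoulli indicators.

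The combinatorial core is the observation that a vertex $w$ lies in $N^-_{j+1}$ if and only if $w \notin N^-_{\le j}$ and $w$ sends at least one edge to $N^-_j$. For any such $w$, the condition $w \notin N^-_{\le j}$ forces $w$ to have no edges into $N^-_{\le j-1}$, since otherwise $\dist(w,v) \le j$. Consequently $E(w, N^-_{\le j}) = E(w, N^-_j)$ for every $w \in [n] \setminus N^-_{\le j}$, and we may write
\[
d^-_{j+1} = \sum_{w \in [n] \setminus N^-_{\le j}} \mathbf{1}\bigl[|E(w, N^-_{\le j})| \ge 1\bigr].
\]

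Next, I would invoke Corollary~\ref{cor:ibfs_stoc}: conditional on $(N^-_i, 0 \le i \le j)$ with $d^-_j = q$ and $d^-_{\le j} = p$, the variables $|E(w, N^-_{\le j})|$ for $w \in [n] \setminus N^-_{\le j}$ are mutually independent, each distributed as $\Bin(r, d^-_j/(n - d^-_{\le j-1})) = \Bin(r, q/(n-p+q))$. There are exactly $n - p$ such vertices, and each contributes an independent Bernoulli indicator with success probability $1 - (1 - q/(n-p+q))^r$; summing these indicators yields the first distributional identity for $d^-_{j+1}$.

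For the stochastic upper bound, the elementary inequality $\mathbf{1}[X \ge 1] \le X$ for non-negative integer $X$ gives
\[
d^-_{j+1} \le \sum_{w \in [n] \setminus N^-_{\le j}} |E(w, N^-_{\le j})|,
\]
and the right-hand side is a sum of $n-p$ independent $\Bin(r, q/(n-p+q))$ variables, hence distributed as $\Bin(r(n-p), q/(n-p+q))$. The argument is essentially formulaic once Corollary~\ref{cor:ibfs_stoc} is in hand; the only real subtlety is the collapse $E(w, N^-_{\le j}) = E(w, N^-_j)$ for $w$ outside the first $j$ in-layers, which is what allows the sum to decouple cleanly into independent Bernoullis indexed by $[n] \setminus N^-_{\le j}$.
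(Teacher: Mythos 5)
Your argument is correct and follows the paper's route essentially verbatim: for $w\notin N^-_{\le j}$ one observes $E(w,N^-_{\le j})=E(w,N^-_j)$, Corollary~\ref{cor:ibfs_stoc} gives the conditional law of $|E(w,N^-_{\le j})|$ as an independent $\Bin(r,\cdot)$ over the $n-p$ eligible vertices, summing the resulting Bernoulli indicators yields the distributional identity, and the inequality $\mathbf{1}[X\ge 1]\le X$ gives the domination. The paper phrases that last step abstractly (the number of nonempty columns in an $r\times m$ matrix of iid $\Ber(x)$'s is at most the total number of ones), but it is the identical coupling you use directly on the edge variables.

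One point you should not pass over silently: you compute the per-vertex success probability as $q/(n-p+q)$, which is indeed what Corollary~\ref{cor:ibfs_stoc} yields once one substitutes $d^-_{\le j-1}=p-q$. The corollary as stated, however, writes $q/(n-p)$ in the first distributional identity, and the paper's own proof repeats that slip. This appears to be a typo in the paper: the second claim of the corollary already carries the parameter $q/(n-p+q)$, and later invocations of the distributional identity (e.g.\ the random variable $X\eqdist\Bin\bigl(n-p,1-(1-\tfrac{q}{n-p+q})^r\bigr)$ in the proof of Lemma~\ref{lem:no_strange_things}) use $q/(n-p+q)$ as well. So your derived formula is the correct one, but you assert that it "yields the first distributional identity" without remarking that it does not literally match the printed statement; you should flag the discrepancy explicitly.
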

\begin{proof}
If $w \in [n] \setminus N^-_{\le j}$ then $E(w,N_{\le j}^-)=E(w,N_{j}^-)$.
By Corollary~\ref{cor:ibfs_stoc},
the number of edges from $w$ to $N^-_j$ then has conditional law
$\Bin(r,q/(n-p))$, so is non-zero with probability $1-(1-q/(n-p))^r$.
The distributional identity follows since $|[n] \setminus N^-_{\le j}|= n-d^-_{\le j} =n-p$.

Next, note that $\Bin(m,1-(1-x)^r)$ is stochastically dominated by $\Bin(rm,x)$. To
see this, observe that the former is the number of columns containing at least
one 1 in an $r\times m$ matrix whose entries are iid $\Ber(x)$ random
variables, while the latter is the law of the number of ones
in such a matrix. The pigeonhole principle then yields the second claim of the lemma.
\end{proof}
Recall that $C^-_i(D,v) = N^-(D,u_i^-) \setminus (R_i^- \cup S_i^-)$ is the set of vertices discovered at step $i$ of iBFS.
\begin{corollary}\label{cor:binom2}
Fix $i,q \in \{0,1,\ldots,n\}$. Conditioned on $|S_i^-(D,v)|=q$, we have
\[
|C^-_i(D,v)| \eqdist \Bin\left(n-i-q,1-\left(1-\frac{1}{n-i}\right)^r\right)\, .
\]
\end{corollary}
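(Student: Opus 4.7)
The plan is to mirror the proof of Lemma~\ref{lem:ibfs_stoc}, but now track only the edges from external vertices $w\not\in R_i^-\cup S_i^-$ to the single vertex $u_i^-$, rather than to the whole set $R_i^-\cup S_i^-$. First I would condition on an arbitrary realization of the iBFS history $((R_j^-,S_j^-),0\le j\le i)$ consistent with $|S_i^-|=q$. Since the conclusion depends only on $n,i,q,r$, it suffices to show that the conditional law of $|C_i^-(D,v)|$ given any such history is the stated Binomial; averaging over histories then gives the corollary.

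Second, from the definition $C_i^-(D,v)=N^-(D,u_i^-)\setminus(R_i^-\cup S_i^-)$, we have $|C_i^-(D,v)|=\sum_{w\in[n]\setminus(R_i^-\cup S_i^-)}\mathbf{1}[(w,u_i^-)\in E(D)]$, a sum of $n-i-q$ indicator terms (using $|R_i^-|=i$ and $|S_i^-|=q$). So I only need to compute the conditional probability that a given external $w$ has an edge to $u_i^-$, and then check mutual independence across distinct $w$.

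Third, I would appeal to the canonical construction from the introduction. Exactly as in the proof of Lemma~\ref{lem:ibfs_stoc}, the event $w\not\in R_i^-\cup S_i^-$, together with the given iBFS history, reveals nothing about $w$'s out-edges beyond the fact that $L_{w,p}\not\in R_i^-$ for every $p\in[r]$ (no out-edge of $w$ lands in the set of already-explored vertices; the history carries no information about edges from $w$ into $S_i^-$ or into the unexplored complement). Conditional on this, the $r$ heads $L_{w,1},\ldots,L_{w,r}$ remain mutually independent and uniformly distributed over the set $[n]\setminus R_i^-$, which has cardinality $n-i$. Hence the conditional probability that a fixed $L_{w,p}$ equals $u_i^-$ is $1/(n-i)$, and the conditional probability that at least one of the $r$ heads hits $u_i^-$ is $1-(1-1/(n-i))^r$.

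Finally, independence of the indicators $\mathbf{1}[(w,u_i^-)\in E(D)]$ across distinct $w\not\in R_i^-\cup S_i^-$ follows from the mutual independence across $w$ of the families $(L_{w,p})_{p\in[r]}$. Summing the resulting $n-i-q$ i.i.d.\ Bernoulli indicators delivers the claimed Binomial. I do not anticipate any real obstacle: the corollary is essentially a direct specialization of Lemma~\ref{lem:ibfs_stoc}, and once one recognizes that the iBFS history through step $i$ exposes only the in-edges of $R_i^-$, the computation factorizes cleanly over external vertices.
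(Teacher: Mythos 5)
Your proposal is correct and follows the same approach the paper intends; the paper explicitly omits this proof as being ``very similar to those given above.'' You specialize the argument of Lemma~\ref{lem:ibfs_stoc}: condition on the iBFS history, observe that for each undiscovered $w$ the heads $L_{w,1},\ldots,L_{w,r}$ remain iid uniform on $[n]\setminus R_i^-$ (a set of size $n-i$), compute the probability of hitting the single target $u_i^-$ as $1-(1-1/(n-i))^r$, and sum the $n-i-q$ independent indicators.
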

We omit the proof since it is very similar to those given above.

The next lemma formalizes the intuitively clear picture that it is unlikely for the early stages of iBFS to encounter a fixed, small
subgraph of $D$ not containing the starting vertex.
\begin{lemma}\label{lem:bfs_condition}
Fix a digraph $G$ with $V(G) \subset [n]$ and $v \not \in V(G)$. Fix $s \in \N$ and let $i_0 = \inf\{i: |R^-_i \cup S^-_i|> s\}$.
Then
\[
\Pr((R^-_{i_0-1} \cup S^-_{i_0-1})\cap V(G) \ne \emptyset \mid D[V(G)]=G) \le \frac{r(s+1)|V(G)|}{n-|V(G)|-s}\, .
\]
\end{lemma}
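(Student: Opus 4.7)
The plan is to do a union bound over the BFS steps at which $V(G)$ could first be encountered, combined with a conditional analysis exploiting the canonical construction of $D(n,r)$.

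Let $\tau=\min\{j:C_j^-(D,v)\cap V(G)\neq\emptyset\}$, with $\tau=\infty$ if no such $j$ exists. Since $v\notin V(G)$ and $R_j^-\cup S_j^-=\{v\}\cup\bigcup_{i<j}C_i^-$, the event to bound is precisely $\{\tau\le i_0-2\}$. On this event, step $\tau$ adds at least one new vertex of $V(G)$ to $R\cup S$, so $|R_{\tau+1}^-\cup S_{\tau+1}^-|\ge|R_\tau^-\cup S_\tau^-|+1\ge\tau+2$; combined with $|R_{\tau+1}^-\cup S_{\tau+1}^-|\le s$ this forces $\tau\in\{0,\ldots,s-2\}$.

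For each such $j$ I bound $\Pr(\tau=j\mid D[V(G)]=G)$ by conditioning on the iBFS filtration $\cF_j$. The event $\{\tau\ge j\}$ is $\cF_j$-measurable and equal to $\{V(G)\cap(R_j^-\cup S_j^-)=\emptyset\}$; on this event $u_j^-$ is determined by $\cF_j$ and lies in $[n]\setminus V(G)\setminus\{u_0^-,\ldots,u_{j-1}^-\}$, a set of size $n-|V(G)|-j$. The key distributional claim is that on $\{\tau\ge j\}$, conditional on $\cF_j$ and $D[V(G)]=G$, each head $L_{w,k}$ with $w\in V(G)$ and $L_{w,k}\notin V(G)$ is uniform over $[n]\setminus V(G)\setminus\{u_0^-,\ldots,u_{j-1}^-\}$, jointly independent across $(w,k)$. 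The justification is that under $D[V(G)]=G$ alone such heads are iid uniform over $[n]\setminus V(G)$; the restriction $\tau\ge j$ excludes the values $u_0^-,\ldots,u_{j-1}^-$; and $\cF_j$ is a function of the in-edge counts at already-processed vertices, which contributes no further bias on the out-edges of $V(G)$.

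Granting the claim, for each $w\in V(G)$ the probability of sending an edge to $u_j^-$ is at most $r/(n-|V(G)|-j)$, so a union bound over $w$ gives
\[
\Pr(C_j^-\cap V(G)\neq\emptyset\mid\cF_j,D[V(G)]=G)\le\frac{r|V(G)|}{n-|V(G)|-j}\quad\text{on }\{\tau\ge j\}.
\]
Summing this bound over $j=0,\ldots,s-2$ and taking expectations (conditional on $D[V(G)]=G$) yields the total bound $(s-1)r|V(G)|/(n-|V(G)|-s)\le r(s+1)|V(G)|/(n-|V(G)|-s)$, as required.

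The main obstacle is to verify the distributional claim rigorously, namely that $\cF_j$ provides no information about the out-edges of $V(G)$ beyond what is encoded by $\tau\ge j$. Once that is settled, the remainder is routine bookkeeping.
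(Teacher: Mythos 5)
Your proof is correct and takes essentially the same approach as the paper's: a union bound over the early iBFS steps at which $V(G)$ could first be hit, combined with the observation that conditional on $D[V(G)]=G$ and the iBFS history up to step $j$ (together with the event that $V(G)$ has not yet been discovered), the free out-edges from $V(G)$ remain uniform over $[n]\setminus V(G)\setminus R_j^-$, giving a per-step hitting probability at most $r|V(G)|/(n-|V(G)|-s)$. Your indexing via $C_j^-$ yields $s-1$ rather than $s+1$ terms and your conditioning set $[n]\setminus V(G)\setminus\{u_0^-,\dots,u_{j-1}^-\}$ is stated slightly more precisely than the paper's (which also subtracts $S_{i-1}^-$, harmlessly overcounting the constraint), but these are cosmetic differences and the substance of the argument is identical.
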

\begin{proof}
Since $|R^-_i|=i$, we clearly have $i_0 \le s+1$. Writing $\tau=\inf \{t: (R^-_i \cup S^-_i) \cap V(G) \ne \emptyset\}$,
the probability we aim to bound is thus at most
\begin{align*}
\Pr(\tau < i_0\mid  D[V(G)]=G) & = \sum_{i=0}^{s} \Pr(\tau= i,i_0 > i\mid D[V(G)]=G) \\
						& \le \sum_{i=0}^s \Pr(\tau=i\mid i_0 > i, \tau \ge i, D[V(G)]=G)
\end{align*}
Given that $D[V(G)]=G$, there are $r|V(G)|-|E(G)|$ edges from $V(G)$ to $[n]\setminus V(G)$; the heads of such edges are uniformly distributed over $[n]\setminus V(G)$.
For $i > 0$, given that $(R^-_{i-1} \cup S^-_{i-1}) \cap V(G)=\emptyset$, the heads of these edges are uniformly distributed over $[n]\setminus (V(G) \cup R^-_{i-1}\cup S^-_{i-1})$.
Thus,
\begin{align*}
&\Pr((R^-_{i} \cup S^-_{i}) \cap V(G)\ne \emptyset \mid R^-_{i-1},S^-_{i-1},D[V(G)]=G,(R^-_{i-1} \cup S^-_{i-1}) \cap V(G)=\emptyset) \\
\le& \frac{r|V(G)|}{n-|V(G)|-|R^-_{i-1} \cup S^-_{i-1}|}\, ,
\end{align*}
since, in this situation, the only way that $(R^-_{i} \cup S^-_{i}) \cap V(G)\neq \emptyset$ is in the case that some edge with tail in $V(G)$ has as a head the vertex $u^-_{i-1}$.

Given that $i_0 > i$ and $\tau \ge i$ we indeed have $(R^-_{i-1} \cup S^-_{i-1}) \cap V(G)=\emptyset$, and also have $|R^-_i \cup S^-_i| \le s$,
so $\Pr(\tau=i\mid i_0 > i, \tau \ge i, D[V(G)]=G) \le r|V(G)|/(n-|V(G)|-s)$. Using this bound in the above sum, the result follows.
\end{proof}

Finally, we require the following, rather simple result for oBFS.
\begin{lemma}\label{lem:obfs_stoc}
Fix $m \in \N_0$. Conditional on $R^+_m$ and $S^+_m$, independently for all $w \in [n]\setminus R^+_m$ we have
\[|E(w,R^+_m \cup S^+_m)| \preceq \Bin(r,(|R^+_m|+|S^+_m|)/n) \preceq \Bin(r,(rm+1)/n)\, .\]
\end{lemma}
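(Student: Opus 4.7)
The plan is to exploit the fact that outward BFS reveals only the out-edges of explored vertices, leaving every other out-edge of the digraph untouched. In the canonical construction of $D(n,r)$ from iid uniform variables $(L_{i,j} : (i,j) \in [n]\times[r])$, the pair $(R^+_m, S^+_m)$ is measurable with respect to $\sigma(L_{u,j} : u \in R^+_m,\; j \in [r])$: once the out-edges of vertices in $R^+_m$ are known in their order of exploration, all subsequent oBFS steps are deterministic. Hence for each $w \in [n] \setminus R^+_m$ the out-edges $(L_{w,j} : j \in [r])$ are independent of $(R^+_m, S^+_m)$, and for distinct $w, w' \in [n] \setminus R^+_m$ these out-edge families are also mutually independent.

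Combining this with the disjointness of $R^+_m$ and $S^+_m$, I would conclude that conditional on $(R^+_m, S^+_m)$, each of the $r$ out-edges of $w$ is independently uniform on $[n]$ and lands in $R^+_m \cup S^+_m$ with probability $(|R^+_m|+|S^+_m|)/n$. This gives $|E(w, R^+_m \cup S^+_m)| \eqdist \Bin(r, (|R^+_m|+|S^+_m|)/n)$ jointly independently over $w \in [n] \setminus R^+_m$, which is the first stochastic domination (in fact an equality in distribution, but the weaker form is all that will be needed downstream).

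For the second inequality, by monotonicity of the Binomial in the success parameter it suffices to prove the deterministic bound $|R^+_m| + |S^+_m| \le rm + 1$. A simple induction on $i$ suffices: initially $|R^+_0|+|S^+_0| = 1$; at each step $i$ with $|S^+_i| > 0$, the front of the queue is removed (so $|R^+|$ grows by $1$) and at most $r$ new vertices are appended to the tail (so $|S^+|$ grows by at most $r-1$), while if $|S^+_i| = 0$ the sum is unchanged. Since oBFS runs for at most $m$ steps, one gets $|R^+_m| + |S^+_m| \le 1 + rm$. No step presents a genuine obstacle here; the only point requiring care is clearly articulating that $(R^+_m, S^+_m)$ is a function solely of the out-edges emanating from $R^+_m$, after which independence, the conditional Binomial distribution, and thus both dominations all follow immediately.
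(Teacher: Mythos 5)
Your proof is correct and takes essentially the same route the paper has in mind: it appeals to the canonical construction via the $L_{i,j}$'s, observes that oBFS reveals only the out-edges of explored vertices so the out-edges of any $w \notin R^+_m$ remain iid uniform conditionally on $(R^+_m, S^+_m)$, and then counts $|R^+_m|+|S^+_m| \le rm+1$ by induction. The paper omits the first inequality, pointing to the proof of Lemma~\ref{lem:ibfs_stoc}; your argument is the (simpler) oBFS analogue of that argument, simpler precisely because unexplored vertices' out-edges carry no conditioning bias in the outward search, so you even get equality in distribution rather than mere domination. The one-line counting argument for the second inequality matches the paper's.
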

\begin{proof}
We omit the proof of the first inequality, which parallels that of Lemma~\ref{lem:ibfs_stoc}.
For the second, note that $|R^+_m \cup S^+_m| \le rm+1$ since $D$ is $r$-out regular.
\end{proof}

\section{In-neighbourhoods: technical lemmas}\label{sec:lemmas}
In this section we gather a few basic estimates that describe the size and structure of in-neighbourhoods of vertices in $D=D(n,r)$.

The following result controls the growth of the in-neighbourhoods in $D(n,r)$.
\begin{proposition}\label{prop:UB_in_neighs}
For all $\alpha>0$,
$$
\Pr\left(\exists v\in [n],\; \exists j\geq 0 \text{ such that } d^-_j(v) \geq (r+\alpha)^j \log_r^2{n}\right) =O(n^{-4})\;.
$$
\end{proposition}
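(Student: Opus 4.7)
The plan is a level-by-level induction: show that with overwhelming probability the in-neighbourhood sizes grow by a factor of at most $r+\alpha$ per level, starting from the slack $\log_r^2 n$ at the root. Fix $v\in[n]$ and write $B_j=(r+\alpha)^j\log_r^2 n$. Since $d_j^-(v)\le n<B_j$ whenever $j>J^\star:=\lceil\log_{r+\alpha}(n/\log_r^2 n)\rceil=O(\log n)$, only levels $0\le j\le J^\star$ need to be examined, and the base case $d_0^-(v)=1\le B_0$ is trivial.

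For the inductive step, condition on $d_j^-(v)=q$ and $d_{\le j}^-(v)=p$, where by induction $q\le B_j$. Corollary~\ref{cor:binom} supplies the conditional law $d_{j+1}^-(v)\eqdist\Bin(n-p,\,1-(1-q/(n-p))^r)$, whose mean $\mu$ satisfies $\mu\le rq\le rB_j$ by Bernoulli's inequality $(1-x)^r\ge 1-rx$. Writing $B_{j+1}-\mu\ge \alpha B_j$ and applying Chernoff's inequality~\eqref{eq:cher1},
\[
\Pr\bigl(d_{j+1}^-(v)\ge B_{j+1}\,\bigm|\,d_j^-(v)=q,\,d_{\le j}^-(v)=p\bigr)\;\le\;\exp\!\left(-\frac{\alpha^2\,B_j}{2(r+\alpha)}\right),
\]
which, since $B_j\ge B_0=(\log n)^2/(\log r)^2$, is smaller than any fixed negative power of $n$ once $n$ is large enough. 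Crucially, this bound does \emph{not} depend on the conditioning beyond the cap $q\le B_j$, so it passes unchanged through a conditioning on any history of in-neighbourhood sizes satisfying $d_i^-(v)\le B_i$ for $i\le j$.

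A standard telescoping yields
\[
\Pr\bigl(\exists\,j\le J^\star:\,d_j^-(v)>B_j\bigr)\;\le\;\sum_{j=0}^{J^\star}\Pr\bigl(d_j^-(v)>B_j\,\bigm|\,d_i^-(v)\le B_i\text{ for all }i<j\bigr)\;\le\;(J^\star+1)\cdot\exp\!\left(-\tfrac{\alpha^2 B_0}{2(r+\alpha)}\right),
\]
which is $o(n^{-5})$. A final union bound over the $n$ choices of $v$ gives the desired $O(n^{-4})$ estimate (in fact much better). The main, and essentially only, point to be careful about is that the Chernoff estimate be uniform in the conditioning: this is guaranteed because Corollary~\ref{cor:binom} expresses the conditional distribution of $d_{j+1}^-(v)$ purely in terms of $(q,p)$, and the resulting tail bound depends on the conditioning only through the harmless inequality $\mu\le rq\le rB_j$.
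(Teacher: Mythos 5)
Your proof is correct and follows essentially the same route as the paper: both arguments peel off one level at a time via $\Pr(\bigcup_j\{d_j^-\ge B_j\})\le\sum_j\Pr(d_j^-\ge B_j\mid d_i^-<B_i\ \forall i<j)$, condition on $(d_{j-1}^-,d_{\le j-1}^-)$ through Corollary~\ref{cor:binom}, bound the conditional mean by $r\cdot d_{j-1}^-$, and finish with Chernoff plus union bounds over $j$ and $v$. The only cosmetic differences are that you use the exact binomial law from Corollary~\ref{cor:binom} together with Bernoulli's inequality (the paper uses the stochastic-domination variant $\Bin(r(n-p),q/(n-p+q))$), you explicitly truncate to $j\le J^\star=O(\log n)$ rather than summing over $j\in[n]$, and your Chernoff denominator $2(r+\alpha)$ is a hair weaker than the paper's $2(r+\alpha/3)$; none of this affects the $O(n^{-4})$ conclusion.
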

\begin{proof}
Fix $v\in [n]$. We prove that
\begin{align}\label{eq:prob1}
\Pr\left(\exists j\geq 0 \text{ such that } d^-_j(v) \geq (r+\alpha)^j \log_r^2{n}\right) &=O(n^{-5})\, ;
\end{align}
a union bound over $v\in [n]$ then proves the proposition.

For $j\geq 0$, let $E_j = \{d^-_{j}(v)< (r+\alpha)^{j}\log_r^2{n}\}$. Then
\begin{align}\label{eq:prob2}
\Pr\left(\exists j\geq 0 \text{ such that } d^-_j(v) \geq (r+\alpha)^j \log_r^2{n}\right)  =
\Pr\left(\cup_{j=1}^n \overline{E_j}\right) \leq \sum_{j=1}^n \Pr(\overline{E_j}\mid \cap_{j'<j} E_{j'})\;.
\end{align}

By Corollary~\ref{cor:binom}, for every $p\geq q$ and every $a\geq 0$,
\begin{equation*}
\Pr(d^-_{j}(v)\geq a\mid d^-_{j-1}(v)=q, d^-_{\leq j-1}(v)=p) \leq \Pr\left(\Bin\left(r(n-p),\frac{q}{n-p+q}\right)\geq a\right) \enspace.
\end{equation*}
Note that $r(n-p)\cdot\frac{q}{n-p+q}\leq rq$. Set $a=(r+\alpha)^{j}\log_r^2{n}$, and observe that if $E_{j-1}$ occurs then $d^-_{j-1}(v)=q \leq q_0:=(r+\alpha)^{j-1}\log_r^2{n}$.
Finally, for such $q$ we have $a = (r+\alpha) q_0 \ge (r+\alpha)q$, so
\begin{align*}
\Pr(\overline{E_j}\mid \cap_{j'<j} E_{j'}) &= \Pr(d^-_{j}(v)\geq a\mid \cap_{j'<j} E_{j'}) \\
&\leq \sup_{q\leq q_0,p} \Pr(d^-_{j}(v)\geq a\mid d^-_{j-1}(v)=q, d^-_{\leq j-1}(v)=p)\\
&\le \sup_{q\leq q_0,p} \Pr(d^-_{j}(v)\geq rq+\alpha q\mid d^-_{j-1}(v)=q, d^-_{\leq j-1}(v)=p)\\
&\leq \sup_{q\leq q_0} e^{-\frac{\alpha^2 q^2}{2(r+\alpha/3)q}} = e^{-\Omega(\log_r^2{n})}=O(n^{-6}) \enspace,
\end{align*}
where we used the Chernoff bound~\eqref{eq:cher1}.
Using this bound in~\eqref{eq:prob2} proves~\eqref{eq:prob1}.
%
\end{proof}

The next lemma controls the probability that the sequence $(d_k^-(v),k \ge 1)$ exhibits a large decrease in value for relatively small values of $k$.
\begin{lemma}\label{lem:no_strange_things}
Fix $v\in [n]$. Uniformly in $k\leq 0.99\log_r{n}$ and $\omega\geq \log_r^3{n}$, we have
\[\Pr(d^-_{\leq k}(v)\geq \omega^2,\,  d^-_{k}(v)\leq  \omega )=O(n^{-3}).
\]
\end{lemma}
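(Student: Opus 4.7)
My plan is to show that the event $\mathcal{B} := \{d^-_{\le k}(v) \ge \omega^2,\ d^-_k(v) \le \omega\}$ forces a substantial decrease in generation size at some step $j < k$, which is exponentially unlikely by Chernoff. For brevity write $q_j := d^-_j(v)$ and $p_j := d^-_{\le j}(v)$.

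On $\mathcal{B}$, pigeonhole gives $\max_{j \le k} q_j \ge \omega^2/(k+1) \ge 2\omega$, the last inequality using $\omega \ge \log_r^3 n$ and $k \le 0.99 \log_r n$. Fix any $j_0 \le k$ with $q_{j_0} \ge 2\omega$; since $q_k \le \omega$ we necessarily have $j_0 < k$. If it were the case that $q_{j+1} > (r-1/2) q_j$ for every $j_0 \le j < k$ with $q_j \ge \omega$, then an easy induction (using $r - 1/2 \ge 3/2 > 1$) would give $q_k > q_{j_0} \ge 2\omega$, contradicting $q_k \le \omega$. Therefore $\mathcal{B}$ entails at least one of the ``bad'' events $B_j := \{q_j \ge \omega,\ q_{j+1} \le (r-1/2) q_j\}$ for some $0 \le j < k$.

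To bound $\Pr(B_j)$, I first restrict to $\mathcal{E} := \{p_j \le n^{0.999}\text{ for all }j \le k\}$. Choosing $\alpha > 0$ small enough that $0.99 \log_r(r+\alpha) < 0.999$, Proposition~\ref{prop:UB_in_neighs} (applied to the fixed vertex $v$) yields $\Pr(\overline{\mathcal{E}}) = O(n^{-5})$. On $\mathcal{E}$, Corollary~\ref{cor:binom} gives $q_{j+1} \eqdist \Bin(n-p,\,1-(1-q/(n-p))^r)$ with $q/(n-p) = O(n^{-0.001})$; a Taylor expansion of $(1-x)^r$ then yields $\E[q_{j+1}\mid q_j=q,p_j=p] \ge rq(1 - O(n^{-0.001})) \ge (r-1/4)q$ for $n$ large, so Chernoff's bound~\eqref{eq:cher1b} with deviation $q/4$ produces
\[
\Pr\bigl(q_{j+1} \le (r-1/2)q \bigm| q_j = q,\, p_j = p\bigr) \le \exp(-q/(32r)) \le \exp(-\omega/(32r)) \le n^{-4},
\]
uniformly over $q \ge \omega \ge \log_r^3 n$ and $p \le n^{0.999}$. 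Summing over $j < k$ (at most $\log_r n + 1$ terms) and combining with the bound on $\Pr(\overline{\mathcal{E}})$ yields $\Pr(\mathcal{B}) \le \Pr(\overline{\mathcal{E}}) + \sum_{j<k} \Pr(B_j \cap \mathcal{E}) = O(n^{-3})$, as required.

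The one delicate point is ensuring that $\E[q_{j+1}\mid q_j,p_j]$ remains close to $rq$, which is why one must restrict to the ``non-saturated'' regime $\mathcal{E}$ using Proposition~\ref{prop:UB_in_neighs}; once that is in place, the remaining argument is a routine Chernoff estimate coupled with a union bound over $j$.
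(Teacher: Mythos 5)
Your proof is correct and follows essentially the same strategy as the paper's: restrict to the good event furnished by Proposition~\ref{prop:UB_in_neighs} (so that all the $d^-_{\le j}(v)$ are sub-polynomial), argue combinatorially that $\{d^-_{\le k}(v)\ge\omega^2,\ d^-_k(v)\le\omega\}$ forces a growth-anomaly at some intermediate step, and bound that anomaly via Corollary~\ref{cor:binom} and Chernoff, finishing with a union bound over steps. The minor differences are cosmetic: the paper locates the anomaly as a literal \emph{decrease} $d^-_{\ell+1}(v)\le d^-_\ell(v)$ via two stopping times $\tau$ and $\sigma$ (leading to a double sum and a final $O(k^2 n^{-4})$), whereas you locate a weaker ``slow-growth'' step $q_{j+1}\le(r-1/2)q_j$ via pigeonhole on the peak, giving a cleaner single sum $O(k n^{-4})$; both comfortably yield $O(n^{-3})$.
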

\begin{proof}
Fix $k$ and $\omega$ as above, and $v \in [n]$.
Let $\tau=\min\{j:\; d^-_{j}(v)\geq \omega^2/k\}$. If $d^-_{\leq k}(v)\geq \omega^2$ and $d^-_{k}(v)\leq \omega$, then $\tau< k$, so
\begin{align}
\Pr(d^-_{\leq k}(v)\geq \omega^2, d^-_{k}(v)\leq  \omega)
&= \Pr(d^-_{\leq k}(v)\geq \omega^2,d^-_{k}(v)\leq \omega,\tau < k )\nonumber\\
&= \sum_{j=1}^{k-1} \Pr(d^-_{\leq k}(v)\geq \omega^2, d^-_{k}(v)\leq \omega, \tau =j )\nonumber\\
&\leq \sum_{j=1}^{k-1} \Pr(d^-_{j}(v)\geq \omega^2/k, d^-_{k}(v)\leq  \omega)\;.\label{eq:strange_1}
\end{align}
Let $\sigma=\min \{i \ge \tau:\; d^-_{i+1}(v)\leq d^-_{i}(v)\}$. For any $j< k$, if $d^-_{j}(v)\geq \omega^2/k$ and $d^-_{k}(v)\leq  \omega$, then $\sigma< k$, so
\begin{align}
\Pr(d^-_{j}(v)\geq \omega^2/k , d^-_{k}(v)\leq  \omega)&\leq \Pr(d^-_{j}(v)\geq \omega^2/k,\; \sigma< k)\;.\label{eq:strange_3}
\end{align}
Now fix $\alpha>0$ small enough that $(r+\alpha)^k \log_r^2 n < n^{0.999}$ for $n$ large; this is possible by our choice of $k$.
Also, for $\ell \in \N$ let $E_{\ell} = \{\forall i \le \ell, d^-_i(v) < (r+\alpha)^i \log_r^2{n}\}$, and let
$E = \bigcap_{\ell \ge 1} E_{\ell}$.
By Proposition~\ref{prop:UB_in_neighs}, we have $\Pr(\overline{E})=O(n^{-4})$, so for all $1\leq j\leq k-1$,
\begin{align}
\Pr(d^-_{j}(v)\geq \omega^2/k,\; \sigma< k)&\leq  \Pr(\overline{E})+ \Pr(d^-_{j}(v)\geq  \omega^2/k,\; \sigma< k,\; E)\nonumber\\
 &\leq  O(n^{-4})+ \sum_{\ell=j}^{k-1}\Pr(d^-_{j}(v)\geq  \omega^2/k,\; \sigma=\ell,\; E)\nonumber\\
 &\leq  O(n^{-4})+ \sum_{\ell=j}^{k-1}\Pr(d^-_{\ell}(v)\geq  \omega^2/k,\; d^-_{\ell+1}(v)\leq d^-_{\ell}(v),\; E_{\ell})\nonumber\\
 &\leq  O(n^{-4})+\sum_{\ell=j}^{k-1}\Pr(d^-_{\ell+1}(v)\leq d^-_{\ell}(v)\mid d^-_{\ell}(v)\geq \omega^2/k,\; E_{\ell})\;.
 \label{eq:strange_2}
\end{align}
On $E_{\ell}$ we have $d^-_{\leq \ell}(v)< (r+\alpha)^{\ell+1}\log_r^2{n} \le (r+\alpha)^{k}\log_r^2{n}< n^{0.999}$.

Now fix $0 < q \le p \le n^{0.999}$ and let $X$ be distributed as $\Bin\left(n-p,1-\left(1-\frac{q}{n-p+q}\right)^r\right)$.
Using that
$(1-x)^i \le 1-ix +{i \choose 2} x^2$ for $x \in (0,1)$ and $i \in \N$, for we have
\begin{align*}
\E(X)
 &\ge (n-p) \left(\frac{rq}{n-p+q}-\frac{\binom{r}{2}q^2}{n-p+q}\right)
 = rq \cdot \frac{(n-p)(n-p-\frac{r-3}{2}q)}{(n-p+q)^2} \\
 &> rq \cdot \left( 1- \frac{3rp}{n}\right) > \frac{2rq}{3}\, ,
\end{align*}
the last inequality for $n$ large since $p\leq  n^{0.999}$.
Now write $q_0 = \omega^2/k$ and $p_0 = (r+\alpha)^{\ell+1} \log_r n < n^{0.999}$.
By Corollary~\ref{cor:binom} and the Chernoff bound~\eqref{eq:cher1b}, we have
\begin{align*}
&\Pr(d^-_{\ell+1}(v)\leq d^-_{\ell}(v)\mid d^-_{\ell}(v)\geq \omega^2/k,\; E_{\ell})\\
\leq& \sup_{q_0 \le q \le p \le p_0} \Pr(d^-_{\ell+1}(v)\leq d^-_{\ell}(v)\mid d^-_{\ell}(v)=q, d^-_{\leq \ell}(v)=p)\\
\leq& \sup_{q_0 \le q \le p \le p_0} \Pr(d^-_{\ell+1}(v)\leq q \mid d^-_{\ell}(v)=q, d^-_{\leq \ell}(v)=p)\\
\leq& \sup_{q_0 \le q \le p \le p_0} \Pr\left(X\leq q\right)\\
\leq& \sup_{q_0 \le q \le p \le p_0} \Pr\left(X \leq \E(X)-(2r/3-1)q\right)\\
\leq& \sup_{q_0 \le q \le p \le p_0} e^{-\frac{(2r/3-1)^2q^2}{2(rq+(2r/3-1)q/3)}} \le e^{-\frac{q_0}{18r+2}}\, ,
\end{align*}
in the last line using that $r \ge 2$. Finally, $q_0 = \omega^2/k \ge \log_r^2 n$, so $e^{-q_0/(18r+2)} = O(n^{-4})$.
Combining the preceding inequality with~\eqref{eq:strange_1},~\eqref{eq:strange_3} and~\eqref{eq:strange_2} yields
\begin{align*}
\Pr(d^-_{\leq k}(v)\geq \omega^2, d^-_{k}(v)\leq  \omega)
&\leq  \sum_{j=1}^{k-1}  \sum_{\ell=j}^{k-1} \Pr(d^-_{\ell+1}(v)\leq d^-_{\ell}(v)\mid d^-_{\ell}(v)\geq  \omega^2/k,\; E) + O(k n^{-4}) \\
&\leq  O(k^2n^{-4}) = O(n^{-3})\, . \qedhere
\end{align*}
\end{proof}

The next lemma compares the law of $T^-(v)$ to that of a Poisson$(r)$ Galton-Watson tree.
A similar result, in the setting of undirected graphs, can be found in~\citep[Lemma 2.2]{riordan}.
\begin{lemma}\label{lem:tree_like}
Let $r\geq 2$ and let $T'$ be a $v$-rooted plane directed tree where all edges point to
the root. Suppose that $|V(T')|\leq n/2$. Then, for any $k \geq 0$ we have
\begin{equation*}
\Pr(T_{\leq k}^-(D,v) \cong T')= e^{O(|V(T')|^2/n)}\Pr(\cT_{\leq k} \cong T')\;.
\end{equation*}
where $\cT$ is a Galton-Watson branching tree whose offspring is Poisson with parameter $r$.
\end{lemma}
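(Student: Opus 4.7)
The plan is to express both $\Pr(T^-_{\le k}(D,v)\cong T')$ and $\Pr(\cT_{\le k}\cong T')$ as explicit products indexed by the internal vertices of $T'$, then estimate each factor of the ratio. Enumerate the vertices of $T'$ in canonical BFS order as $u_0=v,u_1,\ldots,u_{t-1}$ where $t=|V(T')|$, set $c_i=|N^-_{T'}(u_i)|$, and let $s = |\{i : \dist_{T'}(u_i,v) < k\}|$ be the number of ``internal'' vertices. The iBFS recursion $|S^-_{i+1}|=|S^-_i|-1+|C^-_i|$ shows that whenever iBFS produces $T'_{\le i}$ in its first $i$ steps, the queue size is the deterministic quantity $q_i := 1+\sum_{j<i}(c_j-1)$. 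In particular $i+q_i = 1+\sum_{j<i}c_j \le t$, so $n_i := n-i-q_i \ge n-t\ge n/2$.

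The event $\{T^-_{\le k}(D,v)\cong T'\}$ is exactly $\bigcap_{i<s}\{|C^-_i|=c_i\}$, and Corollary~\ref{cor:binom2} identifies the conditional law of $|C^-_i|$ given $|S^-_i|=q_i$ as $\Bin(n_i, p_i)$, where $p_i := 1-(1-1/(n-i))^r$; by symmetry among the unseen vertices, this is also its conditional law given the full event $\{T^-(D,v,i)\cong T'_{\le i}\}$. Sequential conditioning then gives
\[
\Pr\bigl(T^-_{\le k}(D,v)\cong T'\bigr) = \prod_{i=0}^{s-1}\binom{n_i}{c_i}p_i^{c_i}(1-p_i)^{n_i-c_i},
\]
while independence of offspring in the Galton-Watson tree yields $\Pr(\cT_{\le k}\cong T') = \prod_{i<s} r^{c_i}e^{-r}/c_i!$.

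To compare, I bound the log of each factor of the ratio. Since $n_i \ge n/2$, Taylor expansion gives $n_ip_i = r + O((q_i+1)/n)$, $\prod_{j<c_i}(1-j/n_i) = \exp(O(c_i^2/n_i))$, and $(n_i-c_i)\log(1-p_i) = -n_ip_i + c_ip_i + O(n_ip_i^2)$; combining,
\[
\log\frac{\binom{n_i}{c_i}p_i^{c_i}(1-p_i)^{n_i-c_i}}{r^{c_i}e^{-r}/c_i!} = c_i\log(n_ip_i/r) + (r-n_ip_i) + O\!\left(\tfrac{c_i^2}{n_i}+c_ip_i+n_ip_i^2\right) = O\!\left(\tfrac{c_i^2+c_iq_i+c_i+q_i+1}{n}\right).
\]
Summing over $0\le i<s$ and using $\sum_i c_i \le t$, $q_i \le t$, and $s\le t$ (so that $\sum c_i^2 \le (\sum c_i)^2 \le t^2$ and $\sum c_iq_i \le t^2$), the total is $O(t^2/n)$, as required.

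The main obstacle is the term-by-term Binomial-vs-Poisson comparison, which requires simultaneously controlling three sources of error: the finite-population correction $c_i^2/n_i$, the drift $r - n_ip_i$ of the rate parameter, and the higher-order terms $n_i p_i^2$ in $\log(1-p_i)$. The final $O(t^2/n)$ bound hinges on the inequalities $\sum c_i^2 \le t^2$ and $\sum c_iq_i \le t^2$, both of which hold precisely because $c_i$ and $q_i$ are each bounded by $t$.
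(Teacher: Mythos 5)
Your proposal is correct and follows essentially the same route as the paper: both proofs encode $T'$ by the child-count sequence $(c_i)$ in BFS order, express $\{T^-_{\le k}(D,v)\cong T'\}$ as the intersection $\bigcap_{i<s}\{|C^-_i|=c_i\}$, invoke Corollary~\ref{cor:binom2} for the exact conditional Binomial law of $|C^-_i|$, and then compare each Binomial factor to the corresponding Poisson factor, getting a per-term multiplicative error of $e^{O((c_i^2+q_i)/n_i)}$ (the paper) or the equivalent $e^{O((c_i^2+c_iq_i+c_i+q_i+1)/n)}$ (yours), which sums to $e^{O(t^2/n)}$. Your Taylor-expansion presentation of the per-term ratio is a bit more explicit than the paper's, but the underlying estimates and the final bookkeeping ($\sum c_i\le t$, $q_i\le t$, $s\le t$) are the same.
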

\begin{proof}
Fix $k \in \N_0$ and a plane tree $T'$ of height at most $k$, and write $t=|V(T')|$. Recall the canonical labelling of $V(T')$ with labels from $\emptyset \cup \bigcup_{i \ge 1} \N^i$ introduced in Section~\ref{sub:trees}. Consider the iBFS procedure on $T'$ started at its root $v$. To make sense of this, we must specify the order in which the children of a vertex $u$ are added to the set of discovered vertices. We use the left-to-right order: so if $u$ is explored at step $i$ (i.e. $u_i^-(T',v)=u$) then the rightmost child of $u$ is the last element of $S_i^-(T',v)$.

Let $a_i = |C_i^-(T',v)|$ be the number of children of $u_i^-(T',v)$, and let $s = |V(T'_{\leq k-1})|$ be the number of vertices of $T'$ at distance at most $k-1$ from the root. In order to check if $T_{\leq k}(D,v)$ and $T'$ are isomorphic, it suffices to perform $s$ steps of the iBFS exploration from $v$ in $D$. We then have
\begin{align*}
\Pr(T_{\leq k}^-(D,v) \cong T') &= \Pr\left(|C^-_i(D,v)|=a_{i}, 0 \le i <s \right)\\
&= \prod_{i=0}^{s-1} \Pr\left(|C^-_i(D,v)|=a_{i}\mid |C^-_j(D,v)|=a_{j},0 \le j < i \right)\\
&= \prod_{i=0}^{s-1} \Pr\left(|C^-_i(D,v)|=a_{i}\mid  |S_{i}^-(D,v)|=1+\sum_{j=0}^{i-1} (a_j-1) \right)\;,
\end{align*}
where the last line is due to the symmetry of the model.

Writing $q_i=1+\sum_{j=0}^{i-1} (a_j -1)$, by Corollary~\ref{cor:binom2} we then have
\begin{align*}
& \Pr(|C^-_i(D,v)|=a_{i} \mid |S_{i}^-(D,v)|=q_{i}) \\
=&  \binom{n-i -q_{i}}{a_i}
\left(1-\left(1-\frac{1}{n-i}\right)^r\right)^{a_i}
\left(1-\frac{1}{n-i}\right)^{r(n-q_{i}-a_i-i)}\\
=& e^{O\left(\frac{a_i(a_i+r)}{n-i-q_i}\right)} \frac{(n-i-q_{i})^{a_i}}{a_i!}
\left(\frac{r}{n-i}\right)^{a_i}
\left(1-\frac{1}{n-i}\right)^{r(n-q_{i}-a_i-i)}
\enspace.
\end{align*}

Now let $\cT$ be a Poisson$(r)$ Galton-Watson tree; write $\rho$ for the root of $\cT$. Build $\cT$ via iBFS starting from $\rho$. In this manner, we may couple $\cT$ with a sequence $(\xi_i,i \ge 0)$ of iid $\Po(r)$ random variables so that for $0 \le i < |V(\cT)|$ we have $|C^-_i(\cT,\rho)|=\xi_i$. It follows that
$$
\Pr(\cT_{\leq k} \cong T') = \Pr\left(\cap_{i=0}^{s-1} \; \xi_{i}=a_{i}\right) = \prod_{i=0}^{s-1} \Pr(\xi_{i}=a_{i})=
\prod_{i=0}^{s-1} e^{-r} \frac{r^{a_i}}{a_i!}\;.
$$

Using that $1+x\le e^x$, this gives
\begin{align*}
\frac{ \Pr(|C^-_i(D,v)|=a_{i} \mid |S_{i}^-(D,v)|=q_{i}) }{\Pr(\xi_i=a_i)} &=e^{O\left(\frac{a_i(a_i+r)}{n-i-q_i}\right)}
\frac{(n-i-q_{i})^{a_i}}{(n-i)^{a_i}}
\cdot \frac{\left(1-\frac{1}{n-i}\right)^{r(n-q_{i}-a_i-i)}}{e^{-r}}\\
&= e^{O\left(\frac{q_{i}+a^2_i}{n -i-q_i}\right)}\;.
\end{align*}
Since $i+q_{i} \leq  t \leq n/2$, $\sum_{i=0}^{s-1} a^2_i\leq t^2$, and $s \leq t$, we have
$$
\sum_{i=0}^{s-1} \frac{q_{i}+a^2_i}{n-i-q_i}\leq \sum_{i=0}^{s-1} \frac{ t+a_i^2}{n/2}=O\left(\frac{t^2}{n}\right)\;. $$
It follows that
\begin{equation*}
\frac{\Pr(T_{\leq k}^-(D,v) \cong T')}{\Pr(\cT_{\leq k} \cong T')} =
\prod_{i=0}^{s-1}
\frac{ \Pr(|C^-_i(D,v)|=a_{i} \mid |S_{i}^-(D,v)|=q_{i}) }{\Pr(\xi_i=a_i)}
=e^{O(t^2/n)}\, .\qedhere
\end{equation*}
\end{proof}

\begin{lemma}[\cite{riordan}, Lemma 2.1]\label{lem:riordan}
Let $\cT$ be a Poisson$(r)$ Galton-Watson tree. 
There exist constants $c,C>0$ such that for every $\omega\geq 2$ and $k\geq 1$ we have
\begin{equation*}
c\cdot \min\{(r(1-\lambda_{r}))^{k-k'},1\}\leq \Pr(0<|\cT_k|<\omega)\leq C
(r(1-\lambda_{r}))^{k-k'}\;,
\end{equation*}
where $k'=\lfloor\log_r \omega \rfloor$.
\end{lemma}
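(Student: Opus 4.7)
The plan is to use the classical duality for supercritical Galton--Watson trees. Let $q = 1-\lambda_r$ denote the extinction probability; from $1-\lambda_r = e^{-r\lambda_r}$ one checks that $f'(q) = rq = r(1-\lambda_r) < 1$, where $f(s) = e^{r(s-1)}$ is the offspring generating function. It is classical that $\cT$ conditioned on eventual extinction is itself a Galton--Watson tree $\hat{\cT}$ with $\Po(rq)$ offspring (the conjugate subcritical process; for Poisson the exponential tilting preserves the family). This splits the probability as
\[
\Pr(0 < |\cT_k| < \omega) \;=\; q\cdot \Pr(0 < |\hat{\cT}_k| < \omega) \;+\; \Pr(0 < |\cT_k| < \omega,\ \text{survives}).
\]

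The extinct part is easy: $\E|\hat{\cT}_k| = (rq)^k$, so Markov gives $\Pr(|\hat{\cT}_k| > 0) \le (rq)^k = O((rq)^{k-k'})$ (since $k' \ge 0$ and $rq \le 1$), with a matching lower bound via Yaglom's theorem once $\omega$ is larger than an absolute constant (the subcritical quasi-stationary distribution is supported on the positive integers, so the constraint $|\hat{\cT}_k| < \omega$ is vacuous for most of the relevant regime). For the survival part, the Kesten--Stigum theorem (with $L^2$ convergence, since Poisson offspring has finite variance) gives $W_k := |\cT_k|/r^k \to W$ almost surely and in $L^2$, with $\Pr(W=0)=q$ and $W>0$ on survival. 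An $L^2$ comparison between $W_k$ and $W$ then reduces the survival contribution to estimating $\Pr(0 < W < \omega/r^k)$, uniformly in $k \ge k'$.

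The heart of the argument is the Dubuc--Seneta left-tail asymptotic for $W$. The Laplace transform $\phi(s) = \E e^{-sW}$ satisfies the smoothing equation $\phi(rs) = f(\phi(s)) = \exp(r(\phi(s)-1))$ with $\phi(0)=1$ and $\phi(\infty)=q$; linearizing $f$ at its fixed point $q$ (where $f'(q) = rq$) and iterating the functional equation yield $\phi(s)-q \asymp s^{-\beta}$ as $s\to\infty$, with exponent $\beta = \log_r(1/(rq))$. A Tauberian theorem converts this into $\Pr(0 < W \le x) \asymp x^{\beta}$ as $x\to 0^+$. Setting $x = \omega/r^k$ and using $k' = \lfloor\log_r \omega\rfloor$ one has $(rq)^{k'} \asymp \omega^{-\beta}$, and a direct calculation (noting that $-\beta = \log_r(rq)$, so $r^{-\beta k} = (rq)^k$) gives
\[
(\omega/r^k)^{\beta} \;=\; \omega^{\beta}\,(rq)^k \;\asymp\; (rq)^{k-k'},
\]
which is the claimed bound. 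The main obstacle is establishing two-sided, $\omega$-uniform constants in the Dubuc--Seneta step; this is where the iterative Laplace-transform analysis has to be done carefully. Once it is in hand, the rest combines elementary subcritical estimates, the Kesten--Stigum $L^2$-convergence, and the arithmetic above.
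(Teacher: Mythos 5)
The paper itself does not prove this lemma; it is stated with a citation to Lemma 2.1 of \cite{riordan}, so there is no in-paper proof to compare your argument against. Your proposal is nonetheless a coherent, genuinely self-contained plan, and the skeleton is correct: the split into extinct and surviving contributions, the identification of the conjugate subcritical $\Po(r(1-\lambda_r))$ process governing the extinct part, and the arithmetic converting $(\omega/r^k)^{\beta}$ into $(r(1-\lambda_r))^{k-k'}$ with $\beta=\log_r\bigl(1/(r(1-\lambda_r))\bigr)$ all check out.

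Two steps are, however, genuinely incomplete. First, you reduce $\Pr(0<|\cT_k|<\omega,\ \text{survival})=\Pr(0<W_k<\omega/r^k,\ \text{survival})$ to $\Pr(0<W<\omega/r^k)$ by appeal to $L^2$ convergence of $W_k$ to $W$. But $L^2$ closeness does not transfer small-ball probabilities near zero: from $\E|W_k-W|=O(r^{-k/2})$, Markov gives only $\Pr(|W_k-W|>\omega/r^k)=O(r^{k/2}/\omega)$, which is vacuous in the regime of interest, so the two small-ball probabilities are not obviously comparable by this route. A rigorous version would have to exploit the branching identity $W=r^{-k}\sum_{i\le|\cT_k|}W^{(i)}$ (i.i.d.\ copies of $W$) and compare term by term, or sidestep $W$ entirely and work with $|\cT_k|$ and the iterated offspring generating function $f^{(k)}$ directly. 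Second, you invoke the Dubuc--Seneta left-tail estimate $\Pr(0<W\le x)\asymp x^{\beta}$ with two-sided constants uniform in $x$, correctly calling it ``the main obstacle'' but not establishing it. In particular, Karamata's Tauberian theorem needs the genuine asymptotic $\phi(s)-q\sim L(s)s^{-\beta}$ for slowly varying $L$, not merely the two-sided bound $\phi(s)-q\asymp s^{-\beta}$ that linearizing the Schr\"oder--Koenigs equation at the fixed point $q$ hands you directly; extracting the periodic-in-$\log_r s$ prefactor and then inverting it is exactly the deferred technical content of Dubuc's analysis. Until both gaps are closed, what you have is a plan rather than a proof.
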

Recall that the probability of survival in $\cT$ is $\Pr\left(\sum_{k\geq 0} |\cT_k|=\infty\right) \in (0,1)$. Essentially, the preceding lemma states that given the branching process survives for the first $k$ generations, the probability that $|\cT_k|< \omega$ decays exponentially in $k$ (provided that $\omega$ is small enough with respect to $k$).
The final and principal result of this section is to prove a corresponding bound with $d^-_k(v)$ in place of $|\cT_k|$.
\begin{proposition}\label{lem:towers}
For every $v\in [n]$, $k\leq 0.99 \log_r{n}$ and $\log^3_r{n} \leq \omega \leq n^{1/6}$
\begin{align*}
\Pr(0< d^-_k(v) < \omega)= (1+o(1))\Pr(0<|\cT_k|<\omega)+O(n^{-3})\;.
\end{align*}
\end{proposition}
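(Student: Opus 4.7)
The plan is to couple $T^-_{\leq k}(D,v)$ with a Poisson$(r)$ Galton-Watson tree via Lemma~\ref{lem:tree_like}, whose error $e^{O(|V(T')|^2/n)}$ is only $1+o(1)$ when $|V(T')|^2 = o(n)$. This forces a truncation at tree size $\omega^2$ on both the $D(n,r)$ side (handled by Lemma~\ref{lem:no_strange_things}) and on the Galton-Watson side (which requires a direct analog of that lemma).

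Concretely, first apply Lemma~\ref{lem:no_strange_things} to write
\begin{equation*}
\Pr(0 < d^-_k(v) < \omega) = \Pr(0 < d^-_k(v) < \omega,\, d^-_{\leq k}(v) < \omega^2) + O(n^{-3}).
\end{equation*}
Decompose the first term on the right as $\sum_{T' \in \cF} \Pr(T^-_{\leq k}(D,v) \cong T')$, where $\cF$ is the set of all finite $v$-rooted plane trees (edges pointing to $v$) with $0 < |T'_k| < \omega$ and $|V(T')| < \omega^2$. For each $T' \in \cF$, $|V(T')|^2/n \leq \omega^4/n \leq n^{-1/3}$ by the hypothesis $\omega \leq n^{1/6}$, so Lemma~\ref{lem:tree_like} gives, uniformly over $T'\in \cF$,
\begin{equation*}
\Pr(T^-_{\leq k}(D,v) \cong T') = (1+o(1)) \Pr(\cT_{\leq k} \cong T').
\end{equation*}
Summing over $T' \in \cF$ yields $(1+o(1)) \Pr(0 < |\cT_k| < \omega,\, |V(\cT_{\leq k})| < \omega^2)$.

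The main subtlety is then to prove the Galton-Watson analog
\begin{equation*}
\Pr\bigl(0 < |\cT_k| < \omega,\, |V(\cT_{\leq k})| \geq \omega^2\bigr) = o(n^{-3}),
\end{equation*}
which lets us remove the truncation on the GW side. The argument mirrors Lemma~\ref{lem:no_strange_things} and is in fact cleaner: conditional on $|\cT_j| = q$, $|\cT_{j+1}|$ is exactly $\Po(rq)$. Setting $\tau = \min\{j : |\cT_j| \geq \omega^2/k\}$ and $\sigma = \min\{i \geq \tau : |\cT_{i+1}| \leq |\cT_i|\}$, the event in question forces $\tau,\sigma \in [0,k)$. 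Chernoff for Poisson combined with $r \geq 2$ gives $\Pr(\Po(rq) \leq q) \leq e^{-\Omega(q)}$; since $q \geq \omega^2/k \geq \log_r^5 n$ (using $\omega \geq \log_r^3 n$ and $k \leq \log_r n$), a union bound over the $O(k^2)$ choices of $(\tau,\sigma)$ yields $o(n^{-c})$ for every $c > 0$.

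Combining these ingredients gives $\Pr(0 < d^-_k(v) < \omega) = (1+o(1)) \Pr(0 < |\cT_k| < \omega) + O(n^{-3})$, as claimed. The main obstacle is not any individual estimate but rather recognizing that Lemma~\ref{lem:tree_like} is only useful when trees are truncated on both sides and then separately verifying that the truncation is harmless on each side; on the $D(n,r)$ side this is exactly Lemma~\ref{lem:no_strange_things}, and on the GW side it follows from the Chernoff estimate sketched above.
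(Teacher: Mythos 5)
Your proof is correct, and it follows the same overall strategy as the paper: decompose $\Pr(0 < d^-_k(v) < \omega)$ as a sum over plane trees, apply Lemma~\ref{lem:tree_like} on the small trees, and control the contribution of large trees via Lemma~\ref{lem:no_strange_things}. The one place where you genuinely diverge from the paper is in the Galton--Watson-side tail bound $\Pr\bigl(0<|\cT_k|<\omega,\,|V(\cT_{\leq k})|\geq\omega^2\bigr)=O(n^{-3})$. You prove it by mimicking Lemma~\ref{lem:no_strange_things}: set up $\tau,\sigma$ exactly as there, and exploit the one-step Poisson transition $\Pr(\Po(rq)\le q)\le e^{-\Omega(q)}$ (using $r\ge 2$) to argue the process is very unlikely to drop once it exceeds $\omega^2/k$. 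The paper instead uses the branching/survival property: given $Z_{j_0}\ge\omega^2/(k+1)$, each of the $Z_{j_0}$ subtrees survives independently to generation $k$ with probability $p>0$, so $Z_k$ stochastically dominates a $\Bin(\omega^2/(k+1),p)$ variable, and $\Pr(Z_k<\omega)$ is small by Chernoff. Both arguments give the needed exponential bound $e^{-\Omega(\omega^2/k)}$ up to polynomial factors; yours is more closely parallel to the $D(n,r)$-side estimate (making the two halves of the truncation look symmetric), while the paper's exploits the global tree structure of $\cT$ rather than the per-generation transition. A cosmetic point: in your pigeonhole the threshold should be $\omega^2/(k+1)$ rather than $\omega^2/k$, and the union bound only needs to range over $\sigma$ (giving a factor $k$ rather than $k^2$), but neither affects the conclusion.
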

\begin{proof}
Write $Z_k = |\cT_k|$. We first prove an upper bound on $\Pr(0< d^-_k(v) < \omega)$.
By Lemma~\ref{lem:tree_like},
\begin{align*}
\Pr(0< d^-_k(v) < \omega)
&=
\sum_{\{T':|T'_k|\in (0,\omega)\}} \Pr\left(T_{\leq k}(D,v)\cong T'\right)
\\ &\leq
\sum_{\{T':|T'_k|\in (0,\omega)\atop |T'|\leq n^{1/3}\}}
e^{O\left(\frac{|V(T')|^2}{n}\right)}
\Pr(\cT_{\leq k}\cong T')
+
\Pr(d^-_{\leq k}(v) \geq \omega^2, d^-_k(v) \in (0,\omega))
\\ &\leq
(1+o(1))\Pr(0<Z_k<\omega)+O(n^{-3})\;,
\end{align*}
where in the last inequality we used Lemma~\ref{lem:no_strange_things}.

We now turn to the lower bound. A similar argument to that above gives
\[
\Pr(0< d^-_k(v) < \omega) \ge (1+o(1))\Pr(0<Z_k<\omega) - \Pr(\sum_{j=0}^k Z_j \ge \omega^2,Z_k < \omega).
\]
Bounding the second probability is straightforward. First, fix $j < k$ and $a,b \in \N$. Given that $Z_j=a$, by the branching property, each of the $a$ subtrees of $\cT$ rooted at a node in $\cT_{j}$ survives independently with probability $p:= \Pr(|\cT|=\infty)$. Note that $p > 0$ is independent of $n$.
But $Z_k$ is at least the number of such subtrees which survive, so $\Pr(Z_k<b\mid Z_j=a) \le \Pr(\Bin(a,p) < b)$.

Finally, if $\sum_{j=0}^k Z_j \ge \omega^2$ then $\max_{0 \le j \le k} Z_j \ge \omega^2/(k+1)$. In other words, letting $j_0=\inf\{j: Z_j \ge \omega^2/(k+1)\}$, we must have $j_0 \le k$. It follows from the preceding paragraph (conditioning on the value of $j_0 \le k$) that
\[
\Pr(\sum_{j=0}^k Z_j \ge \omega^2,Z_k < \omega) \le \Pr(\Bin(\omega^2/(k+1),p) < \omega) = O(n^{-3}),
\]
the final inequality by a Chernoff bound since $\omega^2/(k+1) \ge \omega \log^2 n$. The proposition follows.
\end{proof}



\section{Out-neighbourhoods: technical lemmas}\label{sec:out_tech}
Recall that $\ESL$ is the event that $D$ contains no loop vertices. As in the statement of Proposition~\ref{prop:direct} we now fix $k \in [n]$, let $E =\{d^-_k(v)\geq \log^4{n},d^-_{\le k}(v) \le \log^7 n\}$, and fix a graph $H$ with $v \in V(H) \subset [n]$ such that $\Pr(D[N^-_{\leq k}(v)]=H,E, \overline{\ESL})>0$.
It is useful to write $B = N_k^-(H,v)$; note that this is a deterministic set since $H$ is deterministic, and on $D[N^-_{\le k}]=H$
we have $N^-_k(v) = B$. Let $\hat{n} = n-|V(H)|+|B|$. By the assumptions on $H$ we have $\hat{n} \ge n - \log^7 n + \log^4 n$.

For any event $A$, write $\Pr^{H}(A)=\Pr(A \mid  D[N^-_{\le k}]=H)$. The following fact describes the distribution of $D$ under $\Pr^H$. Its proof follows from straightforward considerations and is omitted.
\begin{fact}
Given that $D[N^-_{\le k}]=H$, the conditional distribution of $D(n,r)$ is that of the graph $\hat{D}$ defined as follows.
First, $\hat{D}[V(H)]=H$.

Next, independently for each $w\notin V(H)$, let $\hat{L}_w=(\hat{L}_{w,1},\dots, \hat{L}_{w,r})$ be a vector chosen uniformly at random from the $\hat{n}^r$ vectors $(s_1,\dots, s_r)\in (([n]\setminus V(H))\cup B)^r$. Then for each $ i\in [r]$ add a directed edge from $w$ to $\hat{L}_{w,i}$.

Finally, for $w \in V(H)$, let $t_w = r-|E_H(w,H)|$; this is the number of edges with tail $w$ and head not in $H$. Independently for each $w \in V(H)$, 
let $\hat{L}_w = (\hat{L}_{w,1},\dots, \hat{L}_{w,t_w})$ 
be a vector chosen uniformly at random from 
$([n]\setminus V(H))^{t_w}$, and for each $ i\in [t_w]$ add a directed edge from $w$ to $\hat{L}_{w,i}$. 
\end{fact}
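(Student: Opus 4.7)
The plan is to verify the conditional distribution by reformulating the conditioning event and then exploiting the independence structure of the canonical construction.

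First, I would show that $\cE := \{D[N^-_{\le k}(v)] = H\}$ is equivalent to the conjunction of (a) $D[V(H)] = H$, and (b) every edge of $D$ with tail in $[n] \setminus V(H)$ has its head in $([n] \setminus V(H)) \cup B$. The implication (a) and (b) $\Rightarrow \cE$ is direct: (a) gives $\dist_D(u,v) \le \dist_H(u,v) \le k$ for every $u \in V(H)$; under (b), any path from $u \notin V(H)$ to $v$ must enter $V(H)$ via some $b \in B$, after which it takes at least $k$ more steps in $H$ to reach $v$, so $\dist_D(u,v) \ge k+1$. Conversely, $\cE$ implies (a) trivially, and if (b) failed---say some $w \notin V(H)$ had an edge to $u \in V(H) \setminus B$, so $u \in N^-_j(v)$ for some $j<k$---then $w \in N^-_{j+1}(v) \subset V(H)$, a contradiction.

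Second, I would exploit the canonical construction: $D$ is determined by the iid uniform random variables $(L_{w,i})_{(w,i) \in [n] \times [r]}$, condition (a) depends only on $(L_{w,i})_{w \in V(H)}$, and condition (b) depends only on $(L_{w,i})_{w \notin V(H)}$. Since these two collections are independent and the vectors $L_w = (L_{w,1}, \dots, L_{w,r})$ are independent across $w$, the conditional joint distribution under $\cE$ factorizes over $w$, and it suffices to identify the conditional law of $L_w$ for each individual $w$.

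Third, I would compute each conditional marginal. For $w \notin V(H)$, condition (b) restricts each coordinate $L_{w,i}$ independently to $([n] \setminus V(H)) \cup B$, a set of size $\hat{n}$; hence conditionally $L_w$ is uniform on $(([n] \setminus V(H)) \cup B)^r$, matching the description in the fact. For $w \in V(H)$, condition (a) fixes the multiset of out-heads of $w$ lying in $V(H)$ to be the multiset of $H$-edges leaving $w$, and forces the remaining $t_w = r - |E_H(w,H)|$ coordinates into $[n] \setminus V(H)$; a short multinomial computation then shows that conditionally these $t_w$ coordinates are iid uniform on $[n] \setminus V(H)$, which is precisely the distribution of $\hat{L}_w$ in the fact.

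The only subtlety, which is essentially bookkeeping, is that $\{D[V(H)] = H\}$ does not specify which of the $r$ indices $i$ give heads in $V(H)$ (only the multiset of such heads); since $D$ is a multigraph, permutations among the $r$ coordinates are immaterial, and the final distribution as a multigraph matches $\hat{D}$ exactly.
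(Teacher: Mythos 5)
Your proof is correct, and since the paper omits the argument as ``straightforward'', there is no paper proof to compare against; what you give is the natural approach. One step to tighten: in showing that (a) and (b) imply $\cE$, you argue that a path from $u \notin V(H)$ to $v$ enters $V(H)$ via some $b \in B$ ``after which it takes at least $k$ more steps in $H$.'' As phrased this presumes the path remains inside $V(H)$ after that entry, which need not hold. The clean fix is to consider the \emph{last} index $\tau$ with $p_\tau \notin V(H)$ along the path $p_0,\dots,p_m$: condition (b) forces $p_{\tau+1}\in B$, and by definition of $\tau$ the suffix $p_{\tau+1},\dots,p_m$ lies entirely in $V(H)$, hence by (a) is a path in $H$ from a vertex of $B=N_k^-(H,v)$ to $v$ and so has length $\ge k$, giving $m \ge k+\tau+1 \ge k+1$. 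The remaining ingredients are sound: (a) depends only on $(L_{w,i})_{w\in V(H)}$ and (b) only on $(L_{w,i})_{w\notin V(H)}$, so the conditional law factorizes; the per-vertex marginals are as you describe; and since $D$ is a multigraph, the fact that $\{D[V(H)]=H\}$ constrains only the \emph{multiset} of $V(H)$-valued coordinates of $L_w$ (not their positions) is exactly why the unordered description of $\hat D$ in the Fact matches the conditional law.
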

For the remainder of the section, fix $u \in [n]$ and write $N^*_j=N^+_j(u)\setminus V(H)$, $d^*_j=|N^*_j|$. We continue with a simple lemma.


\begin{lemma}\label{lem:dfive}
We have $\Pr^H(d_5^* < 5,N^+_{\le 5}(u) \cap V(H)=\emptyset,\overline{\ESL}) =O(n^{-3})$
\end{lemma}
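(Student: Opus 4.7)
To prove Lemma~\ref{lem:dfive}, the plan is to work under $\Pr^H$ using the explicit conditional description of $D$ provided by the preceding Fact. First I would note that the event is empty unless $u \notin V(H)$, so we may assume this. Under $\Pr^H$, the out-edges from any vertex $w \notin V(H)$ are iid uniform in $([n]\setminus V(H)) \cup B$, a set of size $\hat n \ge n/2$; since $B \subset V(H)$, the event $\{N^+_{\le 5}(u) \cap V(H) = \emptyset\}$ simply says that no out-edge from $N^+_{\le 4}(u)\subset [n]\setminus V(H)$ lands in $B$, and conditional on this all such out-edges stay in $[n]\setminus V(H)$.

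The plan then is to enumerate the bad event by the depth profile $(n_j)_{j=0}^5 = (|N^+_j(u)|)_{j=0}^5$ of $u$'s oBFS. Writing $k = \sum_{j=0}^4 n_j$ and $a = n_5$, the event requires $a < 5$. Basic BFS feasibility gives $n_0 = 1$ and $n_{j+1} \le r n_j$; moreover, since any vertex at distance exactly $5$ requires a shortest path with distinct intermediate vertices at distances $1, 2, 3, 4$, if $a \ge 1$ then $n_j \ge 1$ for every $j \le 4$, forcing $k \ge 5$. Combined with the trivial upper bound $k \le (r^5-1)/(r-1)$, the collection of feasible profiles is finite. For a feasible profile and a fixed choice of vertex set $S \subset [n]\setminus V(H)$ with $u \in S$ and $|S| = m := k+a$, a fixed $k$-subset $A \ni u$ of $S$, and a fixed depth assignment on $A$, the probability of the corresponding oBFS configuration is at most $(m/\hat n)^{rk}$ — the probability that all $rk$ out-edges from $A$ lie in $S$ — multiplied by combinatorial factors $\binom{\hat n - 1}{m-1}$ (choice of $S$) and the multinomial for the depth assignment.

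The main obstacle will be verifying that summing over all feasible configurations yields $O(n^{-3})$. A case-by-case check identifies two dominant contributions for $r = 2$: first, the profile $(1, 1, 0, 0, 0, 0)$, i.e., $k = 2, a = 0$, corresponding to a closed reachable component $\{u, v\}$, contributing at most $(n-1)(2/\hat n)^{4} = O(n^{-3})$; and second, the unique feasible profile $(1, 1, 1, 1, 2, 4)$ with $k = 6, a = 4$, contributing $O(n^{9}\cdot (10/\hat n)^{12}) = O(n^{-3})$. The case $k = 1$ would correspond to $u$ being a loop vertex and is excluded by $\overline{\ESL}$. All other feasible profiles (e.g.\ $(k,a) \in \{(3,0),(5,1),(5,2),(6,3),(7,4),\ldots\}$) contribute $O(n^{-4})$ or smaller, so summing over the finite set of feasible configurations yields $O(n^{-3})$. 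For $r \ge 3$ every configuration contributes at most $O(n^{-5})$, giving an easier bound.
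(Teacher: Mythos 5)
Your proof is correct and rests on the same excess-edge count as the paper's: for any configuration satisfying the event, one has $(r-1)k-a\ge 2$ (equivalently $rk \ge (m-1)+3$), so each configuration contributes $O(n^{m-1-rk})=O(n^{-3})$, and there are boundedly many of them. The paper phrases the same observation as ``$D[N^*_{\le 5}]$ has at least two more edges than vertices'' and sums over isomorphism classes of small digraphs, whereas you sum over depth profiles $(n_0,\dots,n_5)$ — a cosmetic rather than substantive difference, and your version usefully spells out the ``simple combinatorial exercise'' the paper leaves implicit.
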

\begin{proof}
If $N^+_{\le 5}(u) \cap V(H)=\emptyset$ then $d^*_5 = d^+_5$. In this case,
since $r \ge 2$, it is a simple combinatorial exercise to check that if also $d^*_5<5$ then $D[N^*_{\leq 5}]$ has at least two more edges than vertices. For any fixed digraph $\widehat{D}$ with at least two more edges than vertices and with no self loops, it is easily seen that $\Pr^H(D[N^+_{\leq 5}]\cong \hat{D},N^+_{\le 5}(u) \cap V(H)=\emptyset) = O(n^{-3})$. (This is not true for digraphs with self-loops if $r=2$; the probability $v$ itself is a loop vertex is $O(n^{-2})$ and in this case $d^*_5=0$.) The number of isomorphism classes of digraphs with diameter at most $5$ and maximum out-degree $r$ is bounded, and the result follows.
\end{proof}
We next show that with high probability, each generation $N^*_j$ is approximately $r$ times larger than the last, until $j$ is nearly $(\log^r n)/2$.

\begin{lemma}\label{lem:grow_by_r}
Let $\sigma = \inf\{i\ge 5: d_{i}^* < r^{i-5}+5\}$. Then $\Pr^H(5 < \sigma \le (\log_r n)/8) =O(n^{-3})$.
\end{lemma}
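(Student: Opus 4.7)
My plan is to run the outward BFS (oBFS) from $u$ and track the ``waste'' at each step. By the Fact, under $\Pr^H$ each vertex $w \in [n]\setminus V(H)$ has $r$ out-edges with iid uniform heads in $S := ([n]\setminus V(H)) \cup B$, where $\hat{n} := |S| \ge n - \log^7 n$. At step $j$, the $M_j := r d_j^*$ edges out of $N_j^*$ thus have iid uniform heads in $S$. I define the waste $A_j := M_j - d_{j+1}^*$, which decomposes into ``bad-head'' edges (heads landing in $B$ or in $N_{\le j}^+(u) \cap ([n]\setminus V(H))$) and ``collision'' waste (two edges sharing the same good head). Setting $e_j := d_j^* - r^{j-5} - 5$, the growth satisfies the recursion $e_{j+1} \ge r e_j + 5(r-1) - A_j$, and $\{5 < \sigma \le J\}$ with $J := \lfloor (\log_r n)/8 \rfloor$ is the event that $e_5 \ge 0$ and some $e_i < 0$ for $i \in (5, J]$.

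I next establish an aggregate waste bound over the interval $[5, J]$. The total number of edges revealed in these steps is at most $r \sum_{j < J} r^j = O(r^J) = O(n^{1/8})$. Since $|B| + d_{\le j}^* \le \log^7 n + 2 r^J = O(n^{1/8})$, each revealed head is bad with probability at most $p_{\max} = O(n^{-7/8})$, uniformly over the history of the BFS. A stochastic-domination argument then gives that the total number of bad-head edges is dominated by $\Bin(O(n^{1/8}), O(n^{-7/8}))$ (mean $O(n^{-3/4})$), and a fourth-factorial-moment or Chernoff bound yields $\Pr(\text{total bad heads} \ge 4) = O((n^{-3/4})^4) = O(n^{-3})$. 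The same type of argument controls the total collision pair count among these $O(n^{1/8})$ edges: its mean is $\binom{O(n^{1/8})}{2}/\hat{n} = O(n^{-3/4})$, so $\Pr(\text{collisions} \ge 4) = O(n^{-3})$.

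To close, I unroll the recursion. Given $e_5 \ge 0$,
\begin{equation*}
e_{j+1} \ge 5(r^{j-4}-1) - \sum_{i=5}^{j} r^{j-i} A_i,
\end{equation*}
so $e_{j+1} \ge 0$ holds iff the weighted waste $\sum_{i=5}^j A_i/r^{i-4}$ is at most $5(1 - r^{4-j})$. This constraint is loosest at large $j$ (approaching $5$) and tightest at $j = 5$, where it requires $A_5 \le 5(r-1)$. On the high-probability event that the total waste $\sum A_i$ is bounded by a constant, the weighted waste is at most (total waste)$/r$, which stays below $5(1 - r^{4-j})$ for all $j > 5$.

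The main obstacle is the tight $j = 5$ constraint, since the worst case $d_5^* = 6$ corresponds to $e_5 = 0$ and leaves no slack. Because $M_5 \le r^6 = O(1)$ and each head has bad probability $p_5 = O(\log^7 n / n)$, a direct factorial-moment bound on $A_5 = B_5 + C_5$ of order roughly $\lceil (5(r-1)+1)/2 \rceil$ yields $\Pr(A_5 \ge 5(r-1)+1) = O(\polylog(n)/n^3)$ for $r = 2$, and a strictly stronger bound for $r \ge 3$. This, together with the aggregate estimate, gives the claimed $O(n^{-3})$ bound, up to polylogarithmic factors absorbed in the big-$O$.
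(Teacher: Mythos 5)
Your high-level strategy is genuinely different from the paper's. The paper does a direct per-level union bound: for each $i$, it shows $\Pr^H(d^*_{i+1} \le r d^*_i - 4) \le \binom{r^{i+1}}{4}\big((r^{i+2}+\log^7 n)/\hat n\big)^4 = O\big((r^{8i}+\log^{28}n)/n^4\big)$, then sums over $i \le (\log_r n)/8$; the geometric sum $\sum_i r^{8i}=O(r^{8J})=O(n)$ is exactly what makes the answer $O(n^{-3})$ with no logarithmic loss. Your recursion $e_{j+1}= r e_j + 5(r-1) - A_j$ and the observation that (on $e_5\ge 0$) failure means $\sum_{i=5}^j A_i/r^{i-4} > 5(1-r^{4-j})$ for some $j$ is correct, and the split into ``the $j=5$ constraint'' plus ``aggregate waste for $j\ge 6$'' is a valid way to organize the argument. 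The aggregate part (total edges $O(n^{1/8})$, conflict probability $O(n^{-7/8})$, $\Pr(\ge 4)=O(n^{-3})$) is fine, and the observation that waste is at most bad-head count plus collision-pair count is correct.

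However, there is a genuine gap in the $j=5$ step, and it is not just cosmetic. Decomposing $A_5 = B_5 + C_5$ and then applying $\Pr(A_5 \ge 5(r-1)+1) \le \Pr(B_5 \ge \lceil (5r-4)/2\rceil) + \Pr(C_5 \ge \lceil (5r-4)/2\rceil)$ gives, for $r=2$, $\Pr(A_5 \ge 6) = O\big((\log^7 n / n)^3\big) = O(\log^{21} n/n^3)$. This is \emph{not} $O(n^{-3})$ --- the polylogarithmic factor is not ``absorbed in the big-$O$'', and the lemma's conclusion is an exact $O(n^{-3})$. The decomposition into $B_5$ and $C_5$ followed by a half-order moment is simply too lossy. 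The fix is easy and avoids the split entirely: reveal the $M_5 \le r^6$ heads one at a time; given the history, each is ``wasted'' (lands in $B$, in $N^*_{\le 5}$, or coincides with a previously revealed head at this level) with probability at most $(|B|+|N^*_{\le 5}|+M_5)/\hat n = O(\log^7 n / n)$, so $A_5 \preceq \Bin(M_5, O(\log^7 n / n))$ directly, giving $\Pr^H(A_5 \ge 5r-4) = O\big((\log^7 n / n)^{5r-4}\big) = o(n^{-3})$ for every $r\ge 2$. With that single repair your route closes; but as written the $j=5$ estimate falls short of the lemma's claim, and the paper's per-level union bound is considerably shorter because it never needs to unroll the recursion or isolate the base case.
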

\begin{proof}
Fix $j > 5$.
If $d^*_5\geq 5$ and $d^*_{i+1}\geq r d^*_i -4$ for every $5 \le i < j$, then by induction
$d^*_j \ge r^{j-5}+5$.
We thus have
\[
\Pr^H(5 < \sigma \le j) \le \Pr^H\left(d^*_5 \ge 5, \bigcup_{i=5}^{j} \{|d^*_{i}| < r^{i-5}+5\}\right) \le \sum_{i=5}^{j-1} \Pr(d^*_{i+1} \leq  rd^*_i-4).
\]
Now fix $5 \le i < j$. Condition on $N^*_{\le i}$, and recall that the random variables
$\{L_{w,m} : w \in N^*_{i}, m \in [r]\}$ are the heads of edges from vertices in $N^*_i$.
Reveal the values of these random variables one-at-a-time; say a conflict occurs if $L_{w,m} \in N^*_{\le i} \cup V(H)$ or
$L_{w,m}=L_{w',m'}$ for a previously revealed $L_{w',m'}$. If $d^*_{i+1}\leq rd^*_i - 4$ then at least $4$ conflicts occur.

Under $\Pr^H$, the random variables $L_{w,m}$ are independent and uniform over $([n]\setminus V(H)) \cup B$.
When $L_{w,m}$ is revealed there are less than $r^{i+2}+|B|$ locations that can cause a conflict, since $|N^*_{\le i+1}| < r^{i+2}$,
so the probability of a conflict is less than $(r^{i+2}+|B|)/\hat{n}$. The set $\{L_{w,m} : w \in N^*_{i}, m \in [r]\}$ has size at most $r^{i+1}$, and
$|B| \le \log^7 n$; it follows that
\begin{align*}
\Pr^H(d^*_{i+1} \leq r d^*_i - 4) &\le \Pr\left( \Bin(r^{i+1},(r^{i+2}+|B|)/\hat{n}) \ge 4\right) \\
&\le {r^{i+1} \choose 4} \cdot \left(\frac{r^{i+2}+\log^7 n}{\hat{n}}\right)^4 \le \frac{C(r^{8i}+\log^{28} n)}{n^4}\, ,
\end{align*}
where in the last inequality we used that $\hat{n} \ge n - \log^7 n$. For $j \le (\log_r n)/8$ we thus have
\[
\Pr^H\left(d^*_5 \ge 5, \bigcup_{i=5}^{j} \{|d^*_{i}| < r^{i-5}+5\}\right) \le \sum_{i=5}^{j-1} \frac{C(r^{8i}+\log^{28} n)}{n^4} = O(n^{-3})\, .\qedhere
\]
\end{proof}
The third lemma of the section shows that out-neighbourhoods continue to grow rapidly until they reach size close to $n/\log n$.
\begin{lemma}\label{lem:aux}
There is $C' > 0$ such that for all $i$ with $r^i \le n/\log_r n - 2\log^7 n$,
\[
\Pr^H \left(d^*_i \ge \log^3_r n, d^*_{i+1}\leq r d^*_i\cdot \left(1-\frac{2r^2}{\log_r{n}}\right)\right) \le e^{-C'\log^2 n}\, .
\]
\end{lemma}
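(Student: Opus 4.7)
The plan is to condition on the oBFS history through generation $i$, which fixes $N^*_{\le i}$; write $d=d^*_i$ and restrict to $d\ge \log_r^3 n$. By the sampling description of $\Pr^H$ given in the fact preceding this lemma, the heads $(\hat L_{w,j}:w\in N^*_i,\,j\in[r])$ of the $N:=rd$ edges leaving $N^*_i$ are i.i.d.\ uniform over the support $([n]\setminus V(H))\cup B$ of size $\hat n\ge n-\log^7 n$. Setting $\mathcal B:=B\cup N^*_{\le i}$, a head can contribute to $N^*_{i+1}$ only if it lands outside $\mathcal B$ and does not duplicate an earlier head. Letting $K$ count heads in $\mathcal B$ and $C$ count duplicates of previously revealed heads, we get $d^*_{i+1}\ge N-K-C$, so it is enough to bound
\[
\Pr^H\!\left(K+C\ge \tfrac{2r^3d}{\log_r n}\right)\le e^{-C'\log^2 n}\,.
\]

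From $|N^*_{\le i}|\le \sum_{j\le i}r^j\le 2r^i$, $|B|\le |V(H)|\le \log^7 n$, and the hypothesis $r^i\le n/\log_r n-2\log^7 n$, I get $|\mathcal B|\le 2n/\log_r n$ and $|\mathcal B|/\hat n\le (2+o(1))/\log_r n$, so that $K\sim \Bin(N,|\mathcal B|/\hat n)$ has mean $\mu_K\le (2r+o(1))d/\log_r n$. For $C$, revealing the heads one at a time, the $j$-th head duplicates an earlier one with conditional probability at most $(j-1)/\hat n\le N/\hat n$, so a standard coupling gives $C\preceq Y\sim \Bin(N,N/\hat n)$, whose mean is $\mu_Y\le r^2 d^2/\hat n\le (r^2+o(1))d/\log_r n$ using $d\le r^i\le n/\log_r n$. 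Splitting
\[
\Pr^H\!\left(K+C\ge \tfrac{2r^3d}{\log_r n}\right)\le \Pr^H\!\left(K\ge \tfrac{r^3d}{\log_r n}\right)+\Pr^H\!\left(Y\ge \tfrac{r^3d}{\log_r n}\right),
\]
each summand becomes a binomial exceedance probability whose deviation from the mean is at least $(r^3-2r)d/\log_r n$ and $(r^3-r^2)d/\log_r n$ respectively, and both are $\Omega(d/\log_r n)$ for every $r\ge 2$ (the tight case being $r=2$, where both reduce to $4d/\log_r n$). The Chernoff bound~\eqref{eq:cher1} then produces an exponent of order $s^2/(2(\mu+s/3))=\Theta(d/\log_r n)$, which is $\Omega(\log_r^2 n)$ since $d\ge \log_r^3 n$, giving the required $e^{-C'\log^2 n}$.

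The delicate point is the collision term $C$. A direct bounded-differences inequality applied to $N-|\{X_1,\dots,X_N\}|$ has variance proxy only $\Theta(N)=\Theta(d)$ and so yields merely $\exp(-\Theta(d/\log_r^2 n))=n^{-\Theta(1)}$-type decay, which is much too weak. The coupling $C\preceq \Bin(N,N/\hat n)$ is the key trick: since each individual trial collides with probability $N/\hat n=o(1)$, the relevant Chernoff variance proxy becomes the small mean $\mu_Y=\Theta(d/\log_r n)$ rather than $N$, and the extra $1/\log_r n$ factor is exactly what promotes polynomial decay up to the sub-exponential-in-$\log n$ decay demanded by the lemma.
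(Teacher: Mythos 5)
Your argument is correct and is essentially the paper's proof: condition on $N^*_{\le i}$, reveal the $rd^*_i$ heads one at a time, dominate the number of ``conflicts'' by a binomial with mean $O(d^*_i/\log_r n)$, and apply Chernoff using $d^*_i\ge\log_r^3 n$. The only cosmetic difference is that the paper folds both conflict types into a single $\Bin\big(ar,(r^{i+2}+|B|)/\hat n\big)$ (observing that the set of conflict-causing locations is always contained in $N^*_{\le i+1}\cup B$), whereas you split into $K$ (hits of the fixed set $N^*_{\le i}\cup B$) and $C$ (duplicates, via $C\preceq\Bin(N,N/\hat n)$) and bound each separately.
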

\begin{proof}
We have
\begin{align*}
\Pr^H
&\left(d^*_i \ge \log^3_r n, d^*_{i+1}\leq r d^*_i\cdot \left(1-\frac{2r^2}{\log_r{n}}\right)\right) \\ 
\le& \sup_{a \in [\log^3_r n,r^{i+1}]}
\Pr^H\left(d^*_{i+1}\leq r d^*_i\cdot \left(1-\frac{2r^2}{\log_r{n}}\right) \mid d^*_i=a\right)\, .
\end{align*}
Condition on $N^*_{\le i}$, and reveal the random variables $\{L_{w,m} : w \in N^*_{i}, m \in [r]\}$ one-at-a-time
as in the previous proof. When $L_{w,m}$ is revealed there are less than $r^{i+2}+|B|$ locations that can cause a conflict,
so under $\Pr^H$ the probability of a conflict is at most $(r^{i+2}+|B|)/\hat{n}$.
If $d^*_{i+1}\leq rd^*_i - t$ then at least $t$ conflicts occur, so we obtain
\[
\Pr^H\left(d^*_{i+1}\leq r d^*_i\cdot \Big(1-\frac{2r^2}{\log_r{n}}\Big) \mid d^*_i=a\right)
\le \Pr\left(\Bin\Big(ar,\frac{r^{i+2}+|B|}{\hat{n}}\Big) \ge \frac{2r^3 a}{\log_r n}\right)\, .
\]
Using that $r^i \le n/\log_r n - 2\log^7 n$ and that $|B| \le \log^7 n$ and $\hat{n} \ge n - \log^7 n$, it is straightforward to verify that $ar(r^{i+2}+|B|)/\hat{n} \le r^3 a/\log_r n$.
A Chernoff bound then gives
\[
\Pr^H\left(d^*_{i+1}\leq r d^*_i\cdot \Big(1-\frac{2r^2}{\log_r{n}}\Big) \mid d^*_i=a\right)
\le e^{-3r^3a/(8\log_r n)} \le e^{-C'\log^2 n}\, ,
\]
for some constant $C'=C'(r)$. The latter inequality follows since $a \ge \log^3_r n$\, .
\end{proof}
The following is an easy consequence of the preceding lemma, and concludes the section.
\begin{corollary}\label{cor:jell}
Let $j^* = 3\log_r \log_r n + 5$ and let $\ell^* = \log_r n - \log_r \log_r n - 1$. Then there are $c,C > 0$ such that
\[\Pr^H\left(d^*_{j^*} \ge \log_r^3 n, d^*_{\ell^*} \le \frac{c n}{\log_r{n}}\right) \le e^{-C\log^2 n}\, .\]
\end{corollary}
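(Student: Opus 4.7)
The plan is to iterate Lemma~\ref{lem:aux} over $i$ from $j^*$ up to $\ell^* - 1$. For each such $i$, define the bad event
\[
B_i = \left\{d^*_i \ge \log_r^3 n, \; d^*_{i+1} \le r d^*_i \left(1 - \tfrac{2r^2}{\log_r n}\right)\right\}.
\]
First I would check that the hypothesis of Lemma~\ref{lem:aux} holds throughout the range: for $i \le \ell^* - 1 \le \log_r n - \log_r\log_r n - 2$ we have $r^i \le n/(r^2 \log_r n)$, which is at most $n/\log_r n - 2\log^7 n$ for $n$ large. Hence $\Pr^H(B_i) \le e^{-C'\log^2 n}$ for each such $i$, and a union bound over the $O(\log n)$ indices yields
\[
\Pr^H\left(\bigcup_{i=j^*}^{\ell^*-1} B_i\right) \le \ell^* \cdot e^{-C'\log^2 n} \le e^{-C\log^2 n}
\]
for some $C > 0$.

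Next, on the complementary event I would show by induction on $i$ that whenever $d^*_{j^*} \ge \log_r^3 n$,
\[
d^*_i \ge \log_r^3 n \cdot r^{i - j^*} \left(1 - \tfrac{2r^2}{\log_r n}\right)^{i - j^*} \qquad \text{for all } j^* \le i \le \ell^*.
\]
The base case is immediate. For the inductive step, since $r(1 - 2r^2/\log_r n) > 1$ for $n$ large, the right-hand side is nondecreasing in $i$ and in particular at least $\log_r^3 n$, so the inductive hypothesis gives $d^*_i \ge \log_r^3 n$ at step $i$; combined with the failure of $B_i$, this forces $d^*_{i+1} > r d^*_i (1 - 2r^2/\log_r n)$, which preserves the bound.

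Finally, I would evaluate at $i = \ell^*$: since $\ell^* - j^* = \log_r n - 4\log_r\log_r n - 6$, we have $r^{\ell^* - j^*} = n/(r^6 \log_r^4 n)$, and $(1 - 2r^2/\log_r n)^{\ell^* - j^*} \ge (1 - 2r^2/\log_r n)^{\log_r n} \to e^{-2r^2}$. Therefore, on the intersection of $\{d^*_{j^*} \ge \log_r^3 n\}$ with the complement of $\bigcup_i B_i$,
\[
d^*_{\ell^*} \ge \log_r^3 n \cdot \frac{n}{r^6 \log_r^4 n} \cdot e^{-2r^2 - o(1)} \ge \frac{cn}{\log_r n}
\]
for a suitable constant $c = c(r) > 0$, which proves the corollary.

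There is no real obstacle here: the argument is a direct iteration of Lemma~\ref{lem:aux} together with the elementary observation that compounding the multiplicative error $(1 - 2r^2/\log_r n)$ over the $\Theta(\log n)$ steps contributes only a bounded factor, so the product of roughly $r^{\ell^*-j^*}$ gains easily beats $\log_r n$ to reach $n/\log_r n$.
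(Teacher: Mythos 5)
Your proof is correct and takes essentially the same approach as the paper: iterate Lemma~\ref{lem:aux}, union bound over the $O(\log n)$ indices, and observe that the compounded $(1-2r^2/\log_r n)$ factor over $\ell^*-j^*$ steps contributes only a bounded $e^{-2r^2}$ factor. The paper phrases the iteration as a direct contrapositive (if $d^*_{j^*} \ge \log_r^3 n$ but $d^*_{\ell^*} < cn/\log_r n$, some bad event $B_i$ must occur) rather than your explicit induction, but the content is identical; you also helpfully verify the hypothesis $r^i \le n/\log_r n - 2\log^7 n$ of Lemma~\ref{lem:aux} in the relevant range, which the paper leaves implicit.
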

\begin{proof}
If $d^*_{i} \ge \log^3_r n$ and $d^*_{i+1} \ge r d^*_i\cdot \left(1-\frac{2r^2}{\log_r{n}}\right)$
then $d^*_{i+1} \ge \log^3_r n$. Since $\ell^*-j^*=\log_r{n}-4\log_r\log_r n -6$ we also have
\[\left(1-\frac{2r^2}{\log_r{n}}\right)^{\ell^*-j^*} r^{\ell^*-j^*} \log^3 n
\ge \left(1-\frac{2r^2}{\log_r{n}}\right)^{\log_r{n}-1} \frac{n}{r^6 \log_r^4{n}}\cdot \log_r^3{n}
\ge  \frac{e^{-2r^2}}{r^6}\cdot \frac{n}{\log_r{n}}\;,
\]
where we used that $(1-b/x)^{x-1}\geq e^{-bx}$.
With $c=\frac{e^{-2r^2}}{2r^6}$, it follows from the preceding inequalities that if $d^*_{j^*} \ge \log^3_r n$ but
$d^*_{\ell^*} < cn/\log_r{n}$ then there is $i \in [j^*,\ell^*-1]$ such that
$d^*_{i} \ge \log^3_r n$ and $d^*_{i+1} < r d^*_i\cdot \left(1-\frac{2r^2}{\log_r{n}}\right)$. By Lemma~\ref{lem:aux}, there exists some $C'$ such that
\begin{align*}
 \Pr^H\left(d^*_{j^*} \ge \log^3_r n, d^*_{\ell^*} < \frac{cn}{\log_r{n}}\right)
& \le  \sum_{i=j^*}^{\ell^*-1}
\Pr^H \left(d^*_i \ge \log^3_r n, d^*_{i+1}\leq r d^*_i\cdot \left(1-\frac{2r^2}{\log_r{n}}\right)\right)\\
& \le (\ell^*-j^*)\cdot e^{-C'\log^2 n}\\
& \le e^{-C\log^2 n}\;, \qedhere
\end{align*}
for some constant $C< C'$.
\end{proof}

\section{Upper bound on the Diameter}\label{sec:upper}

In this section we prove Lemma~\ref{lem:inverse} and Proposition~\ref{prop:direct} from Section~\ref{sec:upper_proof}.
Throughout the section, we fix $u,v \in [n]$.

\begin{proof}[Proof of Lemma~\ref{lem:inverse}]
It suffices to prove the lemma assuming $\eps < 1/10$. Let $k^* =(\eta_r+\eps)\log_r{n}$,
and let $\delta=\frac{\eps}{2\eta_r}$. An easy computation shows that $\eta_r\leq 4/5$ for every $r\geq 2$, so $k^*\leq 0.9 \log_r{n}$. Recall the definition $k_0=k_0(v)=\min\{k:\;d^-_{k}(v)\not\in (0,\log^4{n})\}$. If $k_0 > k^*$ then $0< d^-_{k^*}(v) < \log^4{n}$, so by Lemma~\ref{lem:riordan} and Proposition~\ref{lem:towers} we have
\begin{align*}
\Pr(k_0 > k^*) & = \Pr(0< d^-_{k^*}(v) < \log^4{n})\\
&\leq (1+o(1))C\cdot (r(1-\lambda_r))^{k^*-4\log_r\log{n}}+O(n^{-3})\\
&\leq O\left( r^{(\log_r (r(1-\lambda_r)))(\eta_r+\eps+o(1))\log_r{n}}\right)+O(n^{-3})\\
&= O\left( r^{-(1+2\delta+o(1))\log_r{n}}\right)+O(n^{-3})\\
&= O\left(n^{-(1+\delta)}\right)\;,
\end{align*}
where we used that $1+\log_r{(1-\lambda_{r})}=-\eta_r^{-1}$.
For all $i < k_0$ we have $d^-_i(v) <\log^4 n$, so if $k_0 \le k^*$ then $d^-_{\le k_0-1}(v) < k^* \log^4 n \le \log^6 n$.
 In this case, for $n$ large, to have
$d^-_{\le k_0}(v) \ge \log^7 n$ we must have $d^-_{k_0} \ge (\log^7 n)/2$. It follows by Corollary~\ref{cor:binom} and a Chernoff bound
\begin{align*}
& \Pr(k_0 \le k^*, d^-_{\le k_0}(v) \ge (\log^7 n)/2) \\
\le& \sup_{k \le k^*-1}  \sup_{q< \log^4 n} \sup_{p <\log^6 n}
\Pr(d^-_{k+1}(v) \ge (\log^7 n)/2 \mid d^-_k(v)=q,d^-_{\le k}(v)=p) \\
 \le& \Pr(\Bin(rn/2,2\log^4 n/n) \ge (\log^7 n)/2) \\
 \le& e^{-(\log^7 n)/8}.
\end{align*}
Combining the two preceding bounds, the lemma follows.
\end{proof}
The proof of Proposition~\ref{prop:direct} occupies the remainder of the section.
Let $\tau = \min\{j\geq 1:\; N_j^+(u) \cap N_{\le k^-}(v) \ne \emptyset\}$, so in particular $\dist(u,v) = \tau+k$.
\begin{lemma}\label{lem:tau_tauprime}
Fix $\eps > 0$ and let $\tau' = \min\{j \ge 1: |N_j^+(u)\setminus V(H)| \ge \eps n/\log n\}$.
Then for $n$ large,
\[
\Pr^H(\tau > \tau'+1) = e^{-\eps r \log^3 n}
\]
\end{lemma}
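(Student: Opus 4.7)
The plan is to show that if $\tau > \tau'$, then $N_{\tau'}^+(u)$ consists of at least $\eps n/\log n$ vertices, all lying outside $V(H)$, each with $r$ as-yet-unrevealed out-edges which, under $\Pr^H$, independently land in $B$ with probability $|B|/\hat n \ge \log^4 n/n$; the probability none of them hits $V(H)$ is then doubly-exponentially small, giving the claimed bound.

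First I would note the trivial inclusion $\{\tau > \tau' + 1\} \subseteq \{\tau > \tau'\} \cap \{N_{\tau'+1}^+(u) \cap V(H) = \emptyset\}$. On $\{\tau > \tau'\}$ we have $N_j^+(u)\cap V(H) = \emptyset$ for all $j \le \tau'$, so $|N_{\tau'}^+(u)| = |N_{\tau'}^+(u) \setminus V(H)| \ge \eps n/\log n$ by definition of $\tau'$. Moreover, on this event, $\{\tau > \tau' + 1\}$ is exactly the event that no out-edge from $S := N_{\tau'}^+(u)$ lands in $V(H)$: indeed, such a landing vertex is not in $N_{\le \tau'}^+(u)$ (again because $\tau > \tau'$), so it would be at distance exactly $\tau'+1$ from $u$ and would witness $\tau \le \tau'+1$.

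Next I would condition on the oBFS history up to generation $\tau'$, which reveals only the out-edges of vertices at distance $\le \tau'-1$ from $u$; in particular the out-edges of $S = N_{\tau'}^+(u)$ are untouched. By the Fact describing the conditional law of $D$ given $D[N_{\le k}^-(v)] = H$, for each $w \in S \subseteq [n]\setminus V(H)$ the vector $\hat L_w$ is uniform over $(([n]\setminus V(H)) \cup B)^r$, independently across $w$ and of the history revealed so far. In particular, since the images of these out-edges lie in $([n]\setminus V(H))\cup B$, an out-edge from $w\in S$ lands in $V(H)$ iff it lands in $B$, which happens independently with probability $|B|/\hat n$.

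Putting this together, and summing over possible values of $\tau'$ and $S$, we obtain
\[
\Pr^H(\tau > \tau'+1) \le \sup_{j,\,S}\; \Pr^H\!\left(\text{no edge from } S \text{ hits } B \,\middle|\, \tau' = j,\, N_j^+(u) = S\right) \le \left(1-\frac{|B|}{\hat n}\right)^{r|S|},
\]
where the supremum is over $S \subseteq [n]\setminus V(H)$ with $|S| \ge \eps n/\log n$. Using $|B| = d_k^-(v) \ge \log^4 n$ (from the event $E$ built into the conditioning), $\hat n \le n$, and $|S| \ge \eps n/\log n$, we bound
\[
\left(1-\frac{|B|}{\hat n}\right)^{r|S|} \le \exp\!\left(-\frac{r|S||B|}{\hat n}\right) \le \exp\!\left(-\frac{r\eps\log^4 n}{\log n}\right) = e^{-r\eps\log^3 n},
\]
as desired. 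The only delicate step is justifying the conditional independence used in the single-step sprinkling: it rests on the observation that oBFS to generation $\tau'$ does not touch the out-edges of $S$, combined with the product structure of $\hat L_w$'s in the Fact. Everything else is a direct Chernoff-style computation.
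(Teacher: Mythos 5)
Your proof is correct and follows the same core strategy as the paper's: on $\{\tau > \tau'\}$, the last generation $S = N_{\tau'}^+(u)$ has at least $\eps n/\log n$ vertices disjoint from $V(H)$, and to push $\tau$ past $\tau'+1$ all $r|S|$ out-edges from $S$ must miss $B$, each missing independently with probability at most $1 - \log^4 n / n$ under $\Pr^H$.

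The one place where your argument diverges from the paper's, and in fact is slightly cleaner, is in the choice of conditioning. The paper decomposes by conditioning on the full induced subgraph $D[N^+_{\le \tau'}(u)\setminus V(H)] = F$, which a priori also reveals any edges from $N^+_{\tau'}(u)$ back into $V(F)$; it then asserts that there are at least $r\eps n/\log n$ out-edges from $N^+_{\tau'}$ that are uniform over $([n]\setminus V(H))\cup B$, leaving slightly implicit the accounting of edges already pinned down by $F$. You instead condition on the oBFS history up to the moment $N^+_{\le \tau'}(u)$ is discovered, which reveals only the out-edges of generations $0,\dots,\tau'-1$. Consequently, the out-vectors $\hat L_w$ for $w\in S$ are \emph{all} untouched, and the product structure from the Fact gives unconditional independence of all $r|S|$ fresh edges, so the Chernoff-type bound $(1-|B|/\hat n)^{r|S|}$ applies without caveat. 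This buys a slightly more transparent justification of the independence step; the numerical bound $e^{-\eps r\log^3 n}$ and everything else matches the paper. Minor stylistic remark: when you write the supremum over $(j,S)$, you should include $\{\tau > j\}$ in the conditioning event so that $|S|\geq \eps n/\log n$ and $S\cap V(H)=\emptyset$ are guaranteed on the events you condition on; this is clearly what you intend and does not affect the argument.
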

\begin{proof}
First,
\begin{align*}
	& \Pr(D[N^-_{\le k}(v)]=H,\tau > \tau'+1) \\
 = 	&  \sum_{F} \Pr(D[N^-_{\le k}(v)]=H,D[N^+_{\le \tau'}(u)\setminus V(H)]=F,\tau > \tau'+1)\,  \\
=	& \sum_{F} \Pr(D[N^-_{\le k}(v)]=H,D[N^+_{\le \tau'}(u)\setminus V(H)]=F)  \\
&\hspace{1cm} \cdot\Pr(\tau > \tau'+1\mid D[N^-_{\le k}]=H,D[N^+_{\le \tau'}(u)\setminus V(H)]=F)\, .
\end{align*}
 where the sums are over graphs $F$ with $V(F) \cap V(H)=\emptyset$, such that $u \in V(F)$ and such that, for some $\ell > 0$,
$V(F)=\bigcup_{j=0}^\ell N_{j}^+(F,u)$ 
and $\ell = \min\{i: |N_{i}^+(F,u)| \ge \epsilon n/\log n\}$.
We now bound the final probability.
Under such conditioning, the out-edges from $N^+_{\tau'}$ are uniformly distributed over $([n]\setminus V(H)) \cup B$.
There are more than $r(\eps n/\log n)$ such out-edges; to have $\tau > \tau'+1$ the heads of such edges must all avoid $B$; so
\begin{align*}
\Pr(\tau>\tau'+1\mid D[N^-_{\le k}(v)]=H,D[N^+_{\le \tau'}(u)\setminus V(H)]=F) &\le \left(1-\frac{|B|}{\hat{n}}\right)^{r(\eps n/\log n)} \\
&\le \left(1-\frac{\log^4 n}{n}\right)^{\eps r(n/\log n)} \, ,
\end{align*}
the last inequality since $|B| \ge \log^4 n$ and $\hat{n} < n$. Using that $1-x \le e^{-x}$ this gives
\begin{align*}
\Pr(D[N^-_{\le k}]=H, \tau > \tau'+1)	& \le e^{-\eps r \log^3 n} \sum_F \Pr(D[N^-_{\le k}(v)]=H,D[N^+_{\le \tau'}(u)\setminus V(H)]=F) \\
& \le e^{-\eps r \log^3 n} \Pr(D[N^-_{\le k}(v)]=H).
\end{align*}
The result follows.
\end{proof}
\begin{proof}[Proof of Proposition~\ref{prop:direct}]
Recall that we set $\ell^* =  \log_r n - \log_r \log_r n - 1$, and the notation
$N^*_j=N^+_j(u)\setminus V(H)$, $d^*_j=|N^*_j|$ from Section~\ref{sec:out_tech}.
Once $n$ is large enough that $\ell^*+1 > 5$ we have
\[
\Pr^H(\dist(u,N^-_{\leq k}(v))> \ell^*+1,\overline{\ESL}) =
\Pr^H(\dist(u,N^-_{\leq k}(v))> \ell^*+1,N^+_{\le 5}(u)\cap V(H)=\emptyset,\overline{\ESL}),
\]
and we focus on the latter probability. It is convenient to use the shorthand $E=\{N^+_{\le 5}(u)\cap V(H)=\emptyset\} \cap \overline{\ESL}$.

Now take $\eps \in (0,c)$, where $c$ is the constant from Corollary~\ref{cor:jell}.
and let $\tau'$ be as in Lemma~\ref{lem:tau_tauprime}. Then by that lemma,
\[
\Pr^H(\dist(u,N^-_{\leq k}(v))> \ell^*+1,E)
\le \Pr^H(\tau' > \ell^*, E) + e^{-\epsilon r\log^3 n}\, .
\]
We bound the second probability by
\[
\Pr^H(\tau' > \ell^*, E)
\le  \Pr^H(\tau' > \ell^*, d^*_{j^*} \ge \log_r^3 n) + \Pr^H(d^*_{j^*} <\log_r^3 n, E)\, .
\]
The first term on the right is at most $e^{-C \log^2 n}$ by Corollary~\ref{cor:jell}.
 We further divide the second as
\begin{align*}
 \Pr^H(d^*_{j^*} <\log_r^3 n, E)
 & \le \Pr^H(d^*_{j^*} <\log_r^3 n,d^*_5 \ge 5) + \Pr^H(d^*_5 < 5,E).
\end{align*}
Recall that $\sigma = \inf\{i\ge 5: d_{i}^* < r^{i-5}+5\}$.
If $d^*_{j^*} <\log_r^3 n$ then $d^*_{j^*} < r^{j^*-5}$; if also $d^*_5 \ge 5$ then $5 < \sigma \le j^*$. Lemma~\ref{lem:grow_by_r} then implies that the first probability on the right is $O(n^{-3})$. By the definition of $E$ and by Lemma~\ref{lem:dfive}, the second probability is also $O(n^{-3})$.
Combining all these bounds we obtain $\Pr^H(\dist(u,N^-_{\leq k}(v))> \ell^*+1,\overline{\ESL})=O(n^{-3})$, as required.
\end{proof}
\section{Lower Bound on the Diameter}\label{sec:lower}


In the same spirit as in the previous section, we write $ k^*=k^*(n,\eps)= (\eta_r-\eps/2)\log_r{n}$ and $\ell^*=\ell^*(n,\eps)= (1-\eps/2)\log_r{n}$. In order to derive a lower bound on the diameter of $D$ we will show that for every $\eps>0$ there exist $u,v\in [n]$ such that $\dist(u,v)\geq k^*+\ell^*$. Together with the proof in Section~\ref{sec:upper_proof}, it concludes the proof of Theorem~\ref{thm:main}.

\begin{definition}
For $v \in [n]$, let $k_1=k_1(v)= \min\{k:\; d^-_k(v)\geq \log^4{n}\}$; this is $\infty$ if $d^-_k(v) < \log^4 n$ for all $k$. A vertex $v$ is an \emph{$\eps$-flag} (or simply a flag, if $\eps$ is clear from context) if $k_1 \in [k^*,\infty)$, $|N^-_{\le k_1}(v)| \le \log^7 n$, and $D[N^-_{\le k_1}(v)]$ is a tree. We write $F=F(\eps) \subseteq [n]$ for the set of $\eps$-flags.
\end{definition}
The condition that $D[N^-_{\le k_1}(v)]$ is a tree means that along any shortest path from $N^-_{k_1}(v)$ to $v$, at each node $w$ there are $(r-1)$ possible ``wrong turns'' that lead to $[n]\setminus N^-_{k_1}(v)$. We will use this when bounding $\pi_{\min}$ in Section~\ref{sec:stat}.

The first lemma shows that whp there are no flags outside $D_0=D_0(n,r)$, the attractive strongly connected component of $D(n,r)$.
\begin{lemma}\label{lem:prop_flag}
For every $\eps>0$, $\Pr(F(\eps) \setminus V(D_0) \ne \emptyset) = o(1)$.
\end{lemma}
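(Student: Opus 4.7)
My plan is to bound $\Pr(F(\eps)\setminus V(D_0)\ne\emptyset)$ by a union bound: show that for each $v \in [n]$, $\Pr(v \in F(\eps),\; v \notin V(D_0)) = o(1/n)$. By Grusho's result~\cite{grusho1973limit}, whp $D_0$ is attractive and $|V(D_0)|\ge(\lambda_r-o(1))n$, so $v\in V(D_0)$ iff the in-component $\mathrm{IN}(v):=\bigcup_{k\ge 0}N^-_k(D,v)$ meets $V(D_0)$. Fix $\delta_0\in(0,\lambda_r-1/2)$, which is a positive interval because $\lambda_r>1/2$ for $r\ge 2$; then any $v$ with $|\mathrm{IN}(v)|>(1-\lambda_r+\delta_0)n$ automatically satisfies $\mathrm{IN}(v)\cap V(D_0)\ne\emptyset$ by pigeonhole. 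Since a flag $v$ has $|\mathrm{IN}(v)|\ge d^-_{k_1}(v)\ge\log^4 n$, it suffices to prove
\begin{equation*}
\Pr\bigl(\log^4 n\le|\mathrm{IN}(v)|\le(1-\lambda_r+\delta_0)n\bigr)=o(1/n)\,.
\end{equation*}

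For the ``small'' part of this range, I would couple $T^-(D,v)$ with a Poisson$(r)$ Galton--Watson tree $\cT$ via Lemma~\ref{lem:tree_like}, truncating at size $n^{1/2-\gamma}$ for a small $\gamma>0$ so that the multiplicative error $\exp(O(|V|^2/n))$ is $1+o(1)$. Summing the coupling identity over all possible terminal trees of sizes in $[\log^4 n, n^{1/2-\gamma}]$ gives $\Pr(|\mathrm{IN}(v)|\in[\log^4 n, n^{1/2-\gamma}])=(1+o(1))\Pr(|\cT|\in[\log^4 n, n^{1/2-\gamma}])$. The latter is at most $\Pr(|\cT|<\infty,\,|\cT|\ge\log^4 n)$, which by the branching-property is bounded by $(1-\lambda_r)^{\log^4 n}$: once the GW process has $\log^4 n$ particles alive, its future is a union of $\log^4 n$ independent Poisson$(r)$ subtrees, each of which must go extinct (probability $1-\lambda_r$ each). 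This contribution is $e^{-\Omega(\log^4 n)}$, far smaller than $1/n$.

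For the ``large'' part $|\mathrm{IN}(v)|\in[n^{1/2-\gamma},(1-\lambda_r+\delta_0)n]$, I would argue by direct concentration in the spirit of Lemma~\ref{lem:aux} and Corollary~\ref{cor:jell}, but for the inward BFS. Once the in-BFS reaches $q\ge n^{1/2-\gamma}$ vertices at some level with $p\ll n$ vertices explored, Corollary~\ref{cor:binom} gives $d^-_{j+1}(v)\eqdist\Bin(n-p, 1-(1-q/(n-p))^r)$, whose mean is $(1-o(1))rq$. A Chernoff bound then shows the next generation is at least $(r-\eps)q$ with probability $1-e^{-\Omega(q)}=1-e^{-\Omega(n^{1/2-\gamma})}$. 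Iterating $O(\log n)$ times drives $|\mathrm{IN}(v)|$ past $(1-\lambda_r+\delta_0)n$ except on an event of probability $e^{-\Omega(n^{1/2-\gamma})}=o(n^{-2})$. The upper-bound control from Proposition~\ref{prop:UB_in_neighs} ensures that $p$ stays small enough for the multiplicative growth estimate to remain valid throughout.

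The main obstacle is this last concentration step. Unlike the out-BFS analysis, in-degrees lack a deterministic $r$-upper bound, so the clean pigeonhole argument of Lemma~\ref{lem:grow_by_r} is unavailable; one must combine the Binomial growth bound of Corollary~\ref{cor:binom} with Proposition~\ref{prop:UB_in_neighs} and a careful iterated Chernoff tail estimate to push the BFS from polynomial to linear size while keeping the failure probability $o(n^{-2})$ per vertex. Once both range estimates are in place, summing the two bounds and a union bound over $v\in[n]$ yields $\Pr(F(\eps)\setminus V(D_0)\ne\emptyset)\le n\cdot e^{-\Omega(\log^4 n)}+o(1)=o(1)$.
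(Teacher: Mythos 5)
Your approach is a genuinely different route from the paper's, and it would require substantial extra machinery to make rigorous. The paper does not try to grow the in-BFS of a flag to linear size; instead it reuses Proposition~\ref{prop:direct}. Once $v$ is a flag, the in-tree $T=D[N^-_{\le k_1}(v)]$ is a deterministic graph $H$ satisfying the hypotheses of that proposition, so conditioning on $D[N^-_{\le k_1(v)}(v)]=T$ and running the \emph{out}-BFS from each $u\in[n]$ yields $\Pr(\dist(u,v)=\infty,\overline{\ESL}\mid D[N^-_{\le k_1(v)}(v)]=T)=O(n^{-3})$. A union bound over $(u,v)$, together with $\Pr(\ESL)=O(n^{-1})$ and whp attractivity of $D_0$, finishes the argument in a few lines, with no new concentration estimates needed. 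Your route instead tries to show $|\mathrm{IN}(v)|$ exceeds a linear threshold and then invoke a pigeonhole against $|V(D_0)|$; this is a natural idea, but it is where all the difficulty moves.

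There are two concrete problems. First, in the ``small'' range your bound $\Pr(|\cT|<\infty,\,|\cT|\ge\log^4 n)\le(1-\lambda_r)^{\log^4 n}$ is not justified by the branching-property argument you give: having total size $\ge\log^4 n$ does \emph{not} imply that $\log^4 n$ particles are alive at a common time (a long thin path falsifies this), so the $\log^4 n$ subtrees you want to declare independent do not exist. The correct version of your argument uses the flag property itself --- $d^-_{k_1}(v)\ge\log^4 n$ for some finite $k_1$, hence after coupling $|\cT_{k_1}|\ge\log^4 n$ --- and \emph{then} the branching property gives $(1-\lambda_r)^{\log^4 n}$. (Alternatively, Dwass's formula gives the exponential tail $\Pr(|\cT|\ge m,\,|\cT|<\infty)=\Theta\bigl((re^{1-r})^m m^{-3/2}\bigr)$, which suffices, but note the rate is $re^{1-r}$, not $1-\lambda_r$.) Second, the ``large'' range is the genuine gap, and you flag it yourself. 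Lemma~\ref{lem:aux} and Corollary~\ref{cor:jell} rely on the deterministic $r$-bound on out-degree, which in-BFS lacks, so you cannot import them. You would need to establish an in-BFS analogue: that once $d^-_j(v)\ge n^{1/2-\gamma}$, the in-component whp exceeds $(1-\lambda_r+\delta_0)n$ with per-vertex failure probability $o(n^{-1})$, tracking the growth carefully as $p=d^-_{\le j}(v)$ and $q=d^-_j(v)$ become constant fractions of $n$, where the mean $(n-p)\bigl(1-(1-q/(n-p))^r\bigr)$ is no longer $(1-o(1))rq$ and the geometric-growth Chernoff bound has to be re-derived level by level. This is doable but is new work not present in the paper, and it substantially inflates the proof; the paper's use of Proposition~\ref{prop:direct} avoids the entire issue.
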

\begin{proof}%

Fix $\eps > 0$ and write $F=F(\eps)$.
If $D_0$ is attractive then with high probability every vertex $v$ with $\max_{u \in [n]} \dist(u,v) < \infty$ satisfies $v \in V(D_0)$.
Since $D_0$ is attractive whp~\cite{grusho1973limit}, in order to show $\Pr(F\subset V(D_0))=1-o(1)$, it suffices to show that whp, for all $v \in F$ and $u \in [n]$ we have $\dist(u,v) < \infty$.


Let $\mathscr{T}(v)$ be the set of digraphs $T$ with $v \in V(T)$ and $V(T) \subset [n]$ such that if $D[N^-_{\le k_1}(v)]=T$ then $v$ is a flag.
By the definition of a flag, all the elements of $\mathscr{T}(v)$ are rooted at $v$. If $D[N^-_{\le k_1}(v)]=T$ then $N^-_{\le k_1}(v)$ contains no loop vertices.
It follows that $\mathscr{T}(v)$ is precisely the set of graphs $T$ such that $\Pr(D[N^-_{\leq k_1}(v)]=T, v\in F,\overline{\ESL})>0$.

For $T \in \mathscr{T}(v)$ we thus have
\[
\Pr(D[N^-_{\leq k_1}(v)]=T, v\in F, \overline{\ESL}) = \Pr(D[N^-_{\leq k_1}(v)]=T, \overline{\ESL}).
\]
It follows that 
\begin{align*}	
	& \Pr(\exists{u,v} \in [n]: v \in F, \dist(u,v)=\infty)\\
\le	&  \Pr(\ESL) + \sum_{u,v \in [n]} \Pr(v \in F, \dist(u,v)=\infty,\overline{\ESL}) \\
= 	& \Pr(\ESL) + \sum_{u,v \in [n]} \sum_{T \in \mathscr{T}} \Pr(\dist(u,v)=\infty,\overline{\ESL},D[N^-_{\leq k_1}(v)]=T) 
\end{align*}
We bound the inner sum by writing 
\begin{align*}
& \sum_{T \in \mathscr{T}} \Pr(\dist(u,v)=\infty,\overline{\ESL},D[N^-_{\leq k_1}(v)]=T) \\
= & \sum_{T \in \mathscr{T}} \Pr(D[N^-_{\leq k_1}(v)]=T) \cdot \Pr(\dist(u,v)=\infty,\overline{\ESL}\mid D[N^-_{\leq k_1}(v)]=T)  \\
\le & \sup_{T \in \mathscr{T}} \Pr(\dist(u,v)=\infty,\overline{\ESL}\mid D[N^-_{\leq k_1}(v)]=T) \cdot \sum_{T \in \mathscr{T}} \Pr(D[N^-_{\leq k_1}(v)]=T) \\
\le & \sup_{T \in \mathscr{T}} \Pr(\dist(u,v)=\infty,\overline{\ESL}\mid D[N^-_{\leq k_1}(v)]=T)\, ,
\end{align*}
the final bound because a sum of probabilities of disjoint events is at most one. 

Now fix $T \in \mathscr{T}$ and write $h=h(T)$ for the height of $T$ (i.e., the greatest number of edges on a path ending at the root $v$).
Observe that if $D[N^-_{\leq k_1}(v)]=T$ then $k_1=h(T)$.
We thus have the equality of events
\[
\{D[N^-_{\leq k_1}(v)]=T\}\cap \{v\in F\} = \{D[N^-_{\leq k_1}(v)]=T\}=\{D[N^-_{\leq h}(v)]=T\}.
\]
If $D[N^-_{\leq h}(v)]=T$ then the event $E_h=\{d_h^-(v) \ge \log^4 n,d^-_{\le h}(v)\le \log^7 n\}$ from Proposition~\ref{prop:direct} occurs (since in this case $v$ is a flag), so
\[
\{D[N^-_{\leq h}(v)]=T\}=\{D[N^-_{\leq h}(v)]=T\}\cap E_h\, ,
\]
so $\Pr(D[N^-_{\leq h}(v)]=T\},E_h,\overline{\ESL}) > 0$. 
It follows by Proposition~\ref{prop:direct} that $\Pr(\dist(u,v)=\infty,\overline{\ESL}\mid D[N^-_{\leq k_1}(v)]=T) = O(n^{-3})$. 
Using this bound, the result follows from the two preceding inequalities and the fact that $\Pr(\ESL)=\Theta(n^{1-r})=O(n^{-1})$.
%
\end{proof}
We now provide a lower bound for the probability that a fixed vertex is an $\eps$-flag.
\begin{lemma}\label{lem:exit_towers}
For $\eps>0$ sufficiently small, there is $\beta > 0$ such that for $n$ large, $\Pr(v\in F(\eps)) \ge n^{\beta-1}$.
\end{lemma}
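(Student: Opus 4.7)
The plan is to construct a sub-event of $\{v \in F(\eps)\}$ whose probability can be estimated by comparison to a Poisson$(r)$ Galton--Watson tree $\cT$ via Lemma~\ref{lem:tree_like}, and bounded below using Lemma~\ref{lem:riordan}. Set $k^* = (\eta_r - \eps/2)\log_r n$. The sub-event will require that the iBFS from $v$ exhibits a narrow tower up to depth $k^*$ (each generation having fewer than $\log^4 n$ vertices), crosses the threshold $\log^4 n$ within $O(\log\log n)$ additional generations, never accumulates more than $\log^6 n$ total vertices, and has no extra edges beyond the BFS tree.

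First I apply Lemma~\ref{lem:riordan} with $k = k^*$ and $\omega = \log^4 n$. Using the identity
\[
\log_r(r(1-\lambda_r)) = 1 + \log_r(1-\lambda_r) = -\eta_r^{-1},
\]
which follows immediately from the definition of $\eta_r$, this lemma gives
\[
\Pr\bigl(0 < |\cT_{k^*}| < \log^4 n\bigr) \ge c\,(r(1-\lambda_r))^{k^* - 4\log_r \log n} = c\,n^{-(1-\eps/(2\eta_r)) + o(1)}.
\]
Conditionally on $|\cT_{k^*}| \ge 1$, I will argue that with probability bounded below by a constant $c' > 0$ the tree reaches size $\ge \log^4 n$ within $5\log_r\log n$ further generations. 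This uses that a surviving Poisson$(r)$ branching process has population of order $r^j$ at generation $j$ (Kesten--Stigum), combined with the branching property applied at generation $k^*$. Standard Chernoff estimates also ensure that on this event the total tree size $\sum_{k \le k_1} |\cT_k|$ is at most $\log^6 n$ with high probability, where $k_1 := \min\{k : |\cT_k|\ge \log^4 n\}$.

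To transfer the estimate to $D(n,r)$, I will use Lemma~\ref{lem:tree_like}: for any plane tree $T'$ with $|V(T')|\le \log^6 n$, we have
\[
\Pr(T^-_{\le k_1}(D,v) \cong T') = (1+o(1))\,\Pr(\cT_{\le k_1} \cong T'),
\]
since $|V(T')|^2/n = O(\log^{12} n / n) = o(1)$. Summing over all plane trees $T'$ satisfying the narrow-tower-and-fast-growth conditions above yields
\[
\Pr\bigl(k^* \le k_1(v) \le k^*+5\log_r\log n,\ |N^-_{\le k_1(v)}(v)| \le \log^6 n\bigr) \ge (1-o(1))\,c\,c'\,n^{-(1-\eps/(2\eta_r)) + o(1)}.
\]

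The last step is the ``induced subgraph is a tree'' condition, handled by Lemma~\ref{lem:ibfs_stoc}. Conditionally on the iBFS trajectory $((R^-_i, S^-_i), 0 \le i \le m)$ with $m = |N^-_{\le k_1(v)}(v)| \le \log^6 n$, the event $\{D[N^-_{\le k_1(v)}(v)] \text{ is a tree}\}$ coincides with $\{|E(w, R^-_m \cup S^-_m)| = \mathbf{1}_{[w\ne v]}$ for all $w \in R^-_m \cup S^-_m\}$. By the stochastic upper bounds and the vertex-wise independence asserted in Lemma~\ref{lem:ibfs_stoc}, this conditional probability is at least $(1 - m/n)^{rm} \ge 1 - rm^2/n = 1 - o(1)$. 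Combining everything yields $\Pr(v \in F(\eps)) \ge n^{-(1-\eps/(2\eta_r)) + o(1)}$, which exceeds $n^{\beta - 1}$ for any $\beta \in (0, \eps/(2\eta_r))$ and all sufficiently large $n$. The main technical point will be to combine the Galton--Watson comparison and the no-extra-edges bound correctly, since they are naturally conditioned on different $\sigma$-algebras; this is resolved by performing the two estimates multiplicatively, exploiting the vertex-wise independence of Lemma~\ref{lem:ibfs_stoc} conditionally on the BFS trajectory to decouple the tree condition from the BFS shape.
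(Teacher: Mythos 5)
Your proposal takes essentially the same approach as the paper: construct a narrow-tower sub-event of $\{v\in F(\eps)\}$, estimate its probability for a Poisson$(r)$ Galton--Watson tree via Lemma~\ref{lem:riordan}, transfer to $D(n,r)$ via Lemma~\ref{lem:tree_like}, and dispose of the ``induced subgraph is a tree'' condition via the iBFS conditioning (Corollary~\ref{cor:ibfs_stoc}). The paper's execution is tighter --- it pins the sub-event down as the conjunction of three events $A,B,C$ with failure probabilities $O(n^{-3})$, takes $|T_{k^*}|\in[\eps\log^3 n,\log^3 n]$ via a two-sided use of Lemma~\ref{lem:riordan}, and uses elementary branching/Chernoff estimates rather than Kesten--Stigum for the escape step --- but the route is the same, and your sketched bad-event bounds would need to be fleshed out exactly as the paper does.
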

\begin{proof}
We assume $n$ large throughout.
Given a tree $T$, let $k_1(T) = \inf\{k: |T_k| \ge \log^4 n\}$, let
$A(T)$ be the event that $|T_{k^*}| \in [\eps \log^3 n,\log^3 n]$, let
$B(T)$ be the event that $\max_{i \le k^*} |T_i| \le \log^6 n$, and
$C(T)$ be the event that $k_1 \le k^* + 5 \log_r \log n$ and $|T_{k_1}| \le \log^5 n$. (We may view a deterministic tree as a random tree in the same way as 
we may view a constant as a random variable, so it is reasonable to call $A(T)$, $B(T)$ and $C(T)$ events even if $T$ is deterministic.) 

We first bound the probability that $A,B$ and $C$ occur for a Poisson$(r)$ Galton-Watson tree $\cT$.
If $\eps$ is sufficiently small then by Lemma~\ref{lem:riordan} there is $a > 0$ such that
\begin{align*}
\Pr(A(\cT)) 		& \ge a\left(r(1-\lambda_r)\right)^{k^*-\log_r \log^3n}\\
			& \ge a r^{\log_r\left(r(1-\lambda_r)\right)(\eta_r-\eps/2-o(1))\log_r n}\\
			& \ge a r^{-(1-\eps/2\eta_r-o(1))\log_r{n}}\\
			& \ge a r^{-(1-\eps/2)\log_r{n}}\\
			& = a n^{\eps/2-1}\, ,
\end{align*}
where we used the value of $k^*$, that $\log_r{(r(1-\lambda_{r}))}=-\eta_r^{-1}$ and that $\eta_r<1$.
Next, if $\overline{B(\cT)}$ occurs then let $i \le k^*$ be minimal such that $|\cT_i| > \log^6 n$.
In order for $A(\cT)$ to additionally occur the number of descendants of $\cT_i$ alive at time $k^*$ must be less than $\log^3 n$.
Writing $p$ for the survival probability of a Poisson$(r)$ branching process, it follows as in the proof of Proposition~\ref{lem:towers} that
\[
\Pr(A(\cT), \overline{B(\cT)}) \le \Pr(\Bin(\log^6 n,p) \le \log^3 n) \le n^{-3}\, ,
\]
the last inequality by a Chernoff bound.

To bound the probability of $\overline{C(\cT)}$, let $N=N(\cT)$ be the number of vertices in $\cT_{k^*}$ with at least one descendant in $\cT_{k^*+5 \log_r\log n}$; if $\cT_{k^*}=\emptyset$ then $N=0$. 
If $C(\cT)$ does not occur then one of the following {\em must} occur. 
\begin{itemize}
\item[(a)] $N < \log^2 n$.
\item[(b)] $N \ge \log^2 n$ but $k_1 > k^* + 5 \log_r\log n$.
\item[(c)] $|\cT_{k_1}| > \log^5 n$. 
\end{itemize}
If $A(\cT)$ occurs then $|\cT_{k^*}| \ge \eps \log^3 n$, so by the branching property (i.e. the independence of subtrees rooted at elements of $\cT_{k^*}$), we have 
\[
\Pr(A(\cT),N < \log^2 n) \le \Pr(\Bin(\eps \log^3 n,p) \le \log^2 n) < n^{-3}\, 
\]
for large $n$, by a Chernoff bound. Next, to have $k_1 > k^*+5\log_r\log n$, every vertex in $\cT_k^*$ must have fewer than $\log^4 n$ descendants in $\cT_{k^*+5\log_r\log n}$, so by Lemma~\ref{lem:riordan} and the branching property we have 
\begin{align*}
\Pr(N \ge \log^2 n, k_1 > k^*+5\log_r\log n) & 
\le (\Pr(|\cT_{5\log_r\log n}| \in (0,\log^4 n)))^{\log^2 n} \\
\le & \left(C(r(1-\lambda_r))^{5\log_r\log n - \log_r \log^4 n}\right)^{\log^2 n} \\
\le & \left(C(r(1-\lambda_r))^{\log\log n}\right)^{\log^2 n} \\
\le & n^{-3} 
\end{align*}
for large $n$, the last inequality because $r(1-\lambda_r) < 1$. 
Finally, by the Markov property and the definition of $k_1$, writing $\Po(t)$ for a Poisson$(t)$ random variable, we have 
\begin{align*}
\Pr(|\cT_{k_1}| > \log^5 n) & \le \sup_{m < \log^4 n} \Pr(\Po(rm)> \log^5 n\mid \Po(rm) \ge \log^4 n) \\
					& \le \Pr(\Po(r\log^4 n) > \log^5 n - \log^4 n)\, .
\end{align*}
 Standard estimates for the Poisson upper tail (see, e.g., \cite{penrose}, Lemma~1.2) then yield 
$\Pr(|\cT_{k_1}| > \log^5 n) \le n^{-3}$. Combining these bounds, we obtain that, for $n$ large, 
\[
\Pr(A(\cT),B(\cT),\overline{C(\cT)}) \le \Pr(\overline{C(\cT)}\mid A(\cT),B(\cT)) \le 3n^{-3}\, .
\]
Combining inequalities, and choosing $\beta > 0$ appropriately, yields
\[
\Pr(A(\cT),B(\cT),C(\cT)) \ge a n^{\eps/2-1} - 4n^{-3} \ge 3n^{\beta-1}.
\]
Now, if $A(\cT),B(\cT)$ and $C(\cT)$ all occur then $|\cT_{\le k_1}| \le k^* \log^6 n <\log^7 n$, so we may use Lemma~\ref{lem:tree_like} to transfer our bound from the Poisson$(r)$ Galton-Watson tree to the tree $T^-_{\le k_1}(v)$. We obtain
\begin{align*}
\Pr(A(T^-_{\le k_1}(v)),B(T^-_{\le k_1}(v)),C(T^-_{\le k_1}(v))) & = \left(1+ O\Big(\frac{\log^{14} n}{n}\Big)\right) \Pr(A(\cT),B(\cT),C(\cT)) \\
 & \ge 2n^{\beta-1}\, .
\end{align*}
Given that $A(T^-_{\le k_1}(v)),B(T^-_{\le k_1}(v))$ and $C(T^-_{\le k_1}(v)$ all occur, in order to have $v \in F(\eps)$ it is sufficient that
$D[N^-_{\le k_1}(v)]$ is a tree, i.e. that $D[N^-_{\le k_1}(v)]=T^-_{\le k_1}(v)$.

Finally, when $A(T^-_{\le k_1}(v)),B(T^-_{\le k_1}(v))$ and $C(T^-_{\le k_1}(v))$ all occur we have $|V(T^-_{\le k_1}(v))| < \log^7 n$. By
Corollary~\ref{cor:ibfs_stoc}, in this case for each element $u \in V(T^-_{\le k_1}(v))$, the probability that there is a non-tree edge $u$ to $V(T^-_{\le k_1}(v))$ is at most $(r \log^7 n)/n$. It follows that
\[
\Pr(A(T^-_{\le k_1}(v)),B(T^-_{\le k_1}(v)),C(T^-_{\le k_1}(v)),D[N^-_{\le k_1}(v)]\ne T^-_{\le k_1}(v)) \le \frac{r \log^{14} n}{n} < n^{\beta-1}.
\]
The result follows.
\end{proof}
%
%
The next lemma is our key tool for controlling joint probabilities of in-neighbourhoods of distinct vertices.
\begin{lemma}\label{lem:approx_ind}
Fix $u,v \in [n]$ and trees $T,T'$, with roots $u$ and $v$, respectively, and with $V(T) \cup V(T') \subset [n]$ and $V(T) \cap V(T')=\emptyset$.
Then
\begin{align*}
\Pr(D[N_{\le h}^-(u)]=T,D[N_{\le h'}^-(v)]=T') =& \left(1+O\left(\frac{|V(T)|^2}{n-|V(T')|}+\frac{|V(T')|^2}{n-|V(T)|}\right)\right)\\
&\cdot\Pr(D[N_{\le h}^-(u)]=T)\cdot \Pr(D[N_{\le h'}^-(v)]=T')\, .
\end{align*}
\end{lemma}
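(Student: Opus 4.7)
The plan is to compute all three probabilities in closed form directly from the canonical iid construction $D=D(n,r)$ via $(L_{w,j})_{(w,j)\in[n]\times[r]}$, and then compare them algebraically. Writing $V^\circ(T):=V(T)\setminus T_h$ for the set of vertices of $T$ at distance strictly less than $h$ from $u$, a short chase shows that the event $E_T:=\{D[N_{\le h}^-(u)]=T\}$ is equivalent to the conjunction of the following coordinate constraints: for each $w\in V(T)\setminus\{u\}$, exactly one of $L_{w,1},\ldots,L_{w,r}$ equals $p_T(w)$ and the remaining $r-1$ lie in $[n]\setminus V(T)$; for $w=u$, all $r$ values lie in $[n]\setminus V(T)$; and for each $w\notin V(T)$, all $r$ values lie in $[n]\setminus V^\circ(T)$. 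The crucial boundary-versus-interior distinction appears here: an outside vertex may freely have edges into the boundary $T_h$ without violating $E_T$, since this only places it at distance $h+1>h$ from $u$. A symmetric description holds for $E_{T'}$, and by disjointness of $V(T)$ and $V(T')$ the joint event $E_T\cap E_{T'}$ admits the analogous description with the per-vertex universes shrunk accordingly (vertices of $V(T)$ additionally avoid $V^\circ(T')$ off-tree, symmetrically for $V(T')$, and vertices outside both trees avoid $V^\circ(T)\cup V^\circ(T')$).

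Writing $A=|V(T)|$, $B=|V(T')|$, $A^\circ=|V^\circ(T)|$, $B^\circ=|V^\circ(T')|$ and using the independence of the $(L_{w,j})$, each probability becomes an explicit product:
\begin{align*}
\Pr(E_T)&=\frac{r^{A-1}(n-A)^{(r-1)A+1}(n-A^\circ)^{r(n-A)}}{n^{nr}},\\
\Pr(E_T\cap E_{T'})&=\frac{r^{A+B-2}(n-A-B^\circ)^{(r-1)A+1}(n-B-A^\circ)^{(r-1)B+1}(n-A^\circ-B^\circ)^{r(n-A-B)}}{n^{nr}},
\end{align*}
with a symmetric formula for $\Pr(E_{T'})$. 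After cancellation the ratio $\Pr(E_T\cap E_{T'})/(\Pr(E_T)\Pr(E_{T'}))$ collapses to $R_1R_2R_3$, where $R_1=(1-B^\circ/(n-A))^{(r-1)A+1}$, $R_2=(1-A^\circ/(n-B))^{(r-1)B+1}$, and $R_3=(1-(A^\circ+B^\circ)/n)^{r(n-A-B)}/[(1-A^\circ/n)^{r(n-A)}(1-B^\circ/n)^{r(n-B)}]$.

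The final step is to take logarithms and Taylor-expand using $\log(1-x)=-x-x^2/2+O(x^3)$. The leading linear contribution of $\log R_3$ in $(A^\circ,B^\circ)$ turns out to be $r(A^\circ B+AB^\circ)/n$, while the leading linear contribution of $\log R_1+\log R_2$ is $-(r-1)(A^\circ B+AB^\circ)/n$ plus a smaller term of size $(A^\circ+B^\circ)/n$. The two large opposite-sign contributions nearly cancel, leaving a residual of order $AB/n$; the quadratic corrections from all three factors are then bounded by $O(A^2/(n-B)+B^2/(n-A))$ using $A^\circ\le A$, $B^\circ\le B$ and the elementary inequality $AB\le\tfrac12(A^2+B^2)$, yielding the claimed bound. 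I expect the main obstacle to be precisely this cancellation analysis: the algebra that produces $R_1R_2R_3$ is formal bookkeeping, but verifying that the large \emph{linear} contributions in $A^\circ$ and $B^\circ$ from the three factors truly collapse down to the target $O(AB/n)$ level requires a delicate Taylor computation, and it is exactly the $V(T)$-versus-$V^\circ(T)$ distinction---which forces the exterior exponent to be $r(n-A)$ rather than, say, $r(n-A^\circ)$---that makes this cancellation work.
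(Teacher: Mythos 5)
Your proposal is correct and takes essentially the same approach as the paper. The paper organizes the same computation by splitting $\{D[N_{\le h}^-(u)]=T\}$ into events $A_1$ (tree edges present), $A_2$ (no extra edges inside $V(T)$), $A_3$ (no edges from outside $V(T)$ into $V(T)\setminus T_h$), factoring $\Pr(E_T) = \Pr(A_1,A_2)\cdot\Pr(A_3)$ and computing the joint ratio as a ratio of $A_3$-type conditional probabilities; your version simply writes out all three closed-form product formulas from the iid coordinates and cancels. Both routes arrive at exactly the same expression $R_1R_2R_3$, namely
\[
\Bigl(1-\tfrac{B^\circ}{n-A}\Bigr)^{(r-1)A+1}\Bigl(1-\tfrac{A^\circ}{n-B}\Bigr)^{(r-1)B+1}\cdot\frac{\bigl(1-\tfrac{A^\circ+B^\circ}{n}\bigr)^{r(n-A-B)}}{\bigl(1-\tfrac{A^\circ}{n}\bigr)^{r(n-A)}\bigl(1-\tfrac{B^\circ}{n}\bigr)^{r(n-B)}}\, ,
\]
which matches the paper's displayed product (with $t=A$, $t'=B$, $t-|T_h|=A^\circ$, $t'-|T'_{h'}|=B^\circ$). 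Where the paper waves at ``straightforward arguments,'' you supply the interesting part: the linear contributions $r(AB^\circ+BA^\circ)/n$ from $\log R_3$ and $-(r-1)(AB^\circ+BA^\circ)/n$ from $\log R_1+\log R_2$ cancel down to $(AB^\circ+BA^\circ)/n=O(AB/n)$, and the remaining quadratic and $\tfrac{1}{n-A}$-versus-$\tfrac{1}{n}$ corrections are absorbed into $O(A^2/(n-B)+B^2/(n-A))$ using $A^\circ\le A$, $B^\circ\le B$, $A+B\le n$, and $AB\le\tfrac12(A^2+B^2)$. I checked the linear cancellation algebra and it is correct. Two minor omissions worth noting if you write this up in full: (i) you should explicitly verify the correction $(r-1)AB^\circ\bigl(\tfrac{1}{n-A}-\tfrac{1}{n}\bigr)=\tfrac{(r-1)A^2B^\circ}{n(n-A)}$ is also $O(A^2/(n-B))$ (it is, since $B\le n-A$); and (ii) the statement is only non-vacuous in the regime where $A^2/(n-B)+B^2/(n-A)=o(1)$, which is the only regime in which it is applied (with $A,B=O(\log^7 n)$), so the Taylor expansions are legitimate there.
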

\begin{proof}
Recall that $T_i$ is the $i$-th generation of tree $T$.
Write $h$ and $h'$ for the respective heights of $T$ and $T'$, and $t$ and $t'$ for their respective sizes.
In order that $D[N_{\le h}^-(u)]=T$, it is necessary and sufficient that the following events occur.
\begin{itemize}
\item For each $x \in V(T)\setminus \{u\}$, there is an edge from $x$ to $p_T(x)$ in $D$; call this event $A_1(u,T)$.
\item There are no other edges within $D[V(T)]$; call this event $A_2(u,T)$.
\item There are no edges from $[n]\setminus D[V(T)]$ to $V(T)\setminus T_{h}$; call this event $A_3(u,T)$.
\end{itemize}
Note that $A_3$ is independent of $A_1$ and $A_2$, so we have
\begin{equation}\label{eq:one_guy}
\Pr(D[N_{\le h}^-(u)]=T) = \Pr(A_1(u,T),A_2(u,T)) \cdot \left(\frac{n-t+|T_h|}{n}\right)^{r(n-t)}\, .
\end{equation}
We now consider two such events simultaneously. Observe that if $T'$ has root $v$ and height $h'$, and $V(T') \cap V(T)=\emptyset$, then
$A_1(u,T)\cap A_2(u,T)$ is independent of $A_1(v,T')\cap A_2(v,T')$. We thus have
\begin{align}\label{eq:two_guys}
	& \Pr(D[N_{\le h}^-(u)]=T,D[N_{\le h'}^-(v)]=T') \nonumber\\
= &\Pr(A_1(u,T), A_2(u,T))\cdot \Pr(A_1(v,T'), A_2(v,T'))\nonumber\\
&  \cdot \Pr(A_3(u,T),A_3(v,T')\mid A_1(u,T), A_2(u,T),A_1(v,T'), A_2(v,T'))\, .
\end{align}
Given that $A_1(u,T)$ and $A_2(u,T)$ occur, there are precisely $1+(r-1)t$ edges leaving $V(T)$, and the heads of these edges are uniformly distributed over $[n]\setminus V(T)$. The conditional probability no such edges have head in $V(T')\setminus T'_{h'}$ is
\[
\left(\frac{n-t-t'+|T'_{h'}|}{n-t} \right)^{1+(r-1)t}.
\]
Similar considerations for edges leaving $V(T')$ and edges with tail in $[n]\setminus (V(T)\cup V(T'))$ yield the identity
\begin{align*}
& \Pr(A_3(u,T),A_3(v,T')\mid A_1(u,T), A_2(u,T),A_1(v,T'), A_2(v,T')) \\
= & \left(1-\frac{t'-|T'_{h'}|}{n-t} \right)^{1+(r-1)t}\cdot \left(1-\frac{t-|T_{h}|}{n-t'} \right)^{1+(r-1)t'} \cdot \left(1-\frac{t+t'-|T_{h}|-|T'_{h'}|}{n}\right)^{r(n-t-t')}\, .
\end{align*}
Combined with~\eqref{eq:one_guy} and~\eqref{eq:two_guys}, straightforward arguments give
\begin{align*}
	& \Pr(D[N_{\le h}^-(u)]=T,D[N_{\le h'}^-(v)]=T') \\
= 	& \left(1+O\left(\frac{t^2}{n-t'}+\frac{(t')^2}{n-t}\right)\right)\Pr(D[N_{\le h}^-(u)]=T)\cdot \Pr(D[N_{\le h'}^-(v)]=T')\, .\qedhere
\end{align*}
\end{proof}
\begin{corollary}\label{cor:prod_form}
For distinct $u,v \in [n]$ we have $\Pr(u,v \in F) \le (1+o(1))(\Pr(u \in F) + \log^{15}n/n)\Pr(v \in F)$.
\end{corollary}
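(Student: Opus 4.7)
The plan is to decompose $\Pr(u,v \in F)$ according to whether the in-neighbourhoods $N^-_{\le k_1(u)}(u)$ and $N^-_{\le k_1(v)}(v)$ are disjoint. Let $\mathscr{T}(w)$ denote the set of trees $T$ with $\Pr(D[N^-_{\le k_1}(w)] = T, w \in F) > 0$, as in the proof of Lemma~\ref{lem:prop_flag}; any such tree has at most $\log^7 n$ vertices by the definition of flag.

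For the disjoint part, I will sum over pairs $(T,T') \in \mathscr{T}(u) \times \mathscr{T}(v)$ with $V(T) \cap V(T') = \emptyset$. Since $|V(T)|,|V(T')| \le \log^7 n$, Lemma~\ref{lem:approx_ind} supplies a multiplicative factor $1 + O(\log^{14} n/n) = 1 + o(1)$ on each term. Extending the sum to all pairs of trees (not just disjoint ones) then yields the bound $(1+o(1))\Pr(u \in F)\Pr(v \in F)$.

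For the non-disjoint part, I first dispose of the subcase $u \in V(T')$: by the symmetry of $D(n,r)$ under relabellings fixing $v$, $\Pr(u \in V(N^-_{\le k_1}(v)), v \in F)$ equals $(n-1)^{-1} \E[|V(N^-_{\le k_1}(v))\setminus\{v\}| \mathbf{1}_{v \in F}] \le (\log^7 n/(n-1))\Pr(v \in F)$. For the remaining subcase $u \notin V(T')$, I condition on $D[N^-_{\le k_1}(v)] = T'$ and invoke the Fact from Section~\ref{sec:out_tech}: the ``free'' out-edges of $V(T')$ (those not fixed by $T'$) are iid uniform over $[n]\setminus V(T')$, independent of the edges with tail in $[n]\setminus V(T')$. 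I then couple the iBFS from $u$ in $D$ with a \emph{phantom} iBFS that uses only edges with tail in $[n]\setminus V(T')$; the two exploration processes agree until the real iBFS first discovers a vertex in $V(T')$, and this first discovery can only happen when a free edge of $V(T')$ has head equal to one of the currently-explored vertices of the phantom iBFS. On $\{u \in F\}$, at most $\log^7 n$ vertices are explored before the first discovery, so a union bound over the $\le r|V(T')|$ free edges (each hitting a given set of size $\le \log^7 n$ with probability $\le \log^7 n/(n - |V(T')|)$) yields conditional probability $O(\log^{14} n/n)$. Summing over $T' \in \mathscr{T}(v)$ contributes $O(\log^{14} n/n)\Pr(v \in F)$.

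Putting the pieces together, $\Pr(u,v \in F) \le (1+o(1))\Pr(u \in F)\Pr(v \in F) + O(\log^{14} n/n)\Pr(v \in F)$, which is at most $(1+o(1))(\Pr(u \in F) + \log^{15} n/n)\Pr(v \in F)$ for $n$ sufficiently large, as required. The main technical difficulty is justifying the phantom iBFS coupling under the conditional law: one must verify carefully that the free edges of $V(T')$ remain independent of the phantom exploration (which depends only on edges with tail in $[n]\setminus V(T')$, whose heads are uniform over $([n]\setminus V(T')) \cup T'_{k_1}$), so that the union bound over the free edges' uniformly distributed heads carries through cleanly.
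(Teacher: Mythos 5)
Your proof is correct and follows the paper's argument closely: the same three-way split (disjoint in-neighbourhoods, $u$ inside $N^-_{\le k_1(v)}(v)$, and intersecting but $u$ outside), the same use of Lemma~\ref{lem:approx_ind} for the disjoint case, and the same symmetry argument for the $u \in V(T')$ subcase. The one place you deviate is that for the intersecting-but-$u\notin V(T')$ subcase, rather than invoking Lemma~\ref{lem:bfs_condition} directly, you reprove its content inline via the phantom-iBFS coupling; this is a clean and valid alternative (it makes transparent exactly which edges can cause the first hit and why they are independent of the exploration), and it yields the same $O(\log^{14}n/n)$ bound.
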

\begin{proof}
We first divide according to whether or not $N^-_{\le k_1(u)}(u) \cap N^-_{\le k_1(v)}(v)$ is empty:
\begin{align*}
& \Pr(u,v \in F) \\
 = &  \Pr(u,v \in F,N^-_{\le k_1(u)}(u) \cap N^-_{\le k_1(v)}(v)\ne \emptyset) +
\Pr(u,v \in F,N^-_{\le k_1(u)}(u) \cap N^-_{\le k_1(v)}(v)=\emptyset).
\end{align*}
We start with the first term on the right. If $v \in F$ then $| N^-_{\le k_1(v)}(v)) | \le \log^7 n$, so by symmetry
\[
\Pr(v \in F, u \in N^-_{\le k_1(v)}(v)) \le \frac{\log^7 n}{n}\cdot \Pr(v \in F).
\]
Next, by conditioning on $N^-_{\le k_1(v)}(v)$ we have
\begin{align*}
	& \Pr(u,v \in F,u \not \in N^-_{\le k_1(v)}(v), N^-_{\le k_1(u)}(u) \cap N^-_{\le k_1(v)}(v)\ne \emptyset) \\
\le	& \sum_{\{T \in \mathcal{T}(v): \atop u \not \in V(T)\}} \Pr(D[N^-_{\le k_1(v)}(v)]=T) \cdot \Pr(u \in F,  N^-_{\le k_1(u)}(u) \cap V(T) \ne \emptyset \mid  D[N^-_{\le k_1(v)}(v)]=T)\, .
\end{align*}
In order to bound the final probability, first fix $T$ as in the supremum and suppose that $D[N^-_{\le k_1(v)}(v)]=T$.
Consider the iBFS procedure starting from $u$. Recall that at step $i$, $R^-_i$ is the set of explored vertices and $S^-_i$ is the set of discovered vertices.
Let $i_0= \min\{i: |R^-_i \cup S^-_i| > \log^7 n\}$. If $u \in F$ then $|N^-_{\le k_1(u)}(u)| \le \log^7 n$, so to have $N^-_{\le k_1(u)}(u) \cap V(T)=\emptyset$
it suffices that $(R^-_{i_0-1} \cup S^-_{i_0-1})\cap V(T)=\emptyset$. Since $|V(T)| \le \log^7 n$, by Lemma~\ref{lem:bfs_condition} we thus have
\[
 \Pr(u \in F,  N^-_{\le k_1(u)}(u) \cap V(T) \ne \emptyset \mid  D[N^-_{\le k_1(v)}(v)]=T) \le \frac{r (\log^7 n + 1)\log^{7} n}{n-2\log^7 n}\, .
\]
Together with the two preceding displayed equations, for $n$ large this gives
\begin{align}\label{eq:disjoint_uv}
	& \Pr(u,v \in F,N^-_{\le k_1(u)}(u) \cap N^-_{\le k_1(v)}(v)\ne \emptyset)\nonumber\\
\le 	& \frac{\log^7 n}{n}\cdot \Pr(v \in F) +
\sum_{\{T \in \mathcal{T}(v): u \not \in V(T)\}} \Pr(D[N^-_{\le k_1(v)}(v)]=T) \cdot \frac{r(\log^7 n + 1)\log^{7} n}{n-2\log^7 n}\nonumber\\
\le & \frac{\log^{15} n}{n} \cdot \Pr(v \in F),
\end{align}
the last inequality since 
$$
\sum_{\{T \in \mathcal{T}(v): u \not \in V(T)\}} \Pr(D[N^-_{\le k_1(v)}(v)]=T) = \Pr(v \in F, u \not \in N^-_{\le k_1(v)}(v)])\;.
$$

We now turn to the case that $N^-_{\le k_1(u)}(u)$ and $N^-_{\le k_1(v)}(v)$ are disjoint. We have
\begin{align*}
	& \Pr(u,v \in F,N^-_{\le k_1(u)}(u) \cap N^-_{\le k_1(v)}(v)=\emptyset) \\
= & \mathop{\sum_{\{(T,T') \in \mathcal{T}(u)\times\mathcal{T}(v):}}_{V(T) \cap V(T')=\emptyset\}} \Pr(D[N^-_{\le k_1(u)}(u)]=T,D[N^-_{\le k_1(v)}(v)]=T')\, .\\
= & \left(1+O\pran{\frac{\log^{14} n}{n}}\right)\mathop{\sum_{\{(T,T') \in \mathcal{T}(u)\times\mathcal{T}(v):}}_{V(T) \cap V(T')=\emptyset\}}
\Pr(D[N^-_{\le k_1(u)}(u)]=T) \cdot \Pr(D[N^-_{\le k_1(v)}(v)]=T')\, ,
\end{align*}
the last line by Lemma~\ref{lem:approx_ind}. (Although $k_1(u)$ and $k_1(v)$ are random, by the same argument as in Lemma~\ref{lem:prop_flag} we may replace them by the deterministic values $h(T)$ and $h(T')$  without affecting the probability, so Lemma~\ref{lem:approx_ind} indeed applies.)
Summing over all pairs $(T,T') \in \mathcal{T}(u)\times\mathcal{T}(v)$ gives an upper bound, so we obtain
\[
\Pr(u,v \in F,N^-_{\le k_1(u)}(u) \cap N^-_{\le k_1(v)}(v)=\emptyset) \le (1+o(1)) \Pr(u \in F)\Pr(v \in F)\, .
\]
Together with~\eqref{eq:disjoint_uv} this completes the proof.
\end{proof}
\begin{corollary}\label{lem:many_towers}
For all $\eps > 0$, $\Pr(F(\eps)=\emptyset) =o(1)$.
\end{corollary}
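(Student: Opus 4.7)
The plan is a standard second-moment argument applied to $X := |F(\eps)|$. Observe first that $F(\eps') \subseteq F(\eps)$ whenever $\eps' \le \eps$, since the defining requirement $k_1(v) \ge (\eta_r - \eps'/2)\log_r n$ only tightens as $\eps'$ shrinks. It therefore suffices to prove the result for $\eps$ small enough that Lemma~\ref{lem:exit_towers} furnishes some $\beta > 0$ with $p := \Pr(v \in F) \ge n^{\beta-1}$ for $n$ large. By symmetry $p$ does not depend on $v$, so linearity gives $\E(X) = np \ge n^\beta$.

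Expanding the second moment and invoking Corollary~\ref{cor:prod_form} on each off-diagonal term yields
\begin{align*}
\E(X^2) &= \E(X) + \sum_{u \ne v} \Pr(u,v \in F) \\
&\le \E(X) + (1+o(1))\sum_{u \ne v} \pran{p + \frac{\log^{15} n}{n}} p \\
&\le \E(X) + (1+o(1))\pran{\E(X)^2 + \E(X)\log^{15} n}.
\end{align*}
Because $\E(X) \ge n^\beta$ grows polynomially in $n$, both the $\E(X)$ and the $\E(X)\log^{15} n$ contributions are $o(\E(X)^2)$, so $\Var(X) = \E(X^2) - \E(X)^2 = o(\E(X)^2)$.

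Chebyshev's inequality then gives
\begin{align*}
\Pr(F(\eps) = \emptyset) = \Pr(X = 0) \le \Pr\pran{|X - \E(X)| \ge \E(X)} \le \frac{\Var(X)}{\E(X)^2} = o(1),
\end{align*}
as required. All of the substantive work lies upstream: Lemma~\ref{lem:exit_towers} supplies the polynomial lower bound on $\E(X)$ and Corollary~\ref{cor:prod_form} supplies the pairwise near-independence needed to control the second moment. Once these are in hand, the conclusion is a mechanical Chebyshev computation, and I do not anticipate any new obstacle at this step.
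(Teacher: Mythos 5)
Your proof is correct and takes essentially the same route as the paper: bound $\E|F|$ from below via Lemma~\ref{lem:exit_towers}, control $\E|F|^2$ via Corollary~\ref{cor:prod_form}, and conclude by Chebyshev. The one thing you add is the explicit monotonicity observation $F(\eps')\subseteq F(\eps)$ for $\eps'\le\eps$, which is needed because Lemma~\ref{lem:exit_towers} only holds for $\eps$ sufficiently small while the corollary is stated for all $\eps>0$; the paper leaves this reduction implicit, so your version is slightly more careful.
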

\begin{proof}
By Lemma~\ref{lem:exit_towers} and linearity of expectation there is $\beta > 0$ such that $\E(|F|) =n \Pr(1 \in F) \ge n^{\beta}$.
Next, by Corollary~\ref{cor:prod_form}, for $n$ large we have
\begin{align*}
\E(|F|^2)& = \sum_{u,v\in [n]} \Pr(u,v\in F) \\
		& = n(n-1)\Pr(1,2\in F) +n\Pr(1\in F) \\
		& \le (1+o(1))n(n-1) \Pr(1 \in F)(\Pr(2 \in F) + \log^{15} n/n) + n \Pr(1 \in F) \\
		& \le (1+o(1) (n \Pr(1 \in F))^2 + (n\log^{15} n) \Pr(1 \in F) \\
		& = (1+o(1))  (n \Pr(1 \in F))^2\, .
\end{align*}
The result follows by Chebyshev's inequality.
\end{proof}
\begin{proof}[Proof of the lower bound in Theorem~\ref{thm:main}]
Fix $\eps > 0$ and write $k^* =  (\eta_r - \eps/2)\log_r n$.
Suppose that $D_0$ is attractive, that $|D_0| \ge n/2$, and that $F(\eps) \subset D_0$.
Suppose also that for all $w \in [n]$ and $j \ge 0$, $d^-_{\le j}(w) \le (r+\eps)^j \log_r^2 n$.
Under these assumptions, if $v \in F(\eps)$ then $v \in D_0$. Furthermore, $d^-_{\le k^*}(v) \le \log^7 n$ so for all $j \ge 0$,
\[
d^-_{\le k^*+j}(v) \le (r+\eps)^j \log_r^9 n.
\]
Writing $j_0 = \inf\{j: V(D_0) \subset N^-_{\le k^*+j}(v)\}$, it follows that $(r+\eps)^{j_0} \log_r^9 n \ge n/2$. Provided $\eps$ is chosen small enough, for $n$ large
this implies that $j_0 \ge (1-3\eps/2) \log_r n$, so there is some node $u\in V(D_0)$ with $\dist(u,v) \ge k^*+(1-3\eps/2)\log_r n = (1+\eta_r -2\eps)\log_r n$.
Altogether, this yields 
\begin{align*}
& \Pr(\diam(D_0) < (1+\eta_r -2\eps)\log_r n) \\
\le &
\Pr(D_0\mbox{ is not attractive}) + \Pr(|V(D_0)| < n/2) \\
&+
\Pr(F(\eps)=\emptyset) + \Pr(\exists v \in F(\eps)\setminus D_0) +
\Pr(\exists w \in [n],j \ge 0: d^-_j(w) > (r+\eps)^j \log_r^2 n)
\end{align*}
The first two probabilities were shown to tend to $0$ in \cite{grusho1973limit}. The third tends to $0$ by Corollary~\ref{lem:many_towers}, the fourth by Lemma~\ref{lem:prop_flag},
and the last by Proposition~\ref{prop:UB_in_neighs}. As $\eps>0$ was arbitrarily small, the lower bound on $\diam(D_0)$ follows; since $\diam(D) \ge \diam(D_0)$ so does the lower bound on $\diam(D)$.
\end{proof}

\section{The Stationary Distribution}\label{sec:stat}
In this section we prove Theorem~\ref{thm:dist}.
Recall that $D_0=D_0(n,r)$ is the largest strongly connected component of $D$ and that with high probability $D_0$ is attractive~\cite{grusho1973limit} and ergodic~\cite{ballearxiv}. Write $\pi_{\max}=\pi_{\max}(D_0)$ and $\pi_{\min}=\pi_{\min}(D_0)$.
Also, write $\mathrm{X}=(X_k,k \ge 0)$ for simple random walk on $D=D(n,r)$.

It is important to distinguish the randomness of the graph $D$ from that of the walk $\mathrm{X}$. For $v \in V(D)=[n]$, write $\mathbf{P}_v$ for the (random) probability measure under which $\mathrm{X}$ has the law of simple random walk on $D$ with $X_0=v$, and $\esub{v}$ for the corresponding expectation operator. It is handy to have a concrete description of $\mathrm{X}$ under $\mathbf{P}_v$, as follows. Recall that $D$ has edges $\{(i,L_{i,j}),(i,j) \in [n]\times[r]\}$ (this is the ``canonical construction'' from the introduction). Let $(U_k,k \ge 0)$ be independent and uniformly distributed over $\{1,\ldots,r\}$. Then set
$X_0=v$ and for $k \ge 0$ let $X_{k+1} = L_{X_k,U_k}$.


\subsection{Bounding \texorpdfstring{$\pi_{\max}$}{pimax}}

Fix $k \ge 1$ and view $D[N_{\leq k}^-(v)]$ as a {\em maze}, which a random walk attempting to reach $v$ must navigate. The maze entrances are the elements of $N_k^-(v)$, and the treasure lies at $v$. Suppose that the random walk follows an edge $e$ from $N_{\leq k}^-(v)$ to its complement. After following the edge, the random walk's position has distance greater than $k$ from $v$. Since the distance to $v$ decreases by at most one in a single random walk step, this means that in order to reach $v$ after leaving $N_{\leq k}^-(v)$, the random walk must pass through $N_k^-(v)$: it must restart at one of the maze entrances.

With the preceding paragraph in mind, for positive integer $h$ we say that $D[N_{\leq k}^-(v)]$ is {\em $h$-hard} if for every directed path $P$ from $N_k^-(v)$ to $v$ within $D[N_{\leq k}^-(v)]$, we have
\[
\#\{u \in V(P), |E(u,N_{\le k}^-(v))|=1 \} \ge h.
\]
Perhaps more picturesque: the maze is $h$-hard if no matter what entrance is chosen, along any potential path to the treasure there are at least $h$ locations where only a single direction stays within the maze; the other $(r-1)$ possibilities deposit the searcher outside of the maze walls.

For $S \subset [n]$ let $\tau_S = \inf\{k\ge 0:X_k \in S\}$ and let
$\tau_S^+ = \inf\{k>0:X_k \in S\}$.
\begin{lemma}\label{lem:bound_dist}
For $k \ge 1$, if $D[N_{\leq k}^-(v)]$ is $h$-hard then
\[
\pi(v) \le \frac{1}{r^h \cdot \psub{v}{\tau_{[n]\setminus N_{\leq k}^-(v)} \le \tau_v^+}}.
\]
\end{lemma}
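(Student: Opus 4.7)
The plan is to use the identity $\pi(v) = 1/\esub{v}{\tau_v^+}$ and to lower-bound $\esub{v}{\tau_v^+}$ by $r^h \cdot p$, where $p = \psub{v}{\tau_W \le \tau_v^+}$ and $W := [n] \setminus N_{\le k}^-(v)$. The picture is that with probability $p$ an excursion from $v$ visits $W$; any such excursion must then re-enter $N_{\le k}^-(v)$ through $N_k^-(v)$ in order to come back to $v$; and $h$-hardness should force each such re-entry to actually reach $v$ (rather than leak back out to $W$) with probability at most $(1/r)^h$. A strong-Markov argument will then make the number of re-entries dominate a geometric random variable with success probability $(1/r)^h$, giving the desired bound on $\esub{v}{\tau_v^+}$.

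The key technical step is the claim that $q(u) := \psub{u}{\tau_v < \tau_W} \le (1/r)^h$ for every $u \in N_k^-(v)$. To prove this, define $t(u)$ to be the minimum, over directed paths from $u$ to $v$ inside $D[N_{\le k}^-(v)]$, of the number of trap vertices on the path; by $h$-hardness, $t(u) \ge h$ for $u \in N_k^-(v)$. Then $f(u) := (1/r)^{t(u)}$ is a supersolution to the Dirichlet problem that $q$ satisfies (with boundary values $f(v)=1$ and $f \equiv 0$ on $W$). Indeed: at a trap vertex $u$ with unique in-graph out-neighbour $w$, $t(u) = 1+t(w)$ gives the equality $f(u) = (1/r) f(w)$; at a non-trap vertex with $2 \le m \le r$ in-graph out-neighbours $w_1,\ldots,w_m$ (counted with multiplicity), $t(u) = \min_i t(w_i)$ gives $f(u) = \max_i f(w_i)$ and thus $(1/r) \sum_i f(w_i) \le (m/r) f(u) \le f(u)$. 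Optional stopping for the supermartingale $(f(X_i))_{i \ge 0}$ at the hit of $\{v\} \cup W$ then yields $q(u) \le f(u) \le (1/r)^h$.

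With $q_{\max} := \max_{u \in N_k^-(v)} q(u) \le (1/r)^h$, the strong Markov property applied at each re-entry from $W$ into $N_k^-(v)$ shows that, conditional on $\tau_W \le \tau_v^+$, the number $Y$ of such re-entries before the walk finally reaches $v$ stochastically dominates a geometric random variable with success probability $q_{\max}$, and hence has conditional expectation at least $1/q_{\max} \ge r^h$. Since distinct re-entries occur at distinct time steps all $\le \tau_v^+$, we have $\tau_v^+ \ge Y$, and so $\esub{v}{\tau_v^+} \ge p \cdot r^h$, completing the argument. The main obstacle will be cleanly verifying the supersolution inequality in the presence of loops and multi-edges in $D(n,r)$ — so ``out-neighbour'' must mean out-edge counted with multiplicity throughout — together with the (easy) check that the absorption time of the restricted walk at $\{v\} \cup W$ is almost surely finite (needed for optional stopping), which follows from the fact that every $u \in N_{\le k}^-(v)$ has a directed path to $v$ inside $N_{\le k}^-(v)$, guaranteeing a positive per-step exit probability.
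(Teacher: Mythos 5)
Your proof is correct and follows the same overall strategy as the paper: both bound $1/\pi(v)=\esub{v}{\tau_v^+}$ from below by $r^h\cdot\psub{v}{\tau_W\le\tau_v^+}$ (with $W=[n]\setminus N_{\le k}^-(v)$), and both reduce this to the key inequality $\psub{u}{\tau_v<\tau_W}\le r^{-h}$ for every $u\in N_k^-(v)$, followed by essentially the same re-entry/geometric-trials accounting (the paper phrases it via $K$, the number of visits to $N_k^-(v)$ before $\tau_v$, and writes $\esub{v}{\tau_v^+\mid\tau_W\le\tau_v^+}\ge\inf_{w\in W}\esub{w}{\tau_v}\ge\esub{w}{K}\ge r^h$). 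Where you genuinely diverge is in the proof of the key inequality. The paper gives a short, direct argument: since the walk's trajectory from $u$ to $v$ inside the maze contains a simple path, by $h$-hardness it visits at least $h$ trap vertices, and each such visit gives an independent factor $1/r$ by the Markov property. You instead build a superharmonic potential $f(u)=r^{-t(u)}$ (with $t(u)$ the minimum trap count over simple in-maze paths from $u$ to $v$) and invoke optional stopping. Your verification is sound --- the recursions $t(u)=1+t(w)$ at trap vertices and $t(u)=\min_i t(w_i)$ at non-trap vertices do hold (with care, as you note, about multi-edges and about loops in the walk, and with the observation that the hitting time of $\{v\}\cup W$ is a.s.\ finite since every vertex of $N_{\le k}^-(v)$ has an in-maze path of length at most $k$ to $v$). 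The trade-off is that your potential-theoretic route is a bit heavier machinery, requiring the superharmonicity check and optional-stopping justification, whereas the paper's direct trap-vertex counting gets the same bound in two sentences; on the other hand your approach is more systematic and would extend more readily if the ``cost'' at trap vertices were not uniformly $1/r$. Both are valid; the conclusion and the quantitative bound agree.
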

\begin{proof}
 If the maze is $h$-hard then from any $u \in N_k^-(v)$,
\begin{equation}\label{eq:leave_m}
\psub{u}{\tau_v < \tau_{[n]\setminus N_{\leq k}^-(v)}} \le r^{-h}.
\end{equation}
To see this, simply note that in order to have $\tau_v < \tau_{[n]\setminus N_{\leq k}^-(v)}$ the walk must visit at least $h$ vertices
 $w \in N_{\leq k}^-(v)$ with $|E(w,N_{\le k}^-(v))|=1$. But for such a vertex $w$ we have $\psub{w}{X_1 \in N_{\leq k}^-(v)} = 1/r$
 and the inequality follows by the Markov property.

We now use that
\[
\frac{1}{\pi(v)} = \Esub{v}{\tau_v^+} \ge \Esub{v}{\tau_v^+~|~ \tau_{[n]\setminus N_{\leq k}^-(v)} \le \tau_v^+} \cdot \psub{v}{\tau_{[n]\setminus N_{\leq k}^-(v)} \le \tau_v^+}\, .
\]
Let $K$ be the number of visits to $N_k^-(v)$ before the walk visits $v$. Since the inequality~\eqref{eq:leave_m}
holds for all $u \in N_k^-(v)$, it follows that for all $w \in [n]\setminus N_{\leq k}^-(v)$ we have $\Esub{w}{\tau_v} \ge \Esub{w}{K} \ge r^h$. Therefore
\[
\Esub{v}{\tau_v^+~|~ \tau_{[n]\setminus N_{\leq k}^-(v)} \le \tau_v^+} \ge \inf_{w \in [n]\setminus N_{\leq k}^-(v)} \Esub{w}{\tau_v} \ge r^h\, ,
\]
and the result follows.
\end{proof}

\begin{lemma}\label{lem:Msmall}
Fix $\delta>0$ and let $\ell^*=(1-\delta)\log_r{n}$.
Then
\[
\Pr\left(|N_{\le \ell^*}^-(v)|\geq n^{1-\delta/2}\right)= O(n^{-4})\;.
\]
\end{lemma}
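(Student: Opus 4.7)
The plan is to reduce the statement directly to the uniform bound on generation sizes given by Proposition~\ref{prop:UB_in_neighs}, choosing the slack parameter $\alpha$ in that proposition small enough that summing a geometric series over $j \le \ell^*$ still falls safely below $n^{1-\delta/2}$.

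\medskip

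\noindent\textbf{Step 1: choice of $\alpha$.} Pick $\alpha = \alpha(\delta,r) > 0$ small enough that
\[
(1-\delta)\log_r(r+\alpha) < 1 - \tfrac{\delta}{2}\,,
\]
which is possible because the left-hand side tends to $1-\delta$ as $\alpha \downarrow 0$. With such a choice,
\[
(r+\alpha)^{\ell^*} = n^{(1-\delta)\log_r(r+\alpha)} = n^{1-\delta/2-\gamma}
\]
for some $\gamma = \gamma(\delta, r, \alpha) > 0$.

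\medskip

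\noindent\textbf{Step 2: deterministic geometric sum.} Fix this $\alpha$, and let $\cE$ be the event that $d_j^-(v) < (r+\alpha)^j \log_r^2 n$ for every $j \ge 0$. On $\cE$, summing over $0 \le j \le \ell^*$ gives
\[
|N_{\le \ell^*}^-(v)| = \sum_{j=0}^{\ell^*} d_j^-(v) \le \log_r^2 n \sum_{j=0}^{\ell^*}(r+\alpha)^j \le \frac{r+\alpha}{r+\alpha-1}\cdot (r+\alpha)^{\ell^*}\log_r^2 n,
\]
which, using Step~1, is at most $n^{1-\delta/2-\gamma}\cdot O(\log_r^2 n) < n^{1-\delta/2}$ for all sufficiently large $n$.

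\medskip

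\noindent\textbf{Step 3: conclude via Proposition~\ref{prop:UB_in_neighs}.} Proposition~\ref{prop:UB_in_neighs} gives
\[
\Pr(\overline{\cE}) \le \Pr\bigl(\exists w \in [n],\ \exists j \ge 0: d_j^-(w) \ge (r+\alpha)^j \log_r^2 n\bigr) = O(n^{-4}),
\]
and combining with Step~2 yields $\Pr(|N_{\le \ell^*}^-(v)| \ge n^{1-\delta/2}) \le \Pr(\overline{\cE}) = O(n^{-4})$, as desired.

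\medskip

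There is essentially no obstacle here: the only thing to check carefully is that $\alpha$ can indeed be chosen so that $(1-\delta)\log_r(r+\alpha) < 1-\delta/2$, which is automatic by continuity in $\alpha$. Note that one could also get a better bound (say $O(n^{-K})$ for any fixed $K$) by the same argument applied with a tighter version of Proposition~\ref{prop:UB_in_neighs}, but $O(n^{-4})$ is the form we actually need.
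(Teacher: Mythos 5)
Your proof is correct and takes essentially the same route as the paper: define the high-probability event that every generation size is bounded by $(r+\alpha)^j \log_r^2 n$ via Proposition~\ref{prop:UB_in_neighs}, sum the geometric series over $j \le \ell^*$, and verify that for $\alpha$ small enough the total is below $n^{1-\delta/2}$. The only difference is cosmetic --- you spell out the choice of $\alpha$ (the condition $(1-\delta)\log_r(r+\alpha) < 1-\delta/2$) where the paper simply says ``assuming $\alpha$ is small enough with respect to $\delta$''.
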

\begin{proof}
Choose $\alpha>0$ small, and let $A$ be the event that for all $k\geq 0$ and all $v\in [n]$ we have $d^-_k(v)\leq (r+\alpha)^{k}\log^2{n}$.
By Proposition~\ref{prop:UB_in_neighs}, we have $\Pr(\overline{A})=O(n^{-4})$.
Assuming $\alpha$ is small enough with respect to $\delta$, we also have
\[
|N_{\le \ell^*}^-(v)|=d^-_{\leq \ell^*}(v)=\sum_{k=0}^{\ell^*} d^-_{k}(v) \leq (r+\alpha)^{\ell^*+1}\log^2{n}< n^{1-\delta/2}\, ,
\]
so
\[
\Pr(|N_{\le \ell^*}^-(v)| \ge n^{1-\delta/2})\leq \Pr(|N_{\le \ell^*}^-(v)| \ge n^{1-\delta/2}\mid A)+\Pr(\overline{A})  =  O(n^{-4})\;.\qedhere
\]
\end{proof}

\begin{proposition}\label{prop:hard}
Fix $\delta>0$ and let  $\ell^*=(1-\delta)\log_r n$ and $h=(1-2\delta)\log_r n$. Then
\[
\Pr\left(\bigcap_{v \in [n]} D[N_{\le \ell^*}^-(v)]~\mbox{ is $h$-hard}\right) = 1-O(n^{-3})\, .
\]
\end{proposition}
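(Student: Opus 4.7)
Fix $\delta>0$ and a vertex $v\in[n]$; write $S=N^-_{\le \ell^*}(v)$ and $s=|S|$. My plan is to show $\Pr\bigl(D[S]\text{ is not }h\text{-hard}\bigr) = O(n^{-4})$ for each fixed $v$; a union bound over $v\in[n]$ then yields the proposition. By Lemma~\ref{lem:Msmall} I can restrict attention to the event $\mathcal{A}(v)=\{s\le n^{1-\delta/2}\}$, whose complement has probability $O(n^{-4})$. I will condition on the iBFS exploration of $v$'s in-neighborhood up to depth $\ell^*$; by Corollary~\ref{cor:ibfs_stoc} applied at $j=\ell^*$, for each $w\in S\setminus\{v\}$ the value $|E(w,S)|$ is stochastically dominated by $1+\Bin(r-1,s/n)$, \emph{independently} across $w$.

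Let $B:=\{w\in S:|E(w,S)|\neq 1\}$ denote the set of \emph{branch} vertices. Since every $w\in S\setminus\{v\}$ has $|E(w,S)|\ge 1$, a vertex is \emph{single} (i.e.\ $|E(w,S)|=1$) iff it lies in $S\setminus B$; hence $D[S]$ is $h$-hard iff every simple directed path $P$ from $N^-_{\ell^*}(v)$ to $v$ in $D[S]$ satisfies $|V(P)\cap B|\le |V(P)|-h$. Setting $q:=(r-1)s/n \le (r-1)n^{-\delta/2}$ and using the conditional independence of the indicators $\mathbb{1}[w\in B]$, for any fixed simple path $P$ of length $L$ one has $|V(P)\cap B|\preceq 1+\Bin(L,q)$ (the $+1$ absorbing the possible inclusion of $v$), so by a standard Chernoff/binomial tail bound
\[
\Pr\bigl(|V(P)\cap B|\ge L+2-h\bigr) \;\le\; \binom{L}{L+1-h}q^{L+1-h} \;\le\; \Bigl(\tfrac{eLq}{L+1-h}\Bigr)^{L+1-h}.
\]

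I then union-bound over simple paths. The number of simple directed paths of length $L$ in $D[S]$ from $N^-_{\ell^*}(v)$ to $v$ is at most $s\cdot r^L$ (at most $s$ starting vertices in $N^-_{\ell^*}(v)$, and at most $r$ out-edges of each vertex at each step), so
\[
\Pr\bigl(D[S]\text{ not }h\text{-hard}\bigr) \;\le\; \sum_{L=\ell^*}^{s-1} s\,r^L\,\Bigl(\tfrac{eLq}{L+1-h}\Bigr)^{L+1-h}.
\]
At the critical length $L=\ell^*$ one has $L+1-h=\delta\log_r n+1$ and $eLq/(L+1-h)=O(n^{-\delta/2})$; raising to the power $L+1-h$ gives a per-path bound $\exp\!\bigl(-(\delta^2/(2\log r)+o(1))\log^2 n\bigr)$, which is super-polynomially small and easily swamps the polynomial path-count $s\,r^{\ell^*}=n^{2-3\delta/2}$. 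For each unit increment $L\mapsto L+1$, the path count multiplies by $r$ while the per-path bound multiplies by (essentially) $q$, and since $rq\le r(r-1)n^{-\delta/2}\to 0$ the summands shrink geometrically past $L=\ell^*$; the whole sum is dominated by its first term and remains super-polynomially small.

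The main obstacle is the careful tracking of $\log^2 n$-scale exponents at $L=\ell^*$: the positive $\log^2 n$-coefficient contribution from the combinatorial piece inside $\bigl(eLq/(L+1-h)\bigr)^{L+1-h}$ must be matched against the negative contribution from $q^{L+1-h}$, and their algebraic sum must be negative by a margin of $\delta^2/(2\log r)$ (quadratic in $\delta$, reflecting the ``width'' $\ell^*-h=\delta\log_r n$). Once this is verified at $L=\ell^*$, the $L>\ell^*$ case is immediate from the geometric decay $rq\to 0$. Summing the super-polynomially small terms over $L$ and then union-bounding over $v\in[n]$ yields the $O(n^{-3})$ failure probability stated in the proposition.
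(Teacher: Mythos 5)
Your high-level strategy matches the paper's: restrict to $\{|N^-_{\le \ell^*}(v)|\le n^{1-\delta/2}\}$ via Lemma~\ref{lem:Msmall}, use the conditional stochastic domination of $|E(w,S)|$ from Corollary~\ref{cor:ibfs_stoc} to control the branch set $B$, union-bound over candidate paths indexed by a starting vertex in $N^-_{\ell^*}(v)$ together with an out-edge string in $[r]^L$, and note that the per-candidate binomial tail decays like $e^{-\Theta(\delta^2)\log^2 n/\log r}$, which swamps the merely polynomial candidate count. Your final asymptotics are correct. (One small imprecision: only $q^{L+1-h}$ contributes at the $\log^2 n$ scale; the combinatorial factor $(eL/(L+1-h))^{L+1-h}$ is just polynomial in $n$ since $eL/(L+1-h)\to e(1-\delta)/\delta$, so the ``matching at the $\log^2 n$ scale'' you describe between the two pieces is not actually occurring.)

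There is, however, a genuine gap, and it is exactly the point where the paper's own proof pauses and says: ``To make the preceding argument rigorous, we need to deal with the fact that the set of paths from $T^-_{\ell^*}$ to $v$ are random.'' You assert the binomial bound ``for any fixed simple path $P$,'' but you then union-bound over the simple paths \emph{in} $D[S]$, which are random objects determined by the same non-tree edges that produce $B$. For a candidate $(w,s)$, the joint event you must bound is ``$P(w,s)$ is a simple path in $D[S]$ \emph{and} $|V(P(w,s))\cap B|>L-h$'', and the two conjuncts are not independent: whenever the walk takes a non-parent out-edge at step $i$ and nonetheless stays in $S$, the very edge witnessing that the walk continues forces $Y(p_i)>0$, i.e.\ $p_i\in B$. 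So you cannot justify multiplying a count of paths by a per-path probability computed as if the path were deterministic. The paper repairs this by revealing the walk $p_0,p_1,\dots$ edge-by-edge and showing that, conditionally on $(p_0,\dots,p_i)$ being a simple path in $D[S]$ and on $(Y(p_j):j<i)$, the variable $Y(p_i)$ is \emph{still} dominated by $\Ber(rn^{-\delta/2})$ — the key point being that at the moment $Y(p_i)$ is queried, the non-parent out-edges of $p_i$ have not yet been examined, because $p_i\notin\{p_0,\dots,p_{i-1}\}$. Your write-up needs this iterated-conditioning step (or an equivalent decoupling) to connect the per-candidate binomial bound to the union over the random path set.
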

\begin{proof}
Recall from the introduction that $D=D(n,r)$ has edges $\{(i,L_{i,j}): (i,j) \in [n] \times [r]\}$.
Fix $v \in [n]$.
For each $k \ge 1$ let $T_{\leq k}^-=T_{\leq k}^-(D,v)$ be the iBFS tree of $D[N_{\le k}^-(v)]$ described in Section~\ref{sec:preliminaries}.
For $w \in [n]$, $w \ne v$ let $Y(w) = |E(w,N_{\le \ell^*}^-(v))|-1$, and let $Y(v) = |E(v,N_{\le \ell^*}^-(v))|$.
Observe that there may be multiple edges from $w$ to a vertex $u \in N^+(w)$, so $Y(w)$ may not equal $|N^+(w) \cap N_{\le \ell^*}^-(v)|-1$.

For $w \ne v$, the parent of $w$ in $T^-_{\le \ell^*}$ lies in $N^+(w) \cap N_{\le \ell^*-1}^-(v)\subset N_{\le \ell^*}^-(v)$, so $Y(w) \ge 0$ for all $w \in N_{\le \ell^*}^-(v)$.
The key insight of the proof is that if $D[N_{\le \ell^*}^-(v)]$ is not $h$-hard then there is some simple path $P$ in $D[N_{\le \ell^*}^-(v)]$ from $N_{\ell^*}^-(v)$ to $v$ along which at least $|P|-h$ vertices $w$ have $Y(w) > 0$. To show that $\Pr(D[N_{\le \ell^*}^-(v)]\mbox{ is not $h$-hard})$ is small it thus suffices to show that with high probability no such path exists.

The sets $\{Y(w) : w \in N_{\le \ell^*}^-(v)\}$ are conditionally independent given $T^-{\le \ell^*}$. Furthermore,
given $T^-_{\le \ell^*}$, by Corollary~\ref{cor:ibfs_stoc} we also have
$Y(w) \preceq \Bin(r-1,|N_{\le \ell^*}^-(v)|/n)$ for all  $w \in N_{\le \ell^*}^-(v)$.

Let $A$ be the event that $|N_{\le \ell^*}^-(v)|\leq n^{1-\delta/2}$.
For $S \subset [n]$, it follows that on $A$ the random variable $B(S)=|\{w \in S: Y(w) > 0\}|$ is stochastically dominated by
$\Bin(|S|,(r-1)n^{-\delta/2})$. Furthermore, by Lemma~\ref{lem:Msmall} we have $\Pr(A)=1-O(n^{-4})$.

We would like to conclude as follows. Let $S$ be any path from $T^-_{\ell^*}$ (the last generation of $T^-_{\le \ell^*}$) to $v$.
The arguments of the preceding paragraphs suggest the bound
\[
\Pr(B(S) \ge |S|-h~|~A) \le \Pr(\Bin(|S|,rn^{-\delta/2}) \ge |S|-h) \le 2^{|S|} (rn^{-\delta/2})^{|S|-h}.
\]
On $A$ we have $|T^-_{\ell^*}| < |T^-_{\le \ell^*}| \le n^{1-\delta/2}$, so there are less than $m\cdot r^t$ paths of length $t$ from $T^-_{\ell^*}$ to $v$. Now use the preceding inequality and a union bound over paths of length $t$ and over $t \ge \ell^*$.

To make the preceding argument rigorous, we need to deal with the fact that the set of paths from $T^-_{\ell^*}$ to $v$ are random (even conditional on $T^-_{\ell^*}$, as such paths may follow edges of $D[N_{\le \ell^*}^-(v)]$ which are not edges of $T^-_{\ell^*}$).
To do so, condition on $T^-_{\le \ell^*}$, fix $w \in T^-_{\ell^*}=N_{\ell^*}^-(v)$  and a string $s=s_1s_2\dots s_{t} \in [r]^t$ of length $|s|=t$. This string uniquely specifies a path $P=P(w,s)=(p_i(w,s),0 \le i \le t)$ in $D$: at step $i$ follow the $s_i$-th edge leaving the current vertex.
Formally, we let $p_0=w$ and, for $1 \le i \le t$, let $p_i=L_{p_{i-1},s_i}$.

We reveal the path $P$ edge-by-edge, starting from $w$. By the independence of the sets $Y(u)$, for each $0 \le i < t$, given that the sub-path $p_0,\ldots,p_i$ is simple (in particular $p_i \not \in \{p_0,\ldots,p_{i-1}\}$) then $Y(p_i)$ is conditionally independent of $p_0,\ldots,p_{i-1}$ and
of $Y(p_0),\ldots,Y(p_{i-1})$. It follows that
\[
\Pr(Y(p_i) > 0~|~T_{\le \ell^*},A, (p_0,\ldots,p_i)~\mbox{ simple path in } D[N_{\le \ell^*}^-(v)], (Y(p_j),j<i)) \le rn^{-\delta/2}\, .
\]
By repeated conditioning, we obtain
\begin{align*}
& \Pr(P(w,s)\mbox{ is a simple path in }D[N_{\le \ell^*}^-(v)], B(P(w,s)) \ge t-h~|~T_{\le \ell^*},A) \\
\le&  \Pr(\Bin(t,rn^{-\delta/2}) \ge t-h)\\
\le& 2^{t} (rn^{-\delta/2})^{t-h}\; .
\end{align*}

Now let $A_w(t)$ be the event that there is a simple path $P$ of length $t$ starting from $w$ and staying within $D[N_{\le \ell^*}^-(v)]$, for which $B(P)\geq t-h$. All possible such paths are described by a string $s \in [r]^t$, so by the preceding inequality and a union bound,
\[
\Pr(A_w(t)~|~T_{\le \ell^*},A) \le (2r)^{t} (rn^{-\delta/2})^{t-h} \le \frac{(2r^2)^t}{n^{\delta(t-h)/2}}\, .
\]

Since $\ell^*-h\geq  \delta \log{n}$, this yields that
\[
\Pr\left(\bigcup_{w \in N_{\ell^*}^-(v)} \bigcup_{t \ge \ell^*} A_w(t)~|~T_{\le \ell^*},A\right) \leq n \frac{(2r^2)^{\ell^*}}{n^{\delta(\ell^*-h)/2}} \sum_{j\geq 0} \left(\frac{2r^2}{n^{\delta/2}}\right)^j= O(n^{-4})\, .
\]
Since $\Pr(A) = 1-O(n^{-4})$ this bound also holds unconditionally. But, as described in the first two paragraphs of the proof,
\[
\{D[N_{\le \ell^*}^-(v)]\mbox{ is not $h$-hard}\} \subset \bigcup_{w \in N_{\ell^*}^-(v)} \bigcup_{t \ge {\ell^*}} A_w(t)\, ,
\]
so $\Pr(D[N_{\le \ell^*}^-(v)]\mbox{ is not $h$-hard}) = O(n^{-4})$. A union bound over $v \in [n]$ completes the proof.
\end{proof}
Before proving our bounds on $\pi_{\max}$ we require one final result, which says that with high probability there is at least one escape route along each path from $N^-_{\log\log n}(v)$ to $v$, for all $v$.
\begin{lemma}\label{lem:onehard}
For $v \in [n]$ let $E_v$ be the event that each path from $N^-_{\log\log n}(v)$ to $v$ contains at least one vertex $w$ with $|N^+(w) \cap N^-_{\le \ell^*}(v)|=1$.
Then
\[
\Pr\left(\bigcap_{v \in [n]} E_v \right) = 1-O(n^{-3})\, .
\]
\end{lemma}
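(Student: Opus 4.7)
The plan is to fix $v\in[n]$, establish $\Pr(\overline{E_v})=O(n^{-4})$, and take a union bound over $v$. All paths below are simple. If $v$ itself satisfies $|N^+(v)\cap N^-_{\le\ell^*}(v)|=1$ then $E_v$ holds automatically, so we may assume $v$ is not narrow. For any other vertex $w$ on a path from $N^-_{\log\log n}(v)$ to $v$, one has $w\in N^-_{\le\log\log n}(v)\subseteq N^-_{\le\ell^*}(v)$, and the successor of $w$ on the path gives $|N^+(w)\cap N^-_{\le\ell^*}(v)|\ge 1$; so ``$w$ not narrow'' just means $|N^+(w)\cap N^-_{\le\ell^*}(v)|\ge 2$.

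The key step is a suffix trick. Any simple path $P$ from $u\in N^-_{\log\log n}(v)$ to $v$ has length at least $\log\log n$, and its last $\log\log n$ edges visit $\log\log n$ distinct non-$v$ vertices $p_i$ satisfying $\dist(p_i,v)\le\log\log n$, hence $p_i\in N^-_{\le\log\log n}(v)$. Defining
\[
W:=\bigl\{w\in N^-_{\le\log\log n}(v)\setminus\{v\}:|N^+(w)\cap N^-_{\le\ell^*}(v)|\ge 2\bigr\},
\]
failure of $E_v$ forces all $\log\log n$ suffix vertices into $W$, so $|W|\ge\log\log n$. Thus $\Pr(\overline{E_v})\le\Pr(|W|\ge\log\log n)$.

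To estimate the latter, fix $\alpha>0$ small and work on the event $A_v=\{d^-_{\le\ell^*}(v)\le n^{1-\delta/2}\}\cap\{d^-_{\le\log\log n}(v)\le M\}$, where $M=(r+\alpha)^{\log\log n}\log^2 n=(\log n)^{O(1)}$; Lemma~\ref{lem:Msmall} and Proposition~\ref{prop:UB_in_neighs} together give $\Pr(\overline{A_v})=O(n^{-4})$. Conditioning on $(N^-_i(v):0\le i\le\ell^*)$, Corollary~\ref{cor:ibfs_stoc} tells us that the variables $|E(w,N^-_{\le\ell^*}(v))|$ for $w\in N^-_{\le\log\log n}(v)\setminus\{v\}$ are conditionally independent and each dominated by $1+\Bin(r-1,n^{-\delta/2})$ on $A_v$. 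Since $|N^+(w)\cap N^-_{\le\ell^*}(v)|\le|E(w,N^-_{\le\ell^*}(v))|$, the events $\{w\in W\}$ are independent Bernoullis of parameter at most $(r-1)n^{-\delta/2}$, so $|W|\preceq\Bin(M,(r-1)n^{-\delta/2})$.

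A direct binomial tail bound now yields
\[
\Pr(|W|\ge\log\log n)\le\binom{M}{\log\log n}\bigl((r-1)n^{-\delta/2}\bigr)^{\log\log n}=\exp\!\Bigl(O\bigl((\log\log n)^2\bigr)-\tfrac{\delta}{2}(\log\log n)(\log n)\Bigr)=o(n^{-4}),
\]
and a union bound over $v\in[n]$ delivers the claimed $1-O(n^{-3})$ bound. I do not anticipate any real difficulty beyond identifying the suffix reduction; once that is in place, the remainder is a routine Chernoff-type estimate in which the tiny $n^{-\delta/2}$ success probability easily overwhelms the polylogarithmic number of trials.
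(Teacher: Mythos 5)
Your proof is correct and takes a genuinely different route. The paper omits the proof but says it ``follows the same lines as that of Proposition~\ref{prop:hard},'' i.e.\ the string-enumeration device: fix $w\in N^-_{\log\log n}(v)$, parametrize candidate paths by strings $s\in[r]^t$, bound the probability that $P(w,s)$ is a simple path in $D[N^-_{\le\ell^*}(v)]$ with no ``narrow'' vertex, and union-bound over $w$, $s$ and $t\ge\log\log n$. Your suffix trick replaces all of that with a single deterministic reduction: on $\overline{E_v}$, the last $\log\log n$ non-$v$ vertices of an offending simple path are distinct, lie in $N^-_{\le\log\log n}(v)$, have successors in $N^-_{\le\ell^*}(v)$, and hence all belong to $W$, so $|W|\ge\log\log n$; after that, Corollary~\ref{cor:ibfs_stoc} and a binomial tail finish the job. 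This sidesteps the measurability issue that drives the paper to string enumeration (the set of paths in $D[N^-_{\le\ell^*}(v)]$ is random even given the iBFS tree), and is cleaner; the trade-off is that it is tied to the ``one narrow vertex'' case and would not extend to general $h$-hardness as in Proposition~\ref{prop:hard}, where the $t-h$ non-narrow vertices of a bad path cannot be confined to a polylog-size window and the expectation of the analogue of $|W|$ over $N^-_{\le\ell^*}(v)$ is polynomial. One small imprecision to fix in the write-up: the opening assertion ``for any other vertex $w$ on a path from $N^-_{\log\log n}(v)$ to $v$, one has $w\in N^-_{\le\log\log n}(v)$'' is false for a general simple path, which can wander far from $v$ before returning; it is precisely the suffix trick, stated correctly in your next paragraph, that supplies the valid substitute, so that sentence should be rephrased to refer only to the suffix vertices.
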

Note that the event $E_v$ is not the event that $D[N^-_{\le \log\log n}(v)]$ is $1$-hard: in $E_v$ we require the vertex $w$ to send the searcher
not outside of $D[N^-_{\le \log\log n}(v)]$ but rather out of the larger maze $D[N^-_{\le \ell^*}(v)]$.
The proof of Lemma~\ref{lem:onehard} follows the same lines as that of Proposition~\ref{prop:hard} but is simpler, and is omitted.
\begin{theorem}\label{thm:pimax}
For every $\eps>0$, with high probability we have
\[
\frac{1}{n} \leq \pi_{\max} \leq \frac{1}{n^{1-\eps}}\;,
\]
\end{theorem}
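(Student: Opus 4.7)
The lower bound $\pi_{\max} \ge 1/n$ is immediate: $\pi$ is a probability distribution supported on $V(D_0) \subseteq [n]$, so some entry is at least $1/|V(D_0)| \ge 1/n$. For the upper bound, fix $\eps \in (0,1)$ and set $\delta := \eps/3$, $\ell^* := (1-\delta)\log_r n$, $h := (1-2\delta)\log_r n$. Proposition~\ref{prop:hard} asserts that whp $D[N^-_{\le \ell^*}(v)]$ is $h$-hard for every $v \in [n]$. Writing $A_v := N^-_{\le \ell^*}(v)$, Lemma~\ref{lem:bound_dist} then yields
\[
\pi(v) \le \frac{1}{r^h \cdot p_v}, \qquad p_v := \mathbf{P}_v\!\pran{\tau_{[n]\setminus A_v} \le \tau_v^+}.
\]
Since $r^h = n^{1-2\delta}$, it suffices to show that whp $p_v \ge 1/r$ for every $v$; this yields $\pi_{\max} \le r \cdot n^{2\delta-1} \le n^{\eps-1}$ for $n$ large.

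To lower bound $p_v$, invoke Lemma~\ref{lem:onehard}: whp, for every $v$, each simple path from $N^-_{\log\log n}(v)$ to $v$ contains a ``loose'' vertex $w$ with $|N^+(w) \cap A_v| = 1$. Applying this to each single-edge path $u \to v$ with $u \in N^-(v)$ produces a dichotomy. Either $v$ itself is loose, with unique $A_v$-out-neighbour $w^\star$; provided not all $r$ edges of $v$ point to $w^\star$, at least one out-edge leaves $A_v$ and $p_v \ge 1/r$ by taking one step. Or every $u \in N^-(v)$ is loose, and since $v \in N^+(u) \cap A_v$, the unique $A_v$-out-neighbour of $u$ must be $v$. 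In this second case, let $T := \inf\{k \ge 1 : X_k \in N^-(v) \cup ([n]\setminus A_v)\}$; any return to $v$ is preceded by a visit to some $u \in N^-(v)$, so $T < \tau_v^+$ (modulo self-loops at $v$, discussed below). Conditional on $X_T = u \in N^-(v)$, looseness of $u$ forces $X_{T+1} \in \{v\} \cup ([n]\setminus A_v)$ with $\mathbf{P}[X_{T+1} \notin A_v \mid X_T = u] = (r - m(u,v))/r \ge 1/r$ whenever $m(u,v) \le r-1$. Combining the possibilities $X_T \notin A_v$ and $X_T \in N^-(v)$ then gives
\[
p_v \ge \mathbf{P}_v[X_T \notin A_v] + \tfrac{1}{r}\mathbf{P}_v[X_T \in N^-(v)] \ge \tfrac{1}{r}.
\]

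The argument depends on two technical provisos: no $u \in N^-(v)$ has all $r$ of its out-edges going to $v$ (so $m(u,v) \le r-1$ in Case~B), and $v$ is not a loop vertex (so $T < \tau_v^+$); the analogous requirement that not all edges of $v$ target a single $A_v$-vertex in Case~A is controlled by the same type of bound. For $r \ge 3$ all such ``bad'' events are rare: a union bound gives $\mathbf{P}[\exists\,(u,v)\colon m(u,v) = r] \le n^{2-r} = o(1)$ and $\mathbf{P}[\ESL] = O(n^{1-r}) = o(1)$. For $r = 2$ the expected number of each kind of bad configuration is $\Theta(1)$, which is the main obstacle. A workable refinement is to observe that whenever $u$ has all edges pointing to $v$ one has $\pi(u) \le \pi(v)$, so an upper bound on $\pi(v)$ transfers to $\pi(u)$; combined with an ad hoc accounting for self-loop transitions at $v$ (which multiply the return probability by at most a constant factor) inside the Case~B analysis, this preserves the conclusion $p_v = \Omega(1)$ whp and completes the argument.
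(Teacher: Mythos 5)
Your high-level skeleton matches the paper: you correctly identify Lemma~\ref{lem:bound_dist} and Proposition~\ref{prop:hard} as the engine, reduce to showing $p_v := \mathbf{P}_v\bigl(\tau_{[n]\setminus A_v}\le\tau_v^+\bigr)$ is bounded below by a constant uniformly over $v$, and note the lower bound $\pi_{\max}\ge 1/n$ is trivial. But the step where you lower bound $p_v$ has a genuine gap.

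Lemma~\ref{lem:onehard} defines $E_v$ as the event that each path \emph{from $N^-_{\log\log n}(v)$ to $v$} contains a loose vertex $w$ with $|N^+(w)\cap N^-_{\le\ell^*}(v)|=1$. Vertices of $N^-_{\log\log n}(v)$ are at distance exactly $\log\log n$ from $v$, so those paths have length at least $\log\log n$; the single-edge paths $u\to v$ with $u\in N^-(v)$ are simply not in the scope of the lemma, and the loose vertex guaranteed by $E_v$ may sit anywhere along the long path, not at $u$ or $v$. Consequently the dichotomy ``either $v$ is loose, or every $u\in N^-(v)$ is loose'' does not follow from Lemma~\ref{lem:onehard}, and moreover is simply false for a positive density of $v$. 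Indeed, given $A_v$ with $|A_v|\approx n^{1-\delta}$, the chance that some $u\in N^-(v)$ has a second out-edge landing in $A_v$ (so $u$ is not loose) is of order $|A_v|/n\approx n^{-\delta}$, while $v$ itself is loose with probability only about $r|A_v|/n\approx n^{-\delta}$ (typically $|N^+(v)\cap A_v|=0$); so the dichotomy fails for a given $v$ with probability $\Theta(n^{-\delta})$, and the expected number of bad $v$ is $\Theta(n^{1-\delta})\gg 1$. No union bound can rescue this. (This is exactly why Proposition~\ref{prop:hard} and Lemma~\ref{lem:onehard} work with paths of length $\ge \log\log n$: they beat the $r^t$ union bound over length-$t$ paths by a factor $n^{-\delta(t-h)/2}$, which is only available when $t$ is a positive fraction of $\log_r n$.) The paper sidesteps this entirely by arguing at a different scale: whp $N^+(v)\not\subset N^-_{\le\log\log n}(v)$, so $\mathbf{P}_v(X_1\notin N^-_{\le\log\log n}(v))\ge 1/r$; then from any $w$ outside $N^-_{\le\log\log n}(v)$, the event $E_v$ applied to the long paths it must traverse gives escape probability $\ge 1/r$, whence $p_v\ge r^{-2}$. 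Your Case~A also leaves the sub-case where all $r$ edges of $v$ are parallel to $w^\star\in A_v$ unresolved, and the proposed $r=2$ patch is too vague to verify; the paper's route through $N^-_{\le\log\log n}(v)$ cleanly avoids both issues.
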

\begin{proof}
The lower bound holds deterministically since $\sum_{v\in [n]} \pi(v)=1$.

To prove the upper bound, fix $v\in [n]$ and  $\delta \in (0,\eps/2)$.
Write $\ell^*=(1-\delta)\log_r{n}$,
and $T_{\leq \ell^*}= T_{\leq \ell^*}(D,v)$.

First, note that if $N^+(v) \setminus N^-_{\le \log\log n}(v) \ne \emptyset$ then $\psub{v}{X_1 \not \in N^-_{\le \log \log n}(v)} \ge 1/r$.
We have
\begin{align*}
& \Pr(N^+(v)\setminus N^-_{\le \log\log n}(v) = \emptyset) \\
\le & \Pr(|N^-_{\le \log \log n}(v)| \ge n^{1/3}) +
\Pr(N^+(v)\setminus N^-_{\le \log\log n}(v) = \emptyset \mid |N^-_{\le \log \log n}(v)| < n^{1/3}) \\
 =& O(n^{-3}) + O(n^{-4/3})\, ,
\end{align*}
the first by Proposition~\ref{prop:UB_in_neighs} and the second by Corollary~\ref{cor:ibfs_stoc}.

Next, if the event $E_v$ from Lemma~\ref{lem:onehard} occurs then for all $w \not \in N^-_{\le \log\log n}(v)$ we have $\psub{w}{\tau_{[n]\setminus N^-_{\le \ell^*}(v)} \le \tau_v} \ge 1/r$.
By the Markov property, it follows that if $N^+(v) \setminus N^-_{\le \log\log n}(v) \ne \emptyset$ and $E_v$, then
\[
\psub{v}{\tau_{[n]\setminus N^-_{\le \ell^*}(v)} \le \tau_v^+}\geq \frac{1}{r} \psub{v}{X_1 \not \in N^-_{\le \log \log n}(v)} \ge \frac{1}{r^2}\, . 
\]
By the preceding paragraph and Lemma~\ref{lem:onehard}, we thus have $\psub{v}{\tau_{[n]\setminus N^-_{\le \ell^*}} \le \tau_v^+} \ge r^{-2}$ with $\Pr$-probability $1-O(n^{-4/3})$.

Finally, if in addition $D[N_{\le \ell^*}^-(v)]$ is $h$-hard then by Lemma~\ref{lem:bound_dist} we obtain that $\pi(v) \le r^{h-2} \le (1+o(1))n^{-(1-2\delta)}$.
By Proposition~\ref{prop:hard} we thus have $\pi(v) \le (1+o(1))n^{-(1-2\delta)}$ with probability $1-O(n^{-4/3})$. A union bound over $v\in [n]$ then completes the proof.
\end{proof}

\subsection{Bounding \texorpdfstring{$\pi_{\min}$}{pimin}}

We bound $\pi_{\min}$ from below using the following lemma.
\begin{lemma}\label{lem:diam}
Let $D$ be any $r$-out regular digraph. If $D$ is ergodic and has diameter $\diam(D) \leq d$, then
\begin{equation*}
\pi_{\min} \geq \frac{1}{1 + d r^d} \enspace.
\end{equation*}
\end{lemma}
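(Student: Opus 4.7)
The plan is to use the standard identity $\pi(v) = 1/\Esub{v}{\tau_v^+}$ (valid for ergodic chains), and bound the mean return time by $1 + dr^d$ uniformly in $v$. The main work is to show that for any two vertices $u \neq v$, we have $\Esub{u}{\tau_v} \le d r^d$; after that, a single step of first-step analysis from $v$ yields
\[
\Esub{v}{\tau_v^+} = 1 + \sum_w \psub{v}{X_1 = w} \Esub{w}{\tau_v} \le 1 + d r^d,
\]
since $\Esub{v}{\tau_v} = 0$ and $\Esub{w}{\tau_v} \le d r^d$ for $w \ne v$.

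To prove the uniform bound $\Esub{u}{\tau_v} \le d r^d$, I would first establish the one-shot estimate: for every vertex $w$,
\[
\psub{w}{\tau_v \le d} \ge r^{-d}.
\]
This follows immediately from the diameter hypothesis, since there is a directed path $w = w_0, w_1, \ldots, w_k = v$ with $k \le d$, and at each of the $r$ out-edges of a vertex is chosen with probability $1/r$ by the simple random walk, so the probability that the walk follows this specific path is at least $r^{-k} \ge r^{-d}$ (multi-edges only help). Note that in $r$-out regular digraphs every vertex has out-degree exactly $r$, so this bound is clean.

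Next, I would apply this bound block-by-block. By the Markov property, for each integer $j \ge 0$,
\[
\psub{u}{\tau_v > (j+1)d \mid \tau_v > jd} \le 1 - r^{-d},
\]
hence $\psub{u}{\tau_v > jd} \le (1 - r^{-d})^j$. Summing the tail,
\[
\Esub{u}{\tau_v} = \sum_{t \ge 0} \psub{u}{\tau_v > t} \le d \sum_{j \ge 0} \psub{u}{\tau_v > jd} \le d \sum_{j \ge 0} (1-r^{-d})^j = d r^d,
\]
which is the desired estimate. Combining with the first-step analysis above yields $\pi_{\min} = 1/\max_v \Esub{v}{\tau_v^+} \ge 1/(1 + d r^d)$.

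There is no real obstacle here: the argument is a textbook geometric-trials hitting-time bound, and the only thing to verify carefully is that the $r$-out regularity of $D$ gives the clean lower bound $1/r$ on each transition probability (so that following a diameter-realizing path has probability at least $r^{-d}$). The ergodicity assumption is used only to justify the formula $\pi(v) = 1/\Esub{v}{\tau_v^+}$.
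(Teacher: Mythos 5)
Your proof is correct, but it takes a genuinely different route from the paper's. You go through Kac's return-time formula $\pi(v) = 1/\Esub{v}{\tau_v^+}$ and establish the uniform hitting-time bound $\Esub{u}{\tau_v} \le d r^d$ via the standard geometric-trials argument (``in every block of $d$ steps, hit $v$ with probability at least $r^{-d}$''). The paper instead works directly with the stationarity equation: for each $k \in [d]$ and $u \in N_k^-(v)$, there is at least one length-$k$ path from $u$ to $v$, and each such path is followed with probability exactly $r^{-k}$, so $\pi(v) \ge \sum_{u \in N_k^-(v)} \pi(u)/r^k$; averaging over $k \in [d]$ and using $\bigcup_{k=1}^d N_k^-(v) = V(D)\setminus\{v\}$ gives $\pi(v) \ge (1-\pi(v))/(dr^d)$, which rearranges to the claimed bound. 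The paper's version is slightly more self-contained (no appeal to Kac's formula or to tail-sum manipulations), while yours is arguably more modular since the intermediate fact $\max_{u,v}\Esub{u}{\tau_v} \le dr^d$ is independently useful; both exploit $r$-out regularity in the same essential way, namely that every transition has probability exactly $1/r$ so any fixed path of length $k$ has probability $r^{-k}$.
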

\begin{proof}
Fix $v \in V(D)$.
For any $ k\in [d]$ and $u \in N^-_k(D,v)$, let $K(u,k)\geq 1$ be the number of directed paths of length $k$ from $u$ to $v$. Observe that the probability of following each such path is precisely $r^{-k}$, since $D$ is $r$-out regular. Furthermore, since $\pi$ is stationary, it satisfies
\begin{equation*}
\pi(v) \geq \sum_{u \in N^-_k(v)} \pi(u) \cdot \frac{K(u,k)}{r^k}\geq \sum_{u \in N^-_k(v)} \frac{\pi(u)}{r^k} \enspace.
\end{equation*}
By averaging over $ k\in [d]$ we have
\begin{equation*}
\pi(v) \geq \frac{1}{d} \sum_{k=1}^d \sum_{u \in N^-_k (v)} \frac{\pi(u)}{r^k}
\geq \frac{1 - \pi(v)}{d r^d} \enspace ,
\end{equation*}
the last inequality since $\diam(D) \le d$ so $\bigcup_{k=1}^d N_k^-(v) = V(D)\setminus \{v\}$. The lemma follows.
\end{proof}

\begin{theorem}\label{thm:pimin}
For every $\eps>0$ we have
$$
\frac{1}{n^{1+\eta_r+\eps}} \leq \pi_{\min} \leq \frac{1}{n^{1+\eta_r-\eps}}\;,
$$
with high probability.
\end{theorem}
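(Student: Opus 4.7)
The lower bound follows immediately from Lemma~\ref{lem:diam} combined with the diameter bound of Theorem~\ref{thm:main}. Fix $\eps > 0$; whp $\diam(D_0) \le d := (1+\eta_r+\eps/2)\log_r n$, and Lemma~\ref{lem:diam} then gives $\pi_{\min} \ge 1/(1+dr^d) \ge n^{-(1+\eta_r+\eps)}$ for $n$ sufficiently large.

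For the upper bound, the plan is to exhibit whp a vertex $v \in V(D_0)$ with $\pi(v) \le n^{-(1+\eta_r-\eps)}$, namely a flag: the tip of a ``narrow, slippery tower''. Set $\eps' = \eps/4$. By Corollary~\ref{lem:many_towers} and Lemma~\ref{lem:prop_flag}, whp $F(\eps') \cap V(D_0) \ne \emptyset$; fix such a $v$. By definition of flag, $T := D[N_{\le k_1}^-(v)]$ is a tree rooted at $v$ with $k_1 := k_1(v) \ge k^* := (\eta_r - \eps'/2)\log_r n$ and $|V(T)| \le \log^7 n$. The key analytic step is to iterate the stationarity identity $\pi(v) = \sum_u \pi(u) P(u,v)$ down this tree. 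Two observations justify the iteration: (i) because $T$ has no self-loops and no multi-edges, every in-neighbor $w$ of $v$ contributes exactly $P(w,v) = 1/r$; (ii) for any vertex $u \in V(T)$ at tree-depth $j < k_1$, every $D$-in-neighbor $w$ of $u$ satisfies $\dist(w,v) \le j+1 \le k_1$, so $w \in V(T)$ and is therefore the tree child of $u$ along the corresponding tree edge. Extending $\pi$ by zero off $V(D_0)$ is consistent since $D_0$ is a closed SCC. After $k_1$ iterations, the uniqueness of the tree-path from each depth-$k_1$ vertex to the root collapses the nested sum to
\[
\pi(v) = \sum_{u \in N_{k_1}^-(v)} \pi(u) \cdot r^{-k_1}.
\]

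To finish, bound each $\pi(u) \le \pi_{\max}$ and invoke Theorem~\ref{thm:pimax}, which gives $\pi_{\max} \le n^{-1+o_p(1)}$; combining with $|N_{k_1}^-(v)| \le \log^7 n$ and $r^{-k_1} \le n^{-(\eta_r - \eps'/2)}$ yields $\pi(v) \le n^{-(1+\eta_r) + \eps/2 + o_p(1)} \le n^{-(1+\eta_r-\eps)}$ for $n$ large, as needed. The main technical care lies in the iteration: one must verify that the induced tree property forces all in-neighbors of interior tree vertices to remain within $V(T)$ (preventing leakage of probability mass outside the tree), and that in-neighbors outside $V(D_0)$ contribute nothing under the zero extension. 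Once these are in hand, the bound follows by routine computation.
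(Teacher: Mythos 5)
Your proof is correct and follows essentially the same approach as the paper: lower bound via Lemma~\ref{lem:diam} plus the diameter upper bound, and upper bound by locating a flag $v$ and unrolling the stationarity identity through the tree $D[N^-_{\le k_1}(v)]$ to get $\pi(v) = \sum_{u \in N_{k_1}^-(v)}\pi(u)r^{-k_1}$, then bounding via $\pi_{\max}$ and $|N^-_{k_1}(v)|\le\log^7 n$. The only cosmetic differences are that the paper iterates to depth $k^*$ rather than the random $k_1$ (and phrases the identity directly in terms of $k^*$-step path counts), and you spell out explicitly why in-neighbours of interior tree vertices cannot leave $V(T)$ and why extending $\pi$ by zero off $V(D_0)$ is legitimate — both of which the paper leaves implicit.
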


\begin{proof}
Fix $\eps > 0$ small. It is a straightforward consequence of Theorem~\ref{thm:main} and Lemma~\ref{lem:diam} that $\pi_{\min}\geq n^{-(1+\eta_r+\eps)}$ with high probability.
It remains to show that $\pi_{\min}$ is small with high probability.

Let $k^* = (\eta_r - \eps/2) \log_r n$, and recall from Section~\ref{sec:lower} the definition of the set $F=F(\eps)$ of $\eps$-flags. In particular, if $v \in F(\eps)$ then
$D[N^-_{\le k^*}(v)]$ is a tree. It follows that if $v \in F(\eps)$ then $D[N^-_{\le k^*}(v)]$ is $k^*$-hard.

Let $A$ be the event that $\pi_{\max}>n^{-(1-\eps/6)}$. By Corollary~\ref{lem:many_towers}, $\Pr(F=\emptyset)=o(1)$, and by Theorem~\ref{thm:pimax}, $\Pr(\overline{A})=o(1)$. Therefore
\begin{align}\label{eq:final}
\Pr(\pi_{\min}> n^{-(1+\eta_r-\eps)}) 
& = \Pr(\pi_{\min}> n^{-(1+\eta_r-\eps)}, F\ne \emptyset, A)+o(1)\, .
\end{align}
Fix $v \in [n]$,
and for $u \in [n]$ let $K(u)$ be the number of paths of length $k^*$ from $u$ to $v$. Using the stationarity of $\pi$ we have
\[
\pi(v)= \sum_{u\in [n]} \frac{K(u)}{r^{k^*}} \cdot\pi(u)\, .
\]
If $v$ is a flag then $D[N^-_{\le k^*}(v)]$ is a tree,  $K(u)=0$ for $u \not \in N^-_{k^*}(v)$ and $K(u)=1$ for $u \in N^-_{k^*}(v)$.
In this case we also have  $|N^-_{k^*}(v)| \le \log^7 n$. Finally, on $A$ we have $\pi(u) \le n^{-(1-\eps/6)}$.
On the event $\{v \in F\} \cap A$, we thus obtain the bound
\begin{align*}
\pi(v) \leq  |N^-_{k^*}(v)| \cdot \frac{1}{r^{k^*}}\cdot  n^{-(1-\eps/6)}
\le \frac{\log^7{n}}{n^{1+\eta_r-2\eps/3}} \leq \frac{1}{n^{1+\eta_r-\eps}}\;.
\end{align*}
In other words, on $A$, every vertex $v \in F$ {\em deterministically} satisfies $\pi(v) \le n^{-(1+\eta_r-\eps)}$,
so in this case if $F$ is non-empty then $\pi_{\min} \le n^{-(1+\eta_r-\eps)}$.
It follows that the probability on the right of~\eqref{eq:final} is zero, so
$\Pr(\pi_{\min} > n^{-(1+\eta_r-\eps)}) = o(1)$, as required.
\end{proof}

\begin{proof}[Proof of Theorem~\ref{thm:dist}]
The theorem is now an immediate consequence of Theorems~\ref{thm:pimax} and~\ref{thm:pimin}.
\end{proof}

\section*{Acknowledgements}
The first author was supported by an NSERC discovery grant throughout this research. The first author also thanks the Newton Institute for their hospitality during the final stages of the research. The third author wants to thank Xing Shi Cai, Remco van der Hofstad and Bruce Reed for useful discussions. All three authors thank Dana Angluin and Dongqu Chen for useful discussions regarding their forthcoming work \cite{angluincomm}.

\bibliographystyle{plainnat}
\bibliography{paper,randomdfa}


\end{document}